\newcommand\norm[1]{\left\lVert#1\right\rVert}
\newtheorem{thm}{Theorem}[section]   
\newtheorem{Remark}[thm]{Remark}
\newtheorem{Concluding Remarks}[thm]{Concluding Remarks}   
\newtheorem{Lemma}[thm]{Lemma} 
\newtheorem{exampl}[thm]{Example}  
\newtheorem{Proposition}[thm]{Proposition}   
\newtheorem{Corollary}[thm]{Corollary}
\newtheorem{Definition}[thm]{Definition} 
\newcommand{\N} {{\mathbb N}}
\newtheorem{mytheorem}{Theorem}[subsection]
\newtheorem{myCorollary}[mytheorem]{Corollary}
\title{Lipschitz $\left(\mathfrak{m}^L\left(s;q\right),p\right)$ and $\left(p,\mathfrak{m}^L\left(s;q\right)\right)-$summing maps}
\date{\today}
\author{MANAF ADNAN SALAH \\ Mathematisches Institut, Universit\"at Jena \\ Ernst$-$Abbe$-$Platz 2, 07743 Jena, Germany\\ E$-$mails: manaf$-$adnan.salah@uni$-$jena.de \\ \ \ \:manaf$_{-}$math@yahoo.com}
\begin{document}
\maketitle

\begin{abstract}
Building upon the linear version of mixed summable sequences in arbitrary Banach spaces of A. Pietsch, we introduce a nonlinear version of his concept and study its properties. Extending previous work of J. D. Farmer, W. B. Johnson and J. A. Ch{\'a}vez-Dom{\'{\i}}nguez, we define Lipschitz $\left(\mathfrak{m}^L\left(s;q\right),p\right)$ and Lipschitz $\left(p,\mathfrak{m}^L\left(s;q\right)\right)-$summing maps and establish inclusion theorems, composition theorems and several characterizations. Furthermore, we prove that the classes of Lipschitz $\left(r,\mathfrak{m}^L\left(r;r\right)\right)-$summing maps with $0<r<1$ coincide. We obtain that every Lipschitz map is Lipschitz $\left(p,\mathfrak{m}^L\left(s;q\right)\right)-$summing map with $1\leq s< p$ and $0<q\leq s$ and discuss a sufficient condition for a Lipschitz composition formula as in the linear case of A. Pietsch. Moreover, we discuss a counterexample of the nonlinear composition formula, thus solving a problem by J. D. Farmer and W. B. Johnson.
\end{abstract}

\section{INTRODUCTION}
The starting point of the theory of linear operator ideals is the fundamental work of A. Grothendieck \cite{Gro96} together with the study of operator ideals in Hilbert spaces by I. Gohberg, M. G. Krejn \cite{GoKre69} and R. Schatten \cite{Sch60}. This work was done in the middle of the last century. 

The book \cite{P80} by A. Pietsch constitutes a culminating point in the development. Two things are established there. First, a general theory of operator ideals is presented. Second, a wealth of important examples of operator ideals are treated in detail. This lead to widespread applications not only in Banach space theory and operator theory, but also in harmonic analysis and approximation theory.

One particular important class of operator ideals is the class of $p-$summing operators considered already by A. Grothendieck in the cases $p=1$ and $p=2$ and generalized by A. Pietsch to $1\le p <\infty$. 

A. Pietsch established many of their fundamental properties. The celebrated Pietsch Domination and Factorization Theorem is proved
here. A beautiful consequence of the Pietsch Domination and Factorization Theorems is the composition theorem
for $p-$summing operators. 

A. Pietsch \cite{P80} and Maurey \cite{M74} have studied the characterizations of $(s;q)-$mixing operators. M{\'a}rio C. Matos \cite{M04},\cite{M07} first studied the concepts $(p,\mathfrak{m}(s;q))$ and $\left(\mathfrak{m}\left(s;q\right),p\right)-$summing mappings with $0<q\leq s\leq\infty$; $p\geq q$ and $p\leq q$, respectively.

J. D. Farmer and W. B. Johnson \cite{J09} have formally introduced the concept of a Lipschitz $p-$summing operator 
between metric spaces with $1\leq p<\infty$, although this notion already played a role in earlier work of J. Bourgain in \cite{B85}. 

The paper of J. Bourgain found applications in computer science, so the Lipschitz $p-$summing operators are expected
to play a similar important role for applications in the nonlinear case as the linear $p-$summing operators for
the linear theory.

J. D. Farmer and W. B. Johnson proved that this is a true extension of the linear concept and  obtained a nonlinear counterpart of important fundamental characterizations of $p-$summing linear operators.

This was done by showing a nonlinear version of the Pietsch Domination and Factorization Theorem. 
With this theorem they proved that the Lipschitz $p-$summing norm of a linear operator is the same as its $p-$summing norm. 

J. A. Ch{\'a}vez-Dom{\'{\i}}nguez \cite{JA12} introduced the nonlinear concept of Lipschitz $(s;q)-$mixing operators and proved several characterizations.

In the present paper, the corresponding concepts of Lipschitz mixed $(s;q)-$summable sequences, Lipschitz $\left(\mathfrak{m}^L\left(s;q\right),p\right)$
and Lipschitz $\left(p,\mathfrak{m}^L\left(s;q\right)\right)-$summing maps are defined and studied, respectively. 

We start by recalling the definitions of mixed summable sequences in arbitrary Banach spaces and various special cases of $\left(p,\mathfrak{m}\left(s;q\right)\right)$ and $\left(\mathfrak{m}\left(s;q\right),p\right)-$summing bounded linear operators in arbitrary Banach spaces. Then we present fundamental definitions and basic properties of Lipschitz mixed $(s;q)-$summable sequences, Lipschitz $\left(p,\mathfrak{m}^L\left(s;q\right)\right)$ and Lipschitz $\left(\mathfrak{m}^L\left(s;q\right),p\right)-$summing maps. Afterwards several characterizations and useful results such as inclusion theorems and composition theorems are established. 

Furthermore, we prove that the classes of Lipschitz $\left(r,\mathfrak{m}^L\left(r;r\right)\right)-$summing maps with $0<r<1$ coincide. We obtain that every Lipschitz map is Lipschitz $\left(p,\mathfrak{m}^L\left(s;q\right)\right)-$summing with $1\leq s< p$ and $0<q\leq s$ and discuss a sufficient condition for a Lipschitz composition formula as in the linear case of A. Pietsch \cite{P67}. 

Moreover, we discuss a counterexample of the nonlinear composition formula, thus solving a problem by J. D. Farmer and W. B. Johnson \cite[Problem 1]{J09}.

\section{NOTATIONS AND PRELIMINARIES}
We introduce concepts and notations that will be used in this article. The letters $X$, $Y$ and $Z$ will denote pointed metric spaces, i.e. each one has a special point designated by $x_{0}$, $y_{0}$ and $z_{0}$, respectively. The letters $E$, $F$ and $G$ will denote Banach spaces. The closed unit ball of a Banach space $E$ is denoted by $B_{E}$. The dual space of $E$ is $E^{*}$. The class of all bounded linear operators between arbitrary Banach spaces will be denoted by $\mathfrak{L}$. The symbols $\mathbb{R}$ and $\mathbb{N}$ stand for the set of all real numbers and the set of all natural numbers, respectively. For the Lipschitz mapping $T$ between metric spaces, $Lip(T)$ denotes its Lipschitz constant. 

Given metric spaces $X$ and $Y$, the set of all Lipschitz functions from $X$ into $Y$ that send the special point $x_{0}$ to $y_{0}$ will be denoted by $\mathbb{L}_{x_{0}}(X,Y)$ and the set of all Lipschitz functions from $X$ into $Y$ will be denoted by $\mathbb{L}(X,Y)$. For the special case $Y=\mathbb{R}$, the Banach space of real$-$valued Lipschitz functions defined on $X$ that send the special point $x_{0}$ to $0$ with the Lipschitz norm $Lip(\cdot)$ will be denoted by $X^{\#}$. The space $X^{\#}$ is called Lipschitz dual of $X$. The symbol $W(B_{X^{\#}})$ stands for the set of all Borel probability measures defined on $B_{X^{\#}}$.

In this paper, we write $\kappa=(\kappa_j)_{j\in\mathbb{N}}\subset\mathbb{R^{+}}$; $\eta=(\eta_j)_{j\in\mathbb{N}}\subset\mathbb{R}$; $\zeta=(\zeta_j)_{j\in\mathbb{N}}\subset\mathbb{R}$; $\alpha=(\alpha_j)_{j\in\mathbb{N}}\subset\mathbb{R}$; $\lambda=(\lambda_j)_{j\in\mathbb{N}}\subset\mathbb{R}\backslash\left\{0\right\}$; $\sigma=(\sigma_j)_{j\in\mathbb{N}}\subset\mathbb{R}\backslash\left\{0\right\}$ and $x'=(x'_j)_{j\in\mathbb{N}}$; $x''=(x''_j)_{j\in\mathbb{N}}$ are abbreviations for the corresponding sequences in $X$. 

In contrast to the situation in Banach spaces $E$, where it is enough to consider sequences $(x_{j})_{j\in\mathbb{N}}\subset E$, we need to consider sequences $\big((\sigma_j,x'_j,x''_j)\big)_{j\in\mathbb{N}}$ of triples
$(\sigma_j,x'_j,x''_j)\in\mathbb{R}\times X\times X$. To simplify notation we write $(\sigma,x',x'')=\big((\sigma_j,x'_j,x''_j)\big)_{j\in\mathbb{N}}\subset\mathbb{R}\times X\times X$, for such a sequence.

If $\tau=(\tau_j)_{j\in\mathbb{N}}\subset\mathbb{R}\backslash\left\{0\right\}$ is a scalar sequence, then we write $(\frac{\sigma}{\tau},x',x'')=\big((\frac{\sigma_j}{\tau_j},x'_j,x''_j)\big)_{j\in\mathbb{N}}$.\\
Let $0<p<\infty$. The $p-$sequence set, denoted by $\ell_p(\mathbb{N},\mathbb{R}\times X\times X)$ or $\ell_p(\mathbb{R}\times X\times X)$, is defined as $$\ell_p(\mathbb{R}\times X\times X)=\left\{(\sigma,x',x'')\subset\mathbb{R}\times X\times X:\sum\limits_{j=1}^{\rm\infty}\left|\sigma_j\right|^{p} d_X(x'_j,x''_j)^{p}<\infty\right\}.$$
We denote its strong $p-$norm by $$\left\|(\sigma,x',x'')\Big|\ell_p\right\|=\Bigg[\sum\limits_{j=1}^{\rm\infty}\left|\sigma_j\right|^{p} d_X(x'_j,x''_j)^{p}\Bigg]^{\frac{1}{p}}.$$ Also the weak Lipschitz $p-$sequence set, denoted $\ell_p^{L,w}(\mathbb{N},\mathbb{R}\times X\times X)$ or $\ell_p^{L,w}(\mathbb{R}\times X\times X)$, is defined as $$\ell_p^{L,w}(\mathbb{R}\times X\times X)=\left\{(\sigma,x',x'')\subset\mathbb{R}\times X\times X:\sup\limits_{f\in B_{{X}^{\#}}}\sum\limits_{j=1}^{\infty}\left|\sigma_j\right|^{p}\left|fx'_j-fx''_j\right|^{p}<\infty\right\}.$$
We denote its weak Lipschitz $p-$norm by $$\left\|(\sigma,x',x'')\Big|\ell_p^{L,w}\right\|=\sup\limits_{f\in B_{{X}^{\#}}}\Bigg[\sum\limits_{j=1}^{\infty}\left|\sigma_j\right|^{p}\left|fx'_j-fx''_j\right|^{p}\Bigg]^{\frac{1}{p}}.$$
In the case $p=\infty$, the $\infty -$sequence set, denoted by $\ell_\infty(\mathbb{N},\mathbb{R}\times X\times X)$ or $\ell_\infty(\mathbb{R}\times X\times X)$, is defined as $$\ell_\infty(\mathbb{R}\times X\times X)=\left\{(\sigma,x',x'')\subset\mathbb{R}\times X\times X:\left|\sigma_j\right| d_X(x'_j,x''_j)<\infty\right\}.$$
We denote its $\infty-$norm by $$\left\|(\sigma,x',x'')\Big|\ell_\infty\right\|=\sup\limits_{j\in\mathbb{N}}\left|\sigma_j\right| d_X(x'_j,x''_j).$$ 
Also we denote its weak Lipschitz $\infty-$norm by $$\left\|(\sigma,x',x'')\Big|\ell_\infty^{L,w}\right\|=\sup\limits_{f\in B_{{X}^{\#}}}\sup\limits_{j\in\mathbb{N}}\left|\sigma_j\right|\left|fx'_j-fx''_j\right|.$$ 
It is obvious that $$\left\|(\sigma,x',x'')\Big|\ell_\infty\right\|=\left\|(\sigma,x',x'')\Big|\ell_\infty^{L,w}\right\|.$$
The same notations are used for finite sequences of the same length.
 
 Observe that, since there is no linear structure on the set of triples $(\sigma,x',x'')$, the above notions are not really norms. But because of the similarity with the usual $\ell_{p}-$norm, we shall call them norms.
 
 Let $0<q\leq s\leq\infty$ and let the index $s'\left(q\right)$ is determined by the equation  $$\frac{1}{s'\left(q\right)}+\frac{1}{s}=\frac{1}{q}.$$
In this case we say that $s$ and $s'\left(q\right)$ are $q-$conjugate. We also denote $s'\left(1\right)$ by $s'$. In this case $s$ and $s'$ are conjugate in the usual sense. 
 
 Recall that, for $1\leq p<\infty$, a bounded linear operator $T$ from $E$ into $F$ is called $p-$summing if there is a nonnegative constant $C_1$ such that for all $m\in\mathbb{N}$ and any vectors $x_j\in E$, the inequality 
\begin{equation}\label{20}
\sum\limits_{j=1}^{m}\left\|Tx_j\right\|^p\leq C_{1}^{p}\cdot\sup\limits_{x^{*}\in B_{{E}^{*}}}\sum\limits_{j=1}^{m}\left|x^{*}(x_j)\right|^{p}
\end{equation} holds. In this case, the $p-$summing norm $\pi_{p}(T)$ of $T$ is the infimum of such constants $C_1$.
 
 Inspired by this useful concept, J. D. Farmer and W. B. Johnson \cite{J09} defined the Lipschitz $p-$\\summing norm $\pi_{p}^{L}(T)$ of a (not necessarily linear) mapping $T$ from $X$ into $Y$ as the infimum of all nonnegative constants $C_2$ such that for all $m\in\mathbb{N}$, any sequences $x'$, $x''$ in $X$ and $\kappa$ in $R^{+}$, the inequality 
$$\left\|(\kappa,Tx',Tx'')\Big|\ell_p\right\|\leq C_{2}\cdot\left\|(\kappa,x',x'')\Big|\ell_p^{L,w}\right\|$$ holds. This definition remains unchanged if we consider only the case $\kappa_j=1$, a very useful observation in \cite{J09} also credited to M. Mendel and G. Schechtman. The set of all Lipschitz $p-$summing maps from $X$ to $Y$ is denoted by $\Pi_{p}^{L}(X,Y)$.
 
 Recall that the definition of mixed summable sequences in arbitrary Banach spaces of A. Pietsch \cite{P80} is as follows. Let $0<q\leq s\leq\infty$ and let the index $s'\left(q\right)$ is determined by the equation  $$\frac{1}{s'\left(q\right)}+\frac{1}{s}=\frac{1}{q}.$$ A sequence $x\subset E$, is called mixed $(s;q)-$summable, if there exists a sequence $\zeta\in\ell_{s'\left(q\right)}$ and a sequence $x^{0}\in\ell_s^{w}(E)$ such that $x_j=\zeta_{j}\cdot x^{0}_{j}$, $\forall\ j\in\mathbb{N}$.

We denote by $\ell_{(s;q)}^{m}(E)$ the vector space of all mixed $(s;q)-$summable sequences of elements of $E$. For $x\in\ell_{(s;q)}^{m}(E)$ we set 
\begin{equation}\label{zero}
\left\|x\Big|\ell_{(s;q)}^{m}(E)\right\|=\inf\left\|\zeta\Big|\ell_{s'\left(q\right)}\right\|\left\|x^{0}\Big|\ell_s^{w}(E)\right\|,
\end{equation}
where the infimum is taken over all possible factorizations.\\
On $\ell_{(s;q)}^{m}(E)$, $\left\|(\cdot)\Big|\ell_{(s;q)}^{m}(E)\right\|$ defined by (\ref{zero}) is a norm for $q\geq 1$ and a $q-$norm for $0<q<1$. \\
We abbreviate $\left\|x\Big|\ell_{(s;s)}^{m}(E)\right\|=\left\|x\Big|\ell_s^{w}(E)\right\|$, for every sequence $x\subset E$.

 For $0<q\leq s\leq\infty$ and $p\geq q$, M{\'a}rio C. Matos \cite{M04} defined the $\left(p,\mathfrak{m}\left(s;q\right)\right)-$summing norm $\left\|T\right\|_{\left(p,\mathfrak{m}\left(s;q\right)\right)}$ of a bounded linear mapping $T$ from $E$ into $F$ as the infimum of all nonnegative constants $C_3$ such that for all $m\in\mathbb{N}$ and any vectors $x_j\in E$ the inequality 
\begin{equation}\label{q1}\sum\limits_{j=1}^{m}\left\|Tx_j\right\|^p\leq C_{3}^{p}\cdot \left\|(x_j)_{j}\right\|_{\mathfrak{m}\left(s;q\right)}^{p},
\end{equation}
holds. The vector space of all $\left(p,\mathfrak{m}\left(s;q\right)\right)-$summing bounded linear mappings from $E$ into $F$ is denoted by $\mathfrak{L}_{\left(p,\mathfrak{m}\left(s;q\right)\right)}(E,F)$.

 The following special cases of $\left(p,\mathfrak{m}\left(s;q\right)\right)-$summing bounded linear maps were already considered by A. Pietsch \cite{P80}. If $s=q$, then the $\left(p,\mathfrak{m}\left(s;s\right)\right)-$summing norm $\left\|T\right\|_{\left(p,\mathfrak{m}\left(s;s\right)\right)}$ of the mapping $T$ is the infimum of all nonnegative constants $C_4$ such that for all $m\in\mathbb{N}$ and any vectors $x_j\in E$ the inequality 
$$\left[\sum\limits_{j=1}^{m}\left\|Tx_j\right\|^p\right]^{\frac{1}{p}}\leq C_{4}\cdot\sup\limits_{x^{*}\in B_{{E}^{*}}}\left[\sum\limits_{j=1}^{m}\left|x^{*}(x_j)\right|^{s}\right]^{\frac{1}{s}}$$
holds. This is the usual $(p,s)-$summing norm of $T$.

If $s=q=p$, then the $\left(p,\mathfrak{m}\left(p;p\right)\right)-$summing norm $\left\|T\right\|_{\left(p,\mathfrak{m}\left(p;p\right)\right)}$ of the mapping $T$ is the infimum of all nonnegative constants $C_5$ such that for all $m\in\mathbb{N}$ and any vectors $x_j\in E$ the inequality (\ref{20}) holds. So we obtain the usual $p-$summing norm of $T$.

For $0<q\leq s\leq\infty$ and $p\leq q$, M{\'a}rio C. Matos \cite{M07} also defined the $\left(\mathfrak{m}\left(s;q\right),p\right)-$summing norm $\left\|T\right\|_{\left(\mathfrak{m}\left(s;q\right),p\right)}$of a bounded linear mapping $T$ from $E$ into $F$ as the infimum of all nonnegative constants $C_6$ such that for all $m\in\mathbb{N}$ and any vectors $x_j\in E$  the inequality
\begin{equation}\label{q2} 
\left\|(Tx_j)_j\right\|^{p}_{\mathfrak{m}\left(s;q\right)}\leq C_{6}^{p}\cdot\sup\limits_{x^{*}\in B_{{E}^{*}}}\sum\limits_{j=1}^{m}\left|x^{*}(x_j)\right|^{p}
\end{equation} holds. The vector space of all $\left(\mathfrak{m}\left(s;q\right),p\right)-$summing bounded linear mappings from $E$ into $F$ is denoted by $\mathfrak{L}_{\left(\mathfrak{m}\left(s;q\right),p\right)}(E,F)$. 

The following special case of $\left(\mathfrak{m}\left(s;q\right),p\right)-$summing bounded linear maps were already considered by A. Pietsch \cite{P80}. If $p=q$, then the $\left(\mathfrak{m}\left(s;p\right),p\right)-$summing norm $\left\|T\right\|_{\left(\mathfrak{m}\left(s;p\right),p\right)}$ of the mapping $T$ is the infimum of all nonnegative constants $C_7$ such that for all $m\in\mathbb{N}$ and any vectors $x_j\in E$ the inequality $$\left\|(Tx_j)_j\right\|^{p}_{\mathfrak{m}\left(s;p\right)}\leq C_{7}^{p}\cdot\sup\limits_{x^{*}\in B_{{E}^{*}}}\sum\limits_{j=1}^{m}\left|x^{*}(x_j)\right|^{p}$$ holds.

\begin{Remark}
M{\'a}rio C. Matos \cite{M07} proved if a map $T$ from $E$ into $F$ satisfying the inequality (\ref{q1}) with $p<q$ and the inequality (\ref{q2}) with $p>q$, respectively, then in both cases $T$ is a zero map.
\end{Remark}

A. Pietsch \cite[Chap. 21]{P80} defined the ideal of operators possessing $(s,p)-$type for $0<p<s\leq 2$. The theory of these operators was created by B. Maurey \cite{M72}. 

For every finite sequence $x\subset E$, we put 
\begin{equation}\label{type2}
t_{(s,p)}\left(x_k\right)=c_{sp}^{-1}\cdot\Bigg(\int\limits_{\mathbb{R}^{n}}\left\|\sum\limits_{k=1}^{n}t_{k}\cdot x_{k}\right\|^{p} d\mu_{s}^{n}(t)\Bigg)^{\frac{1}{p}}.
\end{equation}
Here  $t=(t_{1},\cdots,t_{n})\in\mathbb{R}^{n}$ and $\mu_{s}^{n}$ denotes the $n-$fold product of $s-$stable laws $\mu_{s}$ were invented by P. L{\'e}vy \cite{P23}.

An operator $S\in\mathfrak{L}(E,F)$ is said to be of $(s,p)-$type if there exists a constant $\varrho\geq 0$ such that
\begin{equation}\label{type3}
t_{(s,p)}\left(Sx\right)\leq\varrho\left\|x|\ell_s(E)\right\|
\end{equation} 
for arbitrary finite sequence $x$ in $E$; $k=1,\cdots,n$ and $n\in\mathbb{N}$. We put $\mathbf{T}_{(s,p)}(S)=\inf\varrho$. 

 The class of these operators is denoted by $\mathfrak{T}_{(s,p)}$. For further reference, we recall the following theorem.
\begin{thm}\cite[Sec. 21]{P80} \label{type4}
If $0<p<s<1$. Then $\mathfrak{T}_{(s,p)}=\mathfrak{L}$.
\end{thm}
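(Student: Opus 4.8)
The plan is to establish the nontrivial inclusion $\mathfrak{L}\subseteq\mathfrak{T}_{(s,p)}$; the reverse inclusion is automatic, since by definition every member of $\mathfrak{T}_{(s,p)}$ is a bounded operator. Fix $S\in\mathfrak{L}(E,F)$ and a finite sequence $x$ in $E$. First I would note that the functional in (\ref{type2}) is ``$\|S\|$-homogeneous'': inserting $\|S(\sum_k t_k x_k)\|\le\|S\|\,\|\sum_k t_k x_k\|$ under the integral gives $t_{(s,p)}(Sx)\le\|S\|\,t_{(s,p)}(x)$, so it suffices to bound $t_{(s,p)}(x)$ by a constant multiple of $\|x|\ell_s(E)\|$, the constant depending only on $s$ and $p$. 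Second, because $\|\cdot\|$ is a genuine norm, the triangle inequality yields $\|\sum_k t_k x_k\|\le\sum_k|t_k|\,\|x_k\|$ pointwise in $t$, so with $b_k:=\|x_k\|$ the whole problem collapses to the scalar inequality
\begin{equation*}
\int_{\mathbb{R}^n}\Big(\sum_{k=1}^{n}b_k\,|t_k|\Big)^{p}\,d\mu_s^{n}(t)\ \le\ C_{s,p}\,\Big(\sum_{k=1}^{n}b_k^{\,s}\Big)^{p/s},
\end{equation*}
required uniformly in $n\in\mathbb{N}$ and in the nonnegative weights $b_1,\dots,b_n$. Granting this, and using the normalisation $c_{sp}=\big(\int_{\mathbb{R}}|t|^p\,d\mu_s(t)\big)^{1/p}$ (finite precisely because $p<s$), one obtains $t_{(s,p)}(x)\le c_{sp}^{-1}C_{s,p}^{1/p}\,\|x|\ell_s(E)\|$, hence $\mathbf{T}_{(s,p)}(S)\le c_{sp}^{-1}C_{s,p}^{1/p}\,\|S\|<\infty$, i.e. $S\in\mathfrak{T}_{(s,p)}$.

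For the scalar inequality I would run a ``weak-type estimate, then layer-cake'' argument, and this is the only place where $s<1$ is genuinely needed (the weaker hypothesis $p<s$ is used throughout just to keep the relevant integrals finite). The first ingredient is an elementary lemma on heavy-tailed independent sums: if $0<s<1$ and $Z_1,\dots,Z_n\ge0$ are independent with $\mathbb{P}(Z_k>x)\le a_k x^{-s}$ for all $x>0$, then $\mathbb{P}\big(\sum_k Z_k>x\big)\le\frac{2-s}{1-s}\big(\sum_k a_k\big)x^{-s}$ for all $x>0$. I would prove it by truncating each $Z_k$ at level $x$: bound $\mathbb{P}(\sum_k Z_k>x)$ by $\sum_k\mathbb{P}(Z_k>x)+x^{-1}\sum_k\mathbb{E}[Z_k\wedge x]$ and use $\mathbb{E}[Z_k\wedge x]=\int_0^x\mathbb{P}(Z_k>t)\,dt\le a_k x^{1-s}/(1-s)$ --- the step that collapses for $s\ge1$, where the truncated mean is governed by the finite mean of $Z_k$ rather than by $x^{1-s}$ and the conclusion fails. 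Applying the lemma with $Z_k=b_k|t_k|$ and $a_k=A\,b_k^{\,s}$, where $A:=\sup_{y>0}y^{s}\,\mu_s(\{|t|>y\})<\infty$ is the weak-$\ell_s$ constant of the stable law $\mu_s$ (a standard tail property, see \cite[Chap.~21]{P80}), gives $\mu_s^{n}\big(\{\sum_k b_k|t_k|>x\}\big)\le\frac{(2-s)A}{1-s}\big(\sum_k b_k^{\,s}\big)x^{-s}$. The second ingredient is the layer-cake identity: writing $B:=\frac{(2-s)A}{1-s}\sum_k b_k^{\,s}$,
\begin{equation*}
\int_{\mathbb{R}^n}\Big(\sum_k b_k|t_k|\Big)^{p}\,d\mu_s^{n}(t)=\int_0^{\infty}\mu_s^{n}\Big(\Big\{\sum_k b_k|t_k|>u^{1/p}\Big\}\Big)\,du\ \le\ \int_0^{\infty}\min\big(1,\,B\,u^{-s/p}\big)\,du=\frac{s}{s-p}\,B^{p/s},
\end{equation*}
the last integral converging exactly because $s/p>1$. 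This yields the scalar inequality with $C_{s,p}=\frac{s}{s-p}\big(\frac{(2-s)A}{1-s}\big)^{p/s}$, completing the argument.

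The real obstacle, as I see it, is the heavy-tailed weak-type lemma: the naive bound $\mu_s^{n}(\{\sum_k b_k|t_k|>x\})\le\sum_k\mu_s(\{b_k|t_k|>x/n\})$ loses a factor $n^{s}$ and is far too lossy, so one must genuinely separate ``one large summand'' from ``a truncated remainder'' to recover the correct dependence on $\sum_k b_k^{\,s}$. A minor secondary point is to confirm that $\mu_s$ has the tail $\mu_s(\{|t|>y\})\asymp y^{-s}$ with a constant $A$ uniform over all $y>0$; this is immediate since $y\mapsto y^{s}\mu_s(\{|t|>y\})$ is continuous and has finite limits as $y\to\infty$ and as $y\to0$. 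Everything else --- the reduction to the identity, the triangle-inequality collapse, and the layer-cake computation --- is routine. Moreover the explicit bound $\mathbf{T}_{(s,p)}(S)\le c_{sp}^{-1}C_{s,p}^{1/p}\|S\|$ shows that the inclusion $\mathfrak{L}\subseteq\mathfrak{T}_{(s,p)}$ is in fact norm-bounded; together with $\mathfrak{T}_{(s,p)}\subseteq\mathfrak{L}$ this gives $\mathfrak{T}_{(s,p)}=\mathfrak{L}$.
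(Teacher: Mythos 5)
The paper never proves this statement: Theorem \ref{type4} is quoted directly from Pietsch \cite[Sec.~21]{P80}, so there is no internal proof to compare against. Your argument is a correct, self-contained proof of the cited fact, and it follows the classical route: after reducing by linearity of $S$ and the triangle inequality to the scalar estimate $\int_{\mathbb{R}^n}\big(\sum_k b_k|t_k|\big)^p\,d\mu_s^n(t)\le C_{s,p}\big(\sum_k b_k^s\big)^{p/s}$, you obtain it from the weak-$\ell_s$ tail of the $s$-stable law, a truncation (weak-type) bound for sums of nonnegative heavy-tailed variables (which is where $s<1$ enters), and a layer-cake integration (convergent exactly because $p<s$). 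I checked the three steps: the union-bound-plus-Markov argument behind your lemma is sound (independence is not even needed there), the constant $A=\sup_{y>0}y^{s}\mu_s(\{|t|>y\})$ is finite by the standard L\'evy tail asymptotics, and the layer-cake computation does give $\tfrac{s}{s-p}B^{p/s}$; the reverse inclusion $\mathfrak{T}_{(s,p)}\subseteq\mathfrak{L}$ holds by definition, and your constant is uniform in $n$, as the definition of $\mathbf{T}_{(s,p)}$ requires. This is essentially the argument underlying the result in Pietsch/Maurey (every Banach space has stable type $s$ for $s<1$, at the level of operators); an equivalent standard variant replaces your weak-type lemma by stochastic domination of $|t_k|$ by a positive one-sided $s$-stable variable $\theta_k$, whose strict stability makes $\sum_k b_k\theta_k$ equidistributed with $(\sum_k b_k^s)^{1/s}\theta_1$, but that buys nothing over your truncation argument beyond a slightly cleaner constant.
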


The absolute moments $$c_{sp}=\left(\int\limits_{\mathbb{R}}\left|l\right|^{p}d\mu_{s}(l)\right)^{\frac{1}{p}}=2\cdot\Bigg[\frac{\Gamma\left(\frac{s-p}{s}\right)\cdot\Gamma\left(\frac{1+p}{2}\right)}{\Gamma\left(\frac{2-p}{2}\right)\cdot\Gamma\left(\frac{1}{2}\right)}\Bigg]^{\frac{1}{p}}$$ exist for $0<p<s<2$. We also have 
 
\begin{equation}\label{type1}
\Bigg(\int\limits_{\mathbb{R}^{n}}\left|\sum\limits_{k=1}^{n}t_{k}\cdot\xi_{k}\right|^{p} d\mu_{s}^{n}(t)\Bigg)^{\frac{1}{p}}=c_{sp}\cdot\Bigg[\sum\limits_{k=1}^{n}\left|\xi_{k}\right|^{s}\Bigg]^{\frac{1}{s}}
\end{equation}
for $\xi_{1},\cdots,\xi_{n}\in\mathbb{R}$ and $n\in\mathbb{N}$.

\section{DEFINITIONS AND ELEMENTARY PROPERTIES}
In this section, the concepts of Lipschitz mixed $(s;q)-$summable sequences, Lipschitz $\left(\mathfrak{m}^L\left(s;q\right),p\right)$ and Lipschitz $\left(p,\mathfrak{m}^L\left(s;q\right)\right)-$summing maps are defined and studied. Several properties, characterizations and remarks relevant of them help us to establish further results in the next sections. 

Inspired by the definition of mixed summable sequences in arbitrary Banach spaces of A. Pietsch \cite{P80}, we present the following definition.  
\begin{Definition}\label{car}
Let $0<q\leq s\leq\infty$ and let the index $s'\left(q\right)$ is determined by the equation  $$\frac{1}{s'\left(q\right)}+\frac{1}{s}=\frac{1}{q}.$$ A sequence $(\sigma,x',x'')\subset\mathbb{R}\times X\times X$ is called Lipschitz mixed $(s;q)-$summable, if there exists a sequence $\tau\in \ell_{s'\left(q\right)}$ such that $(\frac{\sigma}{\tau},x',x'')\in\ell_s^{L,w}(\mathbb{R}\times X\times X)$.

The class of all Lipschitz mixed $(s;q)-$summable sequences is denoted by $\mathfrak{M}_{(s;q)}^{L}(\mathbb{R}\times X\times X)$. Moreover, for a sequence $(\sigma,x',x'')\in\mathfrak{M}_{(s;q)}^{L}(\mathbb{R}\times X\times X)$ define 
\begin{equation}\label{three}
\mathfrak{m}_{(s;q)}^{L}(\sigma,x',x'')=\inf\left\|\tau\Big|\ell_{s'\left(q\right)}\right\|\left\|(\frac{\sigma}{\tau},x',x'')\Big|\ell_s^{L,w}\right\|
\end{equation}
where the infimum is taken over all sequences $\tau\in\ell_{s'\left(q\right)}$.     
\end{Definition}

\begin{Definition}
Let $0<q<\infty$. We denote by $\mathfrak{M}_{(q;q)}^{L,0}(\mathbb{R}\times X\times X)$ the class of all sequence $(\sigma,x',x'')\subset\mathbb{R}\times X\times X$ such that there exists a sequence $\tau\in c_{0}$ with $(\frac{\sigma}{\tau},x',x'')\in\ell_q^{L,w}(\mathbb{R}\times X\times X)$. Moreover, for a sequence $(\sigma,x',x'')\in\mathfrak{M}_{(q;q)}^{L,0}(\mathbb{R}\times X\times X)$ define 
$$\mathfrak{m}_{(q;q)}^{L,0}(\sigma,x',x'')=\inf\left\|\tau\Big|\ell_{\infty}\right\|\left\|(\frac{\sigma}{\tau},x',x'')\Big|\ell_q^{L,w}\right\|$$
where the infimum is taken over all sequences $\tau\in c_{0}$.     
\end{Definition}

 The proof of the next proposition is similar to \cite [Proposition 4.2]{JA12} and is therefore omitted.  

\begin{Proposition} \label{two} 
Let $0<q<s<\infty$. A sequence $(\sigma,x',x'')$ is Lipschitz mixed $(s;q)-$summable if and only if $$\Bigg[\sum\limits_{j=1}^{\infty}\bigg[\int\limits_{B_{X^{\#}}}\left|\sigma_j\right|^{s}\left|f(x'_j)-f(x''_j)\right|^{s}d\mu(f)\bigg]^\frac{q}{s}\Bigg]^{\frac{1}{q}}<\infty,$$ for every $\mu\in W(B_{X^{\#}})$. In this case$$\sup\limits_{\mu\in W(B_{X^{\#}})}\Bigg[\sum\limits_{j=1}^{\infty}\bigg[\int\limits_{B_{X^{\#}}}\left|\sigma_j\right|^{s}\left|f(x'_j)-f(x''_j)\right|^{s}d\mu(f)\bigg]^\frac{q}{s}\Bigg]^{\frac{1}{q}}=\mathfrak{m}_{(s;q)}^{L}(\sigma,x',x'').$$
\end{Proposition}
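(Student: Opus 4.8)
\emph{Plan.} I would follow the blueprint of Pietsch's characterisation of mixed $(s;q)$-summable sequences in Banach spaces, with the weak$^{*}$-compact set $B_{X^{\#}}$ playing the role of $B_{E^{*}}$, exactly as in \cite[Proposition 4.2]{JA12}. Throughout write $g_j(f)=\left|\sigma_j\right|^{s}\left|f(x'_j)-f(x''_j)\right|^{s}$ for $f\in B_{X^{\#}}$; each $g_j$ is weak$^{*}$-continuous on $B_{X^{\#}}$ (evaluation at a point of $X$ is weak$^{*}$-continuous on $X^{\#}$), one has $\left\|(\tfrac{\sigma}{\tau},x',x'')\big|\ell_s^{L,w}\right\|^{s}=\sup_{f\in B_{X^{\#}}}\sum_j g_j(f)\left|\tau_j\right|^{-s}$ for every admissible weight $\tau$, and $\sup_{f\in B_{X^{\#}}}\left|f(x'_j)-f(x''_j)\right|=d_X(x'_j,x''_j)$.

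\emph{Easy implication.} Assume $\tau\in\ell_{s'(q)}$ witnesses Lipschitz mixed $(s;q)$-summability and fix $\mu\in W(B_{X^{\#}})$; put $M=\left\|(\tfrac{\sigma}{\tau},x',x'')\big|\ell_s^{L,w}\right\|$. Since $\sum_j g_j(f)\left|\tau_j\right|^{-s}\le M^{s}$ for every $f$, integrating against $\mu$ gives $\sum_j\left|\tau_j\right|^{-s}\int g_j\,d\mu\le M^{s}$. Writing $\int g_j\,d\mu=\left|\tau_j\right|^{q}\cdot\left|\tau_j\right|^{-q}\int g_j\,d\mu$ and applying H\"older's inequality with the conjugate exponents $s'(q)/q$ and $s/q$ (conjugate since $q/s'(q)+q/s=1$) yields
\[\left[\sum_j\Big(\int g_j\,d\mu\Big)^{q/s}\right]^{1/q}\le\left\|\tau\big|\ell_{s'(q)}\right\|\cdot M .\]
Taking the infimum over all factorisations shows the displayed quantity is finite and at most $\mathfrak{m}_{(s;q)}^{L}(\sigma,x',x'')$.

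\emph{Hard implication.} Suppose $K:=\sup_{\mu\in W(B_{X^{\#}})}\left[\sum_j(\int g_j\,d\mu)^{q/s}\right]^{1/q}<\infty$. I would first solve the finite problem: for each $m$, produce $\tau^{(m)}_1,\dots,\tau^{(m)}_m>0$ with $\sum_{j\le m}(\tau^{(m)}_j)^{s'(q)}=1$ and $\sup_f\sum_{j\le m}g_j(f)(\tau^{(m)}_j)^{-s}\le K^{s}$. Parametrising by $\lambda_j=(\tau_j)^{s'(q)}$ in the simplex $\Delta_m$ (so $(\tau_j)^{-s}=\lambda_j^{-\beta}$ with $\beta=(s-q)/q>0$, using $s/s'(q)=\beta$), this is the assertion $\inf_{\lambda\in\Delta_m}\sup_{f}\sum_{j\le m}g_j(f)\lambda_j^{-\beta}\le K^{s}$. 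Introduce $\Phi(\lambda,\nu)=\sum_{j\le m}\lambda_j^{-\beta}\int g_j\,d\nu$ on $\Delta_m\times W(B_{X^{\#}})$: it is convex and lower semicontinuous in $\lambda$ (the exponent $-\beta$ is negative), affine and upper semicontinuous in $\nu$, $\Delta_m$ is compact convex and $W(B_{X^{\#}})$ is convex and weak$^{*}$-compact, so Ky Fan's minimax theorem gives $\inf_\lambda\sup_\nu\Phi=\sup_\nu\inf_\lambda\Phi$. For fixed $\nu$, minimising $\sum_j c_j\lambda_j^{-\beta}$ over $\Delta_m$ with $c_j=\int g_j\,d\nu$, by Lagrange multipliers ($\lambda_j\propto c_j^{q/s}$, using $\beta+1=s/q$), gives $\inf_\lambda\Phi=\big(\sum_j c_j^{q/s}\big)^{s/q}\le K^{s}$; since $\sup_\nu\Phi(\lambda,\nu)=\sup_f\sum_{j\le m}g_j(f)\lambda_j^{-\beta}$ (supremum of an integral of a continuous function over probability measures), we conclude $\inf_\lambda\sup_f\sum_{j\le m}g_j(f)\lambda_j^{-\beta}\le K^{s}$, i.e. the required $\tau^{(m)}$ exists. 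Finally I would glue the $\tau^{(m)}$: the bound $g_j(f)(\tau^{(m)}_j)^{-s}\le K^{s}$ forces $\tau^{(m)}_j\ge\left|\sigma_j\right|d_X(x'_j,x''_j)/K$, so for each $j$ the numbers $\tau^{(m)}_j$ ($m\ge j$) lie in a fixed compact subinterval of $(0,1]$ (when $x'_j\ne x''_j$; the finitely trivial indices are harmless), a diagonal subsequence converges to some $\tau_j>0$, and both $\sum_j\tau_j^{s'(q)}\le 1$ and $\sup_f\sum_j g_j(f)\tau_j^{-s}\le K^{s}$ pass to the limit coordinatewise. Hence $(\sigma,x',x'')\in\mathfrak{M}_{(s;q)}^{L}(\mathbb{R}\times X\times X)$ with $\mathfrak{m}_{(s;q)}^{L}(\sigma,x',x'')\le K$, and combined with the easy implication we get equality, proving also the stated equivalence.

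\emph{Main obstacle.} The substance lies entirely in the hard implication: choosing the form $\Phi$ so that the hypothesis and the conclusion appear on the two sides of Ky Fan's minimax theorem, and checking that the finite-stage optimal weights stay uniformly bounded away from $0$ and from $\infty$ so that they assemble into an admissible infinite weight $\tau$. The H\"older estimate and the Lagrange computation are routine.
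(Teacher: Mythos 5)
Your proof is correct and follows essentially the same route as the paper, which omits the argument and refers to \cite[Proposition 4.2]{JA12}: H\"older's inequality with the conjugate exponents $s'(q)/q$ and $s/q$ for the easy direction, and for the converse the classical Pietsch-type minimax over the simplex of weights $\lambda_j=\tau_j^{s'(q)}$ against $W(B_{X^{\#}})$, the Lagrange computation giving $\big(\sum_j c_j^{q/s}\big)^{s/q}$, and the coordinatewise compactness/diagonal gluing. One small step worth adding: your converse starts from $K=\sup_{\mu}<\infty$, whereas the statement assumes only finiteness for each $\mu$ separately; this is bridged in one line, since if the quantity were at least $2^{2n/s}$ along measures $\mu_n$, then for $\mu=\sum_{n}2^{-n}\mu_n$ one has $\int g_j\,d\mu\ge 2^{-n}\int g_j\,d\mu_n$, so the quantity at $\mu$ exceeds $2^{n/s}$ for every $n$, contradicting the assumed finiteness at $\mu$.
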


\begin{Definition}\label{seven}
Let $0<q\leq s\leq\infty$ and $p\leq q$. A Lipschitz map $T$ from $X$ into $Y$ is called Lipschitz $\left(\mathfrak{m}^L\left(s;q\right),p\right)-$summing if there is a constant $C_{8}\geq 0$ such that
\begin{equation}\label{flat1}
\mathfrak{m}_{(s;q)}^{L}(\sigma,Tx',Tx'')\leq C_{8}\cdot\left\|(\sigma,x',x'')\Big|\ell_p^{L,w}\right\|
\end{equation}  
for arbitrary finite sequences $x'$, $x''$ in $X$ and $\sigma$ in $\mathbb{R}$. 

Let us denote by $\Pi_{\left(\mathfrak{m}^L\left(s;q\right),p\right)}^{L}(X,Y)$ the class of all Lipschitz $\left(\mathfrak{m}^L\left(s;q\right),p\right)-$summing maps from $X$ into $Y$ with $$\pi_{\left(\mathfrak{m}^L\left(s;q\right),p\right)}^{L}(T)=\inf C_{8}.$$
\end{Definition}

\begin{Remark}
\begin{enumerate}
\item An equivalent definition of Lipschitz $\left(\mathfrak{m}^L\left(s;q\right),p\right)-$summing map is as follows. A Lipschitz map $T$ from $X$ into $Y$ is  Lipschitz $\left(\mathfrak{m}^L\left(s;q\right),p\right)-$summing, if every Lipschitz weakly $p-$summable sequence is mapped to a Lipschitz mixed $(s;q)-$summable sequence.
\item The linear space $\Pi_{\left(\mathfrak{m}^L\left(s;q\right),p\right)}^{L}(X,E)$ equipped with the norm $\pi_{\left(\mathfrak{m}^L\left(s;q\right),p\right)}^{L}(\cdot)$ is a Banach space if $q\geq 1$ and a complete $q-$normed space if $0<q<1$.
\end{enumerate}
\end{Remark}

\begin{Definition}\label{eight}
Let $0<q\leq s\leq\infty$ and $p\geq q$. A Lipschitz map $T$ from $X$ into $Y$ is called Lipschitz $\left(p,\mathfrak{m}^L\left(s;q\right)\right)-$summing if there is a constant $C_{9}\geq 0$  such that 
\begin{equation}\label{flat2}
\left\|(\sigma,Tx',Tx'')\Big|\ell_p\right\|\leq C_{9}\cdot\mathfrak{m}_{(s;q)}^{L}(\sigma,x',x'')
\end{equation}
for arbitrary finite sequences $x'$, $x''$ in $X$ and $\sigma$ in $\mathbb{R}$. 

Let us denote by $\Pi_{\left(p,\mathfrak{m}^L\left(s;q\right)\right)}^{L}(X,Y)$ the class of all Lipschitz $\left(p,\mathfrak{m}^L\left(s;q\right)\right)-$summing maps from $X$ into $Y$ with $$\pi_{\left(p,\mathfrak{m}^L\left(s;q\right)\right)}^{L}(T)=\inf C_{9}.$$
\end{Definition}

\begin{Remark}
\begin{enumerate}
\item An equivalent definition of Lipschitz $\left(p,\mathfrak{m}^L\left(s;q\right)\right)-$summing map is as follows. A Lipschitz map $T$ from $X$ into $Y$ is Lipschitz $\left(p,\mathfrak{m}^L\left(s;q\right)\right)-$summing, if every Lipschitz mixed $(s;q)-$summable sequence is mapped to a strong $p-$summable sequence. 
\item The linear space $\Pi_{\left(p,\mathfrak{m}^L\left(s;q\right)\right)}^{L}(X,E)$ equipped with the norm $\pi_{\left(p,\mathfrak{m}^L\left(s;q\right)\right)}^{L}(\cdot)$ is a Banach space if $p\geq 1$ and a complete $p-$normed space if $0<p<1$.
\end{enumerate}
\end{Remark}

\begin{Concluding Remarks}\label{66}
\begin{enumerate}
\item 
If we consider a Lipschitz map $T$ from $X$ into $Y$ satisfying the inequality (\ref{flat1}) with $p>q$ and the inequality (\ref{flat2}) with $p<q$, respectively, then in both cases $T$ is a constant map, i.e. $T(x)=y_{0}$ for every $x\in X$. Hence the class of Lipschitz $\left(\mathfrak{m}^L\left(s;q\right),p\right)$\-summing maps is only interesting for $p\leq q$ and the class of Lipschitz $\left(p,\mathfrak{m}^L\left(s;q\right)\right)-$summing maps is only interesting for $p\geq q$. 
\item
Let $(\sigma,x',x'')\in\mathfrak{M}_{(s;q)}^{L}(\mathbb{R}\times X\times X)$.
\begin{itemize}
\item If $q=s$, then $\mathfrak{M}_{(q;q)}^{L}(\mathbb{R}\times X\times X)=\ell_q^{L,w}(\mathbb{R}\times X\times X)$ with $\mathfrak{m}_{(q;q)}^{L}(\sigma,x',x'')=\left\|(\sigma,x',x'')\Big|\ell_q^{L,w}\right\|$.	
\item If $s=\infty$, then $\mathfrak{M}_{(\infty;q)}^{L}(\mathbb{R}\times X\times X)=\ell_q(\mathbb{R}\times X\times X)$ with $\mathfrak{m}_{(\infty;q)}^{L}(\sigma,x',x'')=\left\|(\sigma,x',x'')\Big|\ell_q\right\|$.
\end{itemize}	
\item It is obvious that the Lipschitz $\left(\mathfrak{m}^L\left(s;q\right),p\right)-$summing maps satisfy the ideal property, i.e. $$\pi_{\left(\mathfrak{m}^L\left(s;q\right),p\right)}^{L}(S\circ T\circ R)\leq Lip(S)\cdot \pi_{\left(\mathfrak{m}^L\left(s;q\right),p\right)}^{L}(T)\cdot Lip(R)$$ whenever the composition makes sense and also the Lipschitz $\left(p,\mathfrak{m}^L\left(s;q\right)\right)-$summing map satisfy the ideal property, i.e. $$\pi_{\left(p,\mathfrak{m}^L\left(s;q\right)\right)}^{L}(S\circ T\circ R)\leq Lip(S)\cdot\pi_{\left(p,\mathfrak{m}^L\left(s;q\right)\right)}^{L}(T)\cdot Lip(R)$$ whenever the composition makes sense.
\item
The following inclusion results are obvious:
\begin{itemize}
\item Let $0<q< s\leq\infty$. Then $$\mathfrak{M}_{(s;q)}^{L}(\mathbb{R}\times X\times X)\subset\mathfrak{M}_{(q;q)}^{L,0}(\mathbb{R}\times X\times X)\subset\mathfrak{M}_{(q;q)}^{L}(\mathbb{R}\times X\times X).$$ Moreover $$\mathfrak{m}_{(q;q)}^{L}(\sigma,x',x'')\leq\mathfrak{m}_{(q;q)}^{L,0}(\sigma,x',x'')\leq\mathfrak{m}_{(s;q)}^{L}(\sigma,x',x'')$$ for every $(\sigma,x',x'')\in\mathfrak{M}_{(s;q)}^{L}(\mathbb{R}\times X\times X)$.
\item Let $0<q\leq s\leq\infty$ and let the index $s'\left(q\right)$ is determined by the equation 
$$\frac{1}{s'\left(q\right)}+\frac{1}{s}=\frac{1}{q}.$$
Then $\mathfrak{M}_{(s;q)}^{L}(\mathbb{R}\times X\times X)\subset\ell_{s'\left(q\right)}(\mathbb{R}\times X\times X)$. Moreover
$$\left\|(\sigma,x',x'')\Big|\ell_{s'\left(q\right)}\right\|\leq\mathfrak{m}_{(s;q)}^{L}(\sigma,x',x'').$$
\item If $0<q\leq s_1\leq s_2\leq\infty$, then $\mathfrak{M}_{(s_2;q)}^{L}(\mathbb{R}\times X\times X)\subset\mathfrak{M}_{(s_1;q)}^{L}(\mathbb{R}\times X\times X)$. Moreover $$\mathfrak{m}_{(s_1;q)}^{L}(\sigma,x',x'')\leq\mathfrak{m}_{(s_2;q)}^{L}(\sigma,x',x'')$$ for every $(\sigma,x',x'')\in\mathfrak{M}_{(s_2;q)}^{L}(\mathbb{R}\times X\times X)$. Thus it follow that $$\Pi_{\left(p,\mathfrak{m}^L\left(s_1;q\right)\right)}^{L}(X,Y)\subset\Pi_{\left(p,\mathfrak{m}^L\left(s_2;q\right)\right)}^{L}(X,Y).$$ Moreover $$\pi_{\left(p,\mathfrak{m}^L\left(s_2;q\right)\right)}^{L}(T)\leq\pi_{\left(p,\mathfrak{m}^L\left(s_1;q\right)\right)}^{L}(T)$$
for every $T\in\Pi_{\left(p,\mathfrak{m}^L\left(s_1;q\right)\right)}^{L}(X,Y)$. 
\item If $0<q\leq s\leq\infty$ and $0<p_{1}\leq p_{2}$, then $\Pi_{\left(p_{1},\mathfrak{m}^L\left(s;q\right)\right)}^{L}(X,Y)\subset\Pi_{\left(p_{2},m^L\left(s;q\right)\right)}^{L}(X,Y)$. Moreover $$\pi_{\left(p_2,\mathfrak{m}^L\left(s;q\right)\right)}^{L}(T)\leq\pi_{\left(p_1,\mathfrak{m}^L\left(s;q\right)\right)}^{L}(T)$$ for every $T\in\Pi_{\left(p_{1},\mathfrak{m}^L\left(s;q\right)\right)}^{L}(X,Y)$. 
\end{itemize}
\item Inspired by the dual operators of linear and nonlinear operators between arbitrary Banach spaces, see I.Sawashima \cite{S95}, the Lipschitz dual operator $S^{\#}$ from $Y^{\#}$ into $X^{\#}$ of a map $S\in\mathbb{L}_{x_{0}}(X,Y)$ is defined by $$\left\langle g, Sx\right\rangle_{(Y^{\#},Y)}=\left\langle S^{\#}g,x\right\rangle_{(X^{\#},X)}$$ for every $x\in X$ and $g\in Y^{\#}$. This is a bounded linear operator and $Lip(S)=\left\|S^{\#}\right\|_{\mathfrak{L}(Y^{\#},X^{\#})}$.
\item Recall Definition \ref{seven}. If $0<p\leq q$ and $s=q$, then every Lipschitz map is Lipschitz $(\mathfrak{m}^L\left(q;q\right),p)$\\$-$summing. If $0< q\leq\infty$ and $p=q=s$, then $\pi_{\left(\mathfrak{m}^L\left(q;q\right),q\right)}^{L}(I_{X})=1$, where $I_{X}$ stands for the identity map on $X$.
\item Recall Definition \ref{eight}. If $1\leq p<\infty$ and $p=s=q$, then the $\left(p,\mathfrak{m}^L\left(p;p\right)\right)-$summing maps are the Lipschitz $p-$summing maps considered in \cite{J09}. If $1\leq s< p$ and $s=q$, then the $\left(p,\mathfrak{m}^L\left(s;s\right)\right)-$summing maps are the Lipschitz $(p,s)-$summing considered in \cite{WG09}. It is also proved in \cite{WG09} that every Lipschitz map is Lipschitz $(p,s)-$summing.
\end{enumerate}
\end{Concluding Remarks}

\section{Main Results}
The following characterization of Lipschitz $\left(\mathfrak{m}^L\left(s;q\right),p\right)-$summing maps is presented in the following theorem, it is somewhat inspired by analogous results in the linear theory.

\begin{thm}\label{a}
A Lipschitz map $S$ from $X$ into $Y$ is Lipschitz $\left(\mathfrak{m}^L\left(s;q\right),p\right)-$summing if and only if there is a constant $C_{10}\geq 0$ such that 
\begin{equation}\label{bus}
\Bigg[\sum\limits_{j=1}^{m}\left|\sigma_j\right|^{q}\bigg[\sum\limits_{k=1}^{n}\left|\left\langle g_{k},Sx'_j\right\rangle_{(Y^{\#},Y)}-\left\langle  g_{k},Sx''_j\right\rangle_{(Y^{\#},Y)}\right|^{s}\bigg]^\frac{q}{s}\Bigg]^{\frac{1}{q}}
\leq C_{10}\cdot\norm{(\sigma,x',x'')\Big|\ell_p^{L,w}}\cdot\left\|(g_k)_{k=1}^{n}\Big|\ell_s(Y^{\#})\right\| 
\end{equation}
for every $\sigma_1,\cdot\cdot\cdot,\sigma_m\in\mathbb{R}$; $x'_1,\cdots, x'_m, x''_1,\cdots,x''_m\in X$; $g_1,\cdots,g_n\in Y^{\#}$ and $m,n\in\mathbb{N}$. Moreover 
$$\pi_{\left(\mathfrak{m}^L\left(s;q\right),p\right)}^{L}(S)=\inf C_{10}.$$
\end{thm}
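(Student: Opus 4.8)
The plan is to unwind the definition of the Lipschitz $\left(\mathfrak{m}^L\left(s;q\right),p\right)$-summing property using the integral characterization of the mixed norm $\mathfrak{m}_{(s;q)}^{L}$ from Proposition \ref{two}, and then to dualize the inner $\ell_s$-average via the duality between $\ell_{s/q}$ and $\ell_{(s/q)'}$ (equivalently, by writing a nonnegative $\ell_{s/q}$-element as a supremum of pairings). The bridge between the two sides of (\ref{bus}) is the observation that, for a finite family $(g_k)_{k=1}^n \subset Y^{\#}$ with $\left\|(g_k)_{k=1}^n \mid \ell_s(Y^{\#})\right\| \le 1$, the finitely supported atomic measure $\mu = \sum_{k=1}^n \|g_k\|^{s}\,\delta_{g_k/\|g_k\|} \in W(B_{Y^{\#}})$ (after normalization) produces exactly the inner sum $\sum_{k=1}^n \left|\langle g_k, Sx'_j\rangle - \langle g_k, Sx''_j\rangle\right|^{s}$ as the integral $\int_{B_{Y^{\#}}} |g(Sx'_j)-g(Sx''_j)|^{s}\,d\mu(g)$, up to the normalizing constant; conversely every probability measure on $B_{Y^{\#}}$ is a weak$^*$-limit of such atomic measures. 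So the heart of the argument is to show (\ref{bus}) is just a reformulation, uniform in the choice of $(g_k)$, of the inequality $\mathfrak{m}_{(s;q)}^{L}(\sigma, Sx', Sx'') \le C\cdot \left\|(\sigma,x',x'')\mid \ell_p^{L,w}\right\|$.

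Concretely I would proceed as follows. First, assume $S$ is Lipschitz $\left(\mathfrak{m}^L\left(s;q\right),p\right)$-summing with constant $C_8$. Given $(\sigma_j)$, $(x'_j)$, $(x''_j)$ and $(g_k)_{k=1}^n$, I normalize so that $\left\|(g_k)_k \mid \ell_s(Y^{\#})\right\| = 1$ and form the probability measure $\mu$ on $B_{Y^{\#}}$ supported on the points $g_k/\|g_k\|$ with mass $\|g_k\|^s$ (treating $g_k=0$ trivially). Applying Proposition \ref{two} to the sequence $(\sigma, Sx', Sx'') \in \mathfrak{M}_{(s;q)}^{L}(\mathbb{R}\times Y\times Y)$ with this particular $\mu$ gives
$$
\Bigg[\sum_{j=1}^m \Big[\int_{B_{Y^{\#}}} |\sigma_j|^{s}|g(Sx'_j)-g(Sx''_j)|^{s}\,d\mu(g)\Big]^{q/s}\Bigg]^{1/q} \le \mathfrak{m}_{(s;q)}^{L}(\sigma, Sx', Sx'') \le C_8 \cdot \left\|(\sigma,x',x'')\mid\ell_p^{L,w}\right\|,
$$
and substituting the atomic $\mu$ turns the left side into the left side of (\ref{bus}) (after undoing the normalization, the factor $\left\|(g_k)_k\mid\ell_s(Y^{\#})\right\|$ reappears on the right by homogeneity). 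Hence (\ref{bus}) holds with $C_{10}\le C_8$. Conversely, suppose (\ref{bus}) holds with constant $C_{10}$; I must recover the mixed-norm inequality. Given any $\mu \in W(B_{Y^{\#}})$, I approximate the integral $\int_{B_{Y^{\#}}} |g(Sx'_j)-g(Sx''_j)|^s\,d\mu(g)$ by Riemann-type sums, i.e. by atomic measures $\mu_n = \sum_k c_k\,\delta_{h_k}$ with $h_k \in B_{Y^{\#}}$, $c_k\ge 0$, $\sum c_k = 1$; setting $g_k = c_k^{1/s} h_k$ one has $\left\|(g_k)_k\mid\ell_s(Y^{\#})\right\|\le 1$ and the left side of (\ref{bus}) for this choice equals the $\mu_n$-discretized version of the left side of the Proposition \ref{two} expression. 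Passing to the limit (finitely many $j$, dominated convergence, and the fact that each $g\mapsto |g(Sx'_j)-g(Sx''_j)|$ is weak$^*$-continuous on $B_{Y^{\#}}$) yields
$$
\Bigg[\sum_{j=1}^m\Big[\int_{B_{Y^{\#}}}|\sigma_j|^s|g(Sx'_j)-g(Sx''_j)|^s\,d\mu(g)\Big]^{q/s}\Bigg]^{1/q} \le C_{10}\cdot\left\|(\sigma,x',x'')\mid\ell_p^{L,w}\right\|,
$$
and taking the supremum over $\mu\in W(B_{Y^{\#}})$ and invoking Proposition \ref{two} gives $\mathfrak{m}_{(s;q)}^{L}(\sigma, Sx', Sx'')\le C_{10}\cdot\left\|(\sigma,x',x'')\mid\ell_p^{L,w}\right\|$, so $S$ is Lipschitz $\left(\mathfrak{m}^L\left(s;q\right),p\right)$-summing with $\pi^L_{(\mathfrak{m}^L(s;q),p)}(S)\le C_{10}$. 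Combining the two estimates on the constants gives $\pi^L_{(\mathfrak{m}^L(s;q),p)}(S) = \inf C_{10}$.

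I would handle the degenerate endpoints separately: when $s=q$ the inner bracket collapses (the $\ell_{s/q}$ exponent is $1$) and Proposition \ref{two} does not literally apply, but then $\mathfrak{m}^L_{(q;q)} = \left\|\cdot\mid\ell_q^{L,w}\right\|$ by Concluding Remarks \ref{66}, and (\ref{bus}) reduces directly to the defining inequality of Definition \ref{seven} by taking a single $g_1 = g \in B_{Y^{\#}}$ and supping over $g$; the case $s=\infty$ is similar using $\mathfrak{m}^L_{(\infty;q)} = \left\|\cdot\mid\ell_q\right\|$. The main obstacle I anticipate is the approximation step in the converse direction: one must argue carefully that arbitrary probability measures on $B_{Y^{\#}}$ can be approximated (in the relevant weak sense, uniformly enough across the finitely many indices $j$) by finitely supported ones so that the limit of the left-hand sides is controlled, and that this is compatible with the $\ell_s(Y^{\#})$-normalization bookkeeping; this is essentially a weak$^*$-density argument for atomic measures together with dominated convergence, but it needs to be stated with the right uniformity so that the constant $C_{10}$ is preserved in the limit. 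The forward direction, by contrast, is a clean one-line substitution once the atomic measure is set up. I would also note that throughout one may assume $d_X(x'_j,x''_j)\neq 0$ and $\sigma_j\neq 0$ (dropping null terms), so that all the $\ell_p^{L,w}$ and mixed-norm quantities are genuinely finite and the factorizations in Definition \ref{car} are available.
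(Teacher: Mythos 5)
Your proposal is correct and follows essentially the same route as the paper: the forward direction via the atomic probability measure $\mu=\sum_k \mathrm{Lip}(g_k)^s\,\delta_{g_k/\mathrm{Lip}(g_k)}/\left\|(g_k)\,\middle|\,\ell_s(Y^{\#})\right\|^s$ combined with Proposition \ref{two}, and the converse by noting that (\ref{bus}) is the integral inequality for finitely supported measures, extending to all of $W(B_{Y^{\#}})$ by weak$^*$-density, and then taking the supremum via Proposition \ref{two}. Your extra care about the endpoint cases $s=q$, $s=\infty$ and the uniformity of the approximation is a refinement the paper glosses over, but it does not change the argument.
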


\begin{proof} 
Assume that $S$ is a Lipschitz $\left(\mathfrak{m}^L\left(s;q\right),p\right)-$summing map. Consider $g_1,\cdot\cdot\cdot,g_n\in Y^{\#}$ and define the discrete probability  $\mu=\sum\limits_{k=1}^{n}t_k \delta_k$, where $t_k=Lip(g_k)^{s}\cdot\left\|(g_k)_{h=1}^{n}\Big|\ell_s(Y^{\#})\right\|^{-s}$ and $\delta_k$ denotes the Dirac measure at $b_k =\frac{g_k}{Lip(g_k)}\in B_{Y^{\#}}$; $k=1,\cdot\cdot\cdot,n$. Then $\mu\in W(B_{Y^{\#}})$. 

For $\sigma_1,\cdot\cdot\cdot,\sigma_m\in\mathbb{R}$, $x'_1,\cdots, x'_m, x''_1,\cdots,x''_m\in X$, we conclude from Proposition \ref{two} that
\begin{align}
\Bigg[\sum\limits_{j=1}^{m}\left|\sigma_j\right|^{q}&\bigg[\sum\limits_{k=1}^{n}\left|\left\langle g_{k},Sx'_j\right\rangle_{(Y^{\#},Y)}-\left\langle  g_{k},Sx''_j\right\rangle_{(Y^{\#},Y)}\right|^{s}\bigg]^\frac{q}{s}\Bigg]^{\frac{1}{q}} \nonumber \\ 
&=\Bigg[\sum\limits_{j=1}^{m}\left|\sigma_j\right|^{q}\bigg[\int\limits_{B_{Y^{\#}}}\left|\left\langle g,Sx'_j\right\rangle_{(Y^{\#},Y)}-\left\langle  g,Sx''_j\right\rangle_{(Y^{\#},Y)}\right|^{s}d\mu(g)\bigg]^\frac{q}{s}\Bigg]^{\frac{1}{q}}\cdot\left\|(g_k)_{k=1}^{n}\Big|\ell_s(Y^{\#})\right\| \nonumber \\
&\leq\mathfrak{m}_{(s;q)}^{L}(\sigma,Sx',Sx'')\cdot\left\|(g_k)_{k=1}^{n}\Big|\ell_s(Y^{\#})\right\|\nonumber \\
&\leq\pi_{\left(\mathfrak{m}^L\left(s;q\right),p\right)}^{L}(S)\cdot\left\|(\sigma,x',x'')\Big|\ell_p^{L,w}\right\|\cdot\left\|(g_k)_{k=1}^{n}\Big|\ell_s(Y^{\#})\right\|. \nonumber
\end{align} 
To show the converse, observe that (\ref{bus}) means 
\begin{align}\label{one}
\Bigg[\sum\limits_{j=1}^{m}\left|\sigma_j\right|^{q}&\bigg[\int\limits_{B_{Y^{\#}}}\left|\left\langle g,Sx'_j\right\rangle_{(Y^{\#},Y)}-\left\langle  g,Sx''_j\right\rangle_{(Y^{\#},Y)}\right|^{s}d\mu(g)\bigg]^\frac{q}{s}\Bigg]^{\frac{1}{q}}\leq C_{10}\cdot\left\|(\sigma,x',x'')\Big|\ell_p^{L,w}\right\| 
\end{align}
for every discrete probability measure $\mu$ on $B_{Y^{\#}}$ and $\sigma_1,\cdot\cdot\cdot,\sigma_m\in\mathbb{R}$; $x'_1,\cdots, x'_m, x''_1,\cdots,x''_m\in X$. 

Since the set of all finitely supported probability measures on $B_{Y^{\#}}$ is $\sigma(C(B_{Y^{\#}})^{\ast},C(B_{Y^{\#}}))-$ dense in the set of all probability measures on $B_{Y^{\#}}$, it follows that (\ref{one}) holds for all probability measures $\mu$ on $B_{Y^{\#}}$ and $\sigma_1,\cdot\cdot\cdot,\sigma_m\in\mathbb{R}$, $x'_1,\cdots, x'_m, x''_1,\cdots,x''_m\in X$. 

Taking the supremum over $\mu\in W(B_{Y^{\#}})$ on the left side of (\ref{one}) and using Proposition \ref{two}, we obtain $$\mathfrak{m}_{(s;q)}^{L}(\sigma,Sx',Sx'')\leq C_{10}\cdot\left\|(\sigma,x',x'')\Big|\ell_p^{L,w}\right\|.$$
\textcolor{red}{$\blacksquare$}
\end{proof}

In the previous section we obtained that for any Lipschitz mapping $S$ between pointed metric spaces $X$ and $Y$ one can naturally define a Lipschitz dual operator acting between the Lipschitz dual spaces $Y^\#$ and $X^\#$. The next theorem connects Lipschitz $\left(\mathfrak{m}^L\left(s;q\right),p\right)-$summing maps with the type constants of the Lipschitz dual operators, which is a well-known linear concept, see \cite{P80}.

\begin{thm}\label{type10}
Let $0<p\leq q<s<2$. If the Lipschitz dual operator $S^{\#}\in\mathfrak{L}(Y^{\#},X^{\#})$ of the map $S\in\mathbb{L}_{x_{0}}(X,Y)$ is $(s,p)-$type, then $S$ is a Lipschitz $\left(\mathfrak{m}^L\left(s;q\right),p\right)-$summing map. Moreover $$\pi_{\left(\mathfrak{m}^L\left(s;q\right),p\right)}^{L}(S)\leq\mathbf{T}_{(s,p)}(S^{\#}).$$
\end{thm}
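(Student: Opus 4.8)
The plan is to combine the characterization of Lipschitz $\left(\mathfrak{m}^L\left(s;q\right),p\right)$-summing maps from Theorem~\ref{a} with the integral representation of the $(s,p)$-type constant in \eqref{type1} and \eqref{type2}. Fix finite sequences $\sigma_1,\dots,\sigma_m\in\mathbb{R}$; $x'_1,\dots,x'_m,x''_1,\dots,x''_m\in X$; and $g_1,\dots,g_n\in Y^{\#}$. By Theorem~\ref{a} it suffices to bound the left-hand side of \eqref{bus} by $\mathbf{T}_{(s,p)}(S^{\#})\cdot\norm{(\sigma,x',x'')\big|\ell_p^{L,w}}\cdot\left\|(g_k)_{k=1}^{n}\big|\ell_s(Y^{\#})\right\|$. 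The starting observation is the bridge between the Lipschitz pairing and the Lipschitz dual operator: $\left\langle g_k,Sx'_j\right\rangle_{(Y^{\#},Y)}-\left\langle g_k,Sx''_j\right\rangle_{(Y^{\#},Y)}=\left\langle S^{\#}g_k,x'_j\right\rangle_{(X^{\#},X)}-\left\langle S^{\#}g_k,x''_j\right\rangle_{(X^{\#},X)}=(S^{\#}g_k)(x'_j)-(S^{\#}g_k)(x''_j)$, so the inner $\ell_s$-sum over $k$ can be rewritten entirely in terms of the vectors $S^{\#}g_k\in X^{\#}$.

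Next I would introduce the $s$-stable integral. Using \eqref{type1} with $\xi_k=(S^{\#}g_k)(x'_j)-(S^{\#}g_k)(x''_j)$, for each fixed $j$ we have
$$\bigg[\sum_{k=1}^{n}\left|(S^{\#}g_k)(x'_j)-(S^{\#}g_k)(x''_j)\right|^{s}\bigg]^{\frac{1}{s}}=c_{sp}^{-1}\bigg(\int_{\mathbb{R}^{n}}\bigg|\sum_{k=1}^{n}t_k\big[(S^{\#}g_k)(x'_j)-(S^{\#}g_k)(x''_j)\big]\bigg|^{p}d\mu_s^{n}(t)\bigg)^{\frac{1}{p}}.$$
Now pull the scalar $|\sigma_j|$ inside, raise to the power $q$, sum over $j$, and apply Minkowski's integral inequality in the form $\|\cdot\|_{\ell^{q/p}_j(L^1_t)}\le\|\cdot\|_{L^1_t(\ell^{q/p}_j)}$ (valid since $q\ge p$, so $q/p\ge1$) to exchange the sum over $j$ with the integral over $t$. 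After taking $p$-th roots this yields
$$\Bigg[\sum_{j=1}^{m}|\sigma_j|^{q}\bigg[\sum_{k=1}^{n}\left|(S^{\#}g_k)(x'_j)-(S^{\#}g_k)(x''_j)\right|^{s}\bigg]^{\frac{q}{s}}\Bigg]^{\frac{1}{q}}\le c_{sp}^{-1}\Bigg(\int_{\mathbb{R}^{n}}\Bigg[\sum_{j=1}^{m}|\sigma_j|^{q}\bigg|\Big\langle\textstyle\sum_{k=1}^{n}t_k S^{\#}g_k,\,x'_j\Big\rangle-\Big\langle\textstyle\sum_{k=1}^{n}t_k S^{\#}g_k,\,x''_j\Big\rangle\bigg|^{q}\Bigg]^{\frac{p}{q}}d\mu_s^{n}(t)\Bigg)^{\frac{1}{p}}.$$
The inner bracket is now, for each fixed $t$, a sum against a single functional $h_t:=\sum_k t_k S^{\#}g_k\in X^{\#}$, which is bounded above by $Lip(h_t)^{q}\cdot\norm{(\sigma,x',x'')\big|\ell_p^{L,w}}^{q}$ when $q\le p$... but here $q\ge p$; instead one uses the monotonicity $\ell_p^{L,w}\hookrightarrow\ell_q^{L,w}$ together with the definition of the weak norm to get $\big[\sum_j|\sigma_j|^q|h_t(x'_j)-h_t(x''_j)|^q\big]^{1/q}\le Lip(h_t)\cdot\norm{(\sigma,x',x'')\big|\ell_p^{L,w}}$, since $\frac{h_t}{Lip(h_t)}\in B_{X^{\#}}$.

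Substituting this and factoring out the constant $\norm{(\sigma,x',x'')\big|\ell_p^{L,w}}$, the remaining integral becomes $c_{sp}^{-1}\big(\int_{\mathbb{R}^n}Lip(h_t)^{p}\,d\mu_s^{n}(t)\big)^{1/p}=c_{sp}^{-1}\big(\int_{\mathbb{R}^n}\|\sum_k t_k S^{\#}g_k\|_{X^{\#}}^{p}\,d\mu_s^{n}(t)\big)^{1/p}=t_{(s,p)}\big((S^{\#}g_k)_{k=1}^n\big)$, which is exactly the left-hand side of \eqref{type3}. Applying the hypothesis that $S^{\#}$ is of $(s,p)$-type gives $t_{(s,p)}\big((S^{\#}g_k)_k\big)\le\mathbf{T}_{(s,p)}(S^{\#})\cdot\|(S^{\#}g_k)_k|\ell_s(X^{\#})\|\le\mathbf{T}_{(s,p)}(S^{\#})\cdot\|S^{\#}\|\cdot\|(g_k)_k|\ell_s(Y^{\#})\|$; a small subtlety is whether one wants the cruder bound with $\|S^{\#}\|=Lip(S)$ or a sharper statement — checking the intended constant, the version that matches the claimed inequality simply absorbs $\|S^{\#}g_k\|\le Lip(g_k)$ pointwise, so $\|(S^{\#}g_k)_k|\ell_s(X^{\#})\|\le\|(g_k)_k|\ell_s(Y^{\#})\|$ is false in general and one should instead re-run the argument keeping $g_k$ in $Y^{\#}$ from the start — see below. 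Either way, combining with Theorem~\ref{a} yields $\pi_{\left(\mathfrak{m}^L\left(s;q\right),p\right)}^{L}(S)\le\inf C_{10}\le\mathbf{T}_{(s,p)}(S^{\#})$, completing the proof.

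\textbf{Expected main obstacle.} The delicate point is the order of the two $L^p$/$\ell^q$ norms: Minkowski's integral inequality only moves the sum inside the integral in the direction $p\le q$, which is exactly the hypothesis $p\le q$, so this is the linchpin and must be invoked carefully. The second subtlety, flagged above, is bookkeeping the constant: to land precisely on $\mathbf{T}_{(s,p)}(S^{\#})$ (rather than $\mathbf{T}_{(s,p)}(S^{\#})\cdot\|S^{\#}\|$) one should apply the definition of the $(s,p)$-type norm \eqref{type3} directly to the finite sequence $(S^{\#}g_1,\dots,S^{\#}g_n)$ in $X^{\#}$ and then bound $\|(S^{\#}g_k)_k|\ell_s(X^{\#})\|$ against $\|(g_k)_k|\ell_s(Y^{\#})\|$ using $Lip(S^{\#}g_k)\le\|S^{\#}\|\cdot Lip(g_k)$ — so in fact the honest constant is $\mathbf{T}_{(s,p)}(S^{\#})\cdot\|S^{\#}\|=\mathbf{T}_{(s,p)}(S^{\#})\cdot Lip(S)$, and the clean statement $\pi^L_{(\mathfrak{m}^L(s;q),p)}(S)\le\mathbf{T}_{(s,p)}(S^{\#})$ requires that $S^{\#}$ already have operator norm controlled, or that one rescales; I would present the estimate that genuinely falls out of the computation and note that under the normalization $Lip(S)\le1$ it reduces to the displayed bound.
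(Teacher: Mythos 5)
Your computational core is sound and runs essentially parallel to the paper's proof: the dualization $\left\langle g_k,Sx'_j\right\rangle-\left\langle g_k,Sx''_j\right\rangle=\left\langle S^{\#}g_k,x'_j\right\rangle-\left\langle S^{\#}g_k,x''_j\right\rangle$, the use of (\ref{type1}) to turn the inner $\ell_s$-sum over $k$ into a stable-law integral, and the bound of the resulting expression by $Lip\big(\sum_k t_k S^{\#}g_k\big)\cdot\norm{(\sigma,x',x'')\big|\ell_p^{L,w}}$ pointwise in $t$. The only structural difference is the order of operations: you keep the exponent $q$ on the outer sum, interchange sum and integral via Minkowski's integral inequality with exponent $q/p\geq 1$, and only then pass from $q$ to $p$ inside the integral; the paper instead passes from the $\ell_q$-sum to the $\ell_p$-sum at the very first step (monotonicity of $\ell$-norms, using $p\leq q$), after which sum and integral both carry the exponent $p$ and the interchange is plain Fubini, with no Minkowski needed. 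Both variants are valid and give the same estimate.

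The genuine flaw is in your final step and in the conclusion you draw from it. Definition (\ref{type3}) states that $S^{\#}\in\mathfrak{L}(Y^{\#},X^{\#})$ is of $(s,p)$-type when $t_{(s,p)}(S^{\#}g)\leq\varrho\left\|g\,\big|\ell_s(Y^{\#})\right\|$ for every finite sequence $g=(g_k)$ in the \emph{domain} $E=Y^{\#}$; the right-hand side is the strong $\ell_s$-norm of the original sequence in $Y^{\#}$, not of the image sequence in $X^{\#}$. Applying this directly to $(g_k)_{k=1}^{n}$ gives $t_{(s,p)}\big((S^{\#}g_k)_k\big)\leq\mathbf{T}_{(s,p)}(S^{\#})\cdot\left\|(g_k)_{k=1}^{n}\big|\ell_s(Y^{\#})\right\|$, which, combined with your earlier estimate, lands exactly on the constant $C_{10}=\mathbf{T}_{(s,p)}(S^{\#})$ in (\ref{bus}) and hence, via Theorem \ref{a}, on the stated bound $\pi_{\left(\mathfrak{m}^L\left(s;q\right),p\right)}^{L}(S)\leq\mathbf{T}_{(s,p)}(S^{\#})$ — this is precisely how the paper concludes. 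Your version instead bounds $t_{(s,p)}\big((S^{\#}g_k)_k\big)$ by $\mathbf{T}_{(s,p)}(S^{\#})\cdot\left\|(S^{\#}g_k)_k\big|\ell_s(X^{\#})\right\|$, which is not what the definition provides, and then tries to dominate $\left\|(S^{\#}g_k)_k\big|\ell_s(X^{\#})\right\|$ by $\left\|(g_k)_k\big|\ell_s(Y^{\#})\right\|$, creating a spurious factor $\left\|S^{\#}\right\|=Lip(S)$ and leading you to the incorrect assertion that the "honest" constant is $\mathbf{T}_{(s,p)}(S^{\#})\cdot Lip(S)$ and that the theorem requires a normalization $Lip(S)\leq 1$ or a rescaling. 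No such factor arises; once the type definition is read correctly, your argument proves the theorem with the stated constant as it stands.
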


\begin{proof}
Let $\sigma_1,\cdot\cdot\cdot,\sigma_m\in\mathbb{R}$; $x'_1,\cdots, x'_m, x''_1,\cdots,x''_m\in X$ and $g_1,\cdots,g_n\in Y^{\#}$. Then from (\ref{type1}) and $p\leq q$ we have

\begin{align}
&\Bigg[\sum\limits_{j=1}^{m}\left|\sigma_j\right|^{q}\bigg[\sum\limits_{k=1}^{n}\left|\left\langle g_{k},Sx'_j\right\rangle_{(Y^{\#},Y)}-\left\langle  g_{k},Sx''_j\right\rangle_{(Y^{\#},Y)}\right|^{s}\bigg]^\frac{q}{s}\Bigg]^{\frac{1}{q}}\nonumber \\
&\leq\Bigg[\sum\limits_{j=1}^{m}\left|\sigma_j\right|^{p}\bigg[\sum\limits_{k=1}^{n}\left|\left\langle g_{k},Sx'_j\right\rangle_{(Y^{\#},Y)}-\left\langle  g_{k},Sx''_j\right\rangle_{(Y^{\#},Y)}\right|^{s}\bigg]^\frac{p}{s}\Bigg]^{\frac{1}{p}}\nonumber \\
&=c_{sp}^{-1}\cdot\Bigg[\sum\limits_{j=1}^{m}\left|\sigma_j\right|^{p}\int\limits_{\mathbb{R}^{n}}\left|\sum\limits_{k=1}^{n}t_{k}\cdot\left\langle g_{k},Sx'_j\right\rangle_{(Y^{\#},Y)}-\sum\limits_{k=1}^{n}t_{k}\cdot\left\langle  g_{k},Sx''_j\right\rangle_{(Y^{\#},Y)}\right|^{p} d\mu_{s}^{n}(t)\Bigg]^{\frac{1}{p}}\nonumber \\
&=c_{sp}^{-1}\cdot\Bigg[\int\limits_{\mathbb{R}^{n}}\sum\limits_{j=1}^{m}\left|\sigma_j\right|^{p}\left|\sum\limits_{k=1}^{n}t_{k}\cdot\left\langle S^{\#}g_{k},x'_j\right\rangle_{(X^{\#},X)}-\sum\limits_{k=1}^{n}t_{k}\cdot\left\langle  S^{\#}g_{k},x''_j\right\rangle_{(X^{\#},X)}\right|^{p} d\mu_{s}^{n}(t)\Bigg]^{\frac{1}{p}}\nonumber \\
&=c_{sp}^{-1}\cdot\Bigg[\int\limits_{\mathbb{R}^{n}}\sum\limits_{j=1}^{m}\left|\sigma_j\right|^{p}\left|\left\langle\sum\limits_{k=1}^{n}t_{k}S^{\#}g_{k},x'_j\right\rangle_{(X^{\#},X)}-\left\langle\sum\limits_{k=1}^{n}t_{k}S^{\#}g_{k},x''_j\right\rangle_{(X^{\#},X)}\right|^{p} d\mu_{s}^{n}(t)\Bigg]^{\frac{1}{p}}\nonumber \\
&\leq c_{sp}^{-1}\cdot\Bigg[\int\limits_{\mathbb{R}^{n}} Lip\left(\sum\limits_{k=1}^{n}t_{k}S^{\#}g_{k}\right)^{p} d\mu_{s}^{n}(t)\Bigg]^{\frac{1}{p}}\cdot\left\|(\sigma,x',x'')\Big|\ell_p^{L,w}\right\|.\nonumber 
\end{align}
Hence from (\ref{type2}) and (\ref{type3}) we obtain
\begin{align}
\Bigg[\sum\limits_{j=1}^{m}\left|\sigma_j\right|^{q}&\bigg[\sum\limits_{k=1}^{n}\left|\left\langle g_{k},Sx'_j\right\rangle_{(Y^{\#},Y)}-\left\langle  g_{k},Sx''_j\right\rangle_{(Y^{\#},Y)}\right|^{s}\bigg]^\frac{q}{s}\Bigg]^{\frac{1}{q}}\nonumber \\
&\leq t_{(s,p)}(S^{\#}g_{k})\cdot\left\|(\sigma,x',x'')\Big|\ell_p^{L,w}\right\|\nonumber \\
&\leq\mathbf{T}_{(s,p)}(S^{\#})\cdot\left\|(\sigma,x',x'')\Big|\ell_p^{L,w}\right\|\cdot\left\|(g_k)_{k=1}^{n}\Big|\ell_s(Y^{\#})\right\|.\nonumber
\end{align}
\end{proof}

In the case that $p=q$, Theorem \ref{type10} reads as follows.

\begin{Corollary}\label{type5}
Let $0<p<s<2$. If the Lipschitz dual operator $S^{\#}\in\mathfrak{L}(Y^{\#},X^{\#})$ of the map $S\in\mathbb{L}_{x_{0}}(X,Y)$ is of $(s,p)-$type, then $S$ is a Lipschitz $\left(\mathfrak{m}^L\left(s;p\right),p\right)-$summing map. Moreover $$\pi_{\left(\mathfrak{m}^L\left(s;p\right),p\right)}^{L}(S)\leq\mathbf{T}_{(s,p)}(S^{\#}).$$
\end{Corollary}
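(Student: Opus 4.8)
The plan is to derive Corollary \ref{type5} from Theorem \ref{type10} simply by specializing the parameter $q$ to the value $p$. The hypothesis of Theorem \ref{type10} requires $0<p\leq q<s<2$; taking $q=p$ gives precisely the constraint $0<p<s<2$ of the corollary, which is legitimate since $q=p$ does satisfy $p\leq q$. So the proof is just: apply Theorem \ref{type10} with $q$ replaced by $p$.

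Concretely, I would first note that by definition $\mathfrak{m}^{L}(s;q)$ with $q=p$ is the symbol $\mathfrak{m}^{L}(s;p)$, so the class $\Pi^{L}_{(\mathfrak{m}^{L}(s;p),p)}(X,Y)$ is exactly the class $\Pi^{L}_{(\mathfrak{m}^{L}(s;q),p)}(X,Y)$ of Theorem \ref{type10} evaluated at $q=p$. Then I would invoke Theorem \ref{type10}: assuming $S^{\#}$ is of $(s,p)$-type, the theorem yields that $S$ is Lipschitz $(\mathfrak{m}^{L}(s;p),p)$-summing together with the norm estimate $\pi^{L}_{(\mathfrak{m}^{L}(s;p),p)}(S)\leq\mathbf{T}_{(s,p)}(S^{\#})$. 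This is literally the conclusion of the corollary, so nothing further is needed.

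There is essentially no obstacle here; the only thing worth checking is that the degenerate case $q=p$ does not cause the chain of inequalities in the proof of Theorem \ref{type10} to collapse. In that proof the very first inequality uses the monotonicity of $\ell_{q}\hookrightarrow\ell_{p}$ in the outer index (valid because $p\leq q$), and when $q=p$ this step becomes an equality rather than a strict inequality, which is harmless. The rest of the argument (the identity \eqref{type1} for stable laws, the duality $\langle g_{k},Sx\rangle=\langle S^{\#}g_{k},x\rangle$, the Lipschitz estimate for $\sum_{k}t_{k}S^{\#}g_{k}$, and the definitions \eqref{type2}, \eqref{type3} of the $(s,p)$-type constant) goes through verbatim with $q=p$. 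Hence the corollary follows immediately, and one may simply write that it is the case $p=q$ of Theorem \ref{type10}.
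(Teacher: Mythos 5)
Your proposal is correct and matches the paper exactly: the paper states Corollary \ref{type5} as the case $p=q$ of Theorem \ref{type10} with no further argument, which is precisely your specialization. Your extra check that the first inequality in the theorem's proof merely becomes an equality when $q=p$ is a harmless (and accurate) confirmation.
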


The composition result that will be used later is the following:

\begin{thm}\label{manaf1}
Let $0<p\leq s\leq\infty$, $p\geq r$. If $S\in\Pi_{\left(\mathfrak{m}^L\left(s;p\right),r\right)}^{L}(X,Y)$ and $T\in\Pi_{\left(s,\mathfrak{m}^L\left(s;s\right)\right)}^{L}(Y,Z)$, then $T\circ S\in\Pi_{\left(p,\mathfrak{m}^L\left(r;r\right)\right)}^{L}(X,Z)$. Moreover
$$\pi_{\left(p,\mathfrak{m}^L\left(r;r\right)\right)}^{L}(T\circ S)\leq\pi_{\left(s,\mathfrak{m}^L\left(s;s\right)\right)}^{L}(T)\cdot \pi_{\left(\mathfrak{m}^L\left(s;p\right),r\right)}^{L}(S).$$
\end{thm}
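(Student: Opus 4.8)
The plan is to peel all three ideal classes back to their defining inequalities on finite sequences, replace $(\sigma,x',x'')$ by the mixed-summable factorization furnished by $S$, push that factorization through the Lipschitz $s$-summing property of $T$, and then reassemble the strong $p$-norm by Hölder's inequality, the last step being made possible precisely by the relation $\frac{1}{s'\left(p\right)}+\frac{1}{s}=\frac{1}{p}$.

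First I would rewrite the hypotheses and the target. By Concluding Remarks \ref{66}(2) one has $\mathfrak{m}_{(s;s)}^{L}(\cdot)=\left\|(\cdot)\Big|\ell_s^{L,w}\right\|$ and $\mathfrak{m}_{(r;r)}^{L}(\cdot)=\left\|(\cdot)\Big|\ell_r^{L,w}\right\|$, so that (Definition \ref{eight}) the hypothesis on $T$ reads
$$\left\|(\rho,Ty',Ty'')\Big|\ell_s\right\|\leq\pi_{\left(s,\mathfrak{m}^L\left(s;s\right)\right)}^{L}(T)\cdot\left\|(\rho,y',y'')\Big|\ell_s^{L,w}\right\|$$
for every finite sequence $(\rho,y',y'')\subset\mathbb{R}\times Y\times Y$, while the conclusion to be proved is that
$$\left\|(\sigma,TSx',TSx'')\Big|\ell_p\right\|\leq\pi_{\left(s,\mathfrak{m}^L\left(s;s\right)\right)}^{L}(T)\cdot\pi_{\left(\mathfrak{m}^L\left(s;p\right),r\right)}^{L}(S)\cdot\left\|(\sigma,x',x'')\Big|\ell_r^{L,w}\right\|$$
for every finite sequence $(\sigma,x',x'')\subset\mathbb{R}\times X\times X$.

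Next, I would fix such a sequence and $\varepsilon>0$. Since $S\in\Pi_{\left(\mathfrak{m}^L\left(s;p\right),r\right)}^{L}(X,Y)$, Definition \ref{seven} gives $\mathfrak{m}_{(s;p)}^{L}(\sigma,Sx',Sx'')\leq\pi_{\left(\mathfrak{m}^L\left(s;p\right),r\right)}^{L}(S)\cdot\left\|(\sigma,x',x'')\Big|\ell_r^{L,w}\right\|$, so by Definition \ref{car} I can choose a finite, nowhere-vanishing scalar sequence $\tau\in\ell_{s'\left(p\right)}$, where $\frac{1}{s'\left(p\right)}+\frac{1}{s}=\frac{1}{p}$, with
$$\left\|\tau\Big|\ell_{s'\left(p\right)}\right\|\cdot\left\|\big(\tfrac{\sigma}{\tau},Sx',Sx''\big)\Big|\ell_s^{L,w}\right\|\leq\pi_{\left(\mathfrak{m}^L\left(s;p\right),r\right)}^{L}(S)\cdot\left\|(\sigma,x',x'')\Big|\ell_r^{L,w}\right\|+\varepsilon.$$
Applying the $T$-inequality above to the finite sequence $\big(\tfrac{\sigma}{\tau},Sx',Sx''\big)\subset\mathbb{R}\times Y\times Y$ then yields $\left\|\big(\tfrac{\sigma}{\tau},TSx',TSx''\big)\Big|\ell_s\right\|\leq\pi_{\left(s,\mathfrak{m}^L\left(s;s\right)\right)}^{L}(T)\cdot\left\|\big(\tfrac{\sigma}{\tau},Sx',Sx''\big)\Big|\ell_s^{L,w}\right\|$.

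Finally I would reassemble: writing $|\sigma_j|\,d_Z(TSx'_j,TSx''_j)=|\tau_j|\cdot\big(\tfrac{|\sigma_j|}{|\tau_j|}\,d_Z(TSx'_j,TSx''_j)\big)$ and applying Hölder's inequality with the conjugate exponents $\tfrac{s'\left(p\right)}{p}$ and $\tfrac{s}{p}$ — both $\geq1$ since $p\leq s$ and $p\leq s'\left(p\right)$, and conjugate precisely because $\frac{1}{s'\left(p\right)}+\frac{1}{s}=\frac{1}{p}$ — gives
$$\left\|(\sigma,TSx',TSx'')\Big|\ell_p\right\|\leq\left\|\tau\Big|\ell_{s'\left(p\right)}\right\|\cdot\left\|\big(\tfrac{\sigma}{\tau},TSx',TSx''\big)\Big|\ell_s\right\|.$$
Chaining the last three estimates and letting $\varepsilon\to0$ gives exactly the desired bound, whence $T\circ S\in\Pi_{\left(p,\mathfrak{m}^L\left(r;r\right)\right)}^{L}(X,Z)$. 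I do not expect a genuine obstacle here: the computation follows the standard ``peel off $S$, pass through $T$, reassemble'' pattern of composition theorems for summing operators. The two points that need a little care are the $\varepsilon$-bookkeeping for the infimum defining $\mathfrak{m}_{(s;p)}^{L}$ (together with the harmless remark that a nowhere-vanishing $\tau$ can always be chosen for a finite sequence), and the boundary cases $s=\infty$ or $s=p$ in the Hölder step, where one exponent degenerates to $\infty$ and the inequality collapses to an obvious supremum/$\ell_p$ estimate.
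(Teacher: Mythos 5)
Your proposal is correct and follows essentially the same route as the paper: Hölder's inequality with the $p$-conjugate index $s'(p)$, the defining inequality of $T$ applied to the rescaled sequence $\big(\tfrac{\sigma}{\tau},Sx',Sx''\big)$, and the mixed-norm factorization coming from $S$. The only cosmetic difference is that you pick an $\varepsilon$-near-optimal $\tau$ at the outset, whereas the paper runs the same chain for arbitrary $\tau$ and takes the infimum over $\tau\in\ell_{s'\left(p\right)}$ at the end.
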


\begin{proof}
We recall that for $0<p\leq s\leq\infty$ the index $s'\left(p\right)$ is determined by the equation $$\frac{1}{s'\left(p\right)}+\frac{1}{s}=\frac{1}{p}.$$ Let $\sigma_1,\cdot\cdot\cdot,\sigma_m\in\mathbb{R}$, $x'_1,\cdots, x'_m, x''_1,\cdots,x''_m\in X$ and $m\in\N$. 
The H\"older inequality, the definition of Lipschitz $\left(s,\mathfrak{m}^L\left(s;s\right)\right)-$summing maps, the definition of $\left(\mathfrak{m}^L\left(s;p\right),r\right)-$summing maps and (\ref{three}) naturally come together to give us
\begin{align} \label{manaf}
\norm{\left(\sigma,(T\circ S)x',(T\circ S)x''\right)\Big|\ell_p}&\leq\left\|\tau\Big|\ell_{s'\left(p\right)}\right\|\cdot\left\|\left(\frac{\sigma}{\tau},T(Sx'),T(Sx'')\right)\Big|\ell_s\right\| \nonumber \\
&\leq\pi_{\left(s,\mathfrak{m}^L\left(s;s\right)\right)}^{L}(T)\cdot\left\|\tau\Big|\ell_{s'\left(p\right)}\right\|\cdot\left\|\Big(\frac{\sigma}{\tau},Sx',Sx''\Big)\Big|\ell_s^{L,w}\right\|.  
\end{align}  
Taking the infimum over all $\tau\in\ell_{s'\left(p\right)}$ on the right side of (\ref{manaf}), we get
\begin{align}
\left\|\left(\sigma,(T\circ S)x',(T\circ S)x''\right)\Big|\ell_p\right\|&\leq\pi_{\left(s,\mathfrak{m}^L\left(s;s\right)\right)}^{L}(T)\cdot\mathfrak{m}_{(s;p)}^{L}(\sigma,Sx',Sx'') \nonumber \\
&\leq\pi_{\left(s,\mathfrak{m}^L\left(s;s\right)\right)}^{L}(T)\cdot\pi_{\left(\mathfrak{m}^L\left(s;p\right),r\right)}^{L}(S)\cdot\left\|(\sigma,x',x'')\Big|\ell_r^{L,w}\right\|. \nonumber
\end{align}
Hence $T\circ S\in\Pi_{\left(p,\mathfrak{m}^L\left(r;r\right)\right)}^{L}(X,Z)$. Now it follows from Definition \ref{eight} that $$\pi_{\left (p,\mathfrak{m}^L\left(r;r\right)\right)}^{L}(T\circ S)\leq\pi_{\left(s,\mathfrak{m}^L\left(s;s\right)\right)}^{L}(T)\cdot\pi_{\left(\mathfrak{m}^L\left(s;p\right),r\right)}^{L}(S).$$
\end{proof}
The special case $p=r$ gives.

\begin{Corollary}\label{manaf100}
Let $0<p\leq s\leq\infty$. If $S\in\Pi_{\left(\mathfrak{m}^L\left(s;p\right),p\right)}^{L}(X,Y)$ and $T\in\Pi_{\left(s,\mathfrak{m}^L\left(s;s\right)\right)}^{L}(Y,Z)$, then $T\circ S\in\Pi_{\left(p,\mathfrak{m}^L\left(p;p\right)\right)}^{L}(X,Z)$. Moreover
$$\pi_{\left(p,\mathfrak{m}^L\left(p;p\right)\right)}^{L}(T\circ S)\leq\pi_{\left(s,\mathfrak{m}^L\left(s;s\right)\right)}^{L}(T)\cdot\pi_{\left(\mathfrak{m}^L\left(s;p\right),p\right)}^{L}(S).$$
\end{Corollary}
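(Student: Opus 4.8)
The plan is to obtain Corollary~\ref{manaf100} directly from Theorem~\ref{manaf1} by specializing the parameter $r$. Recall that Theorem~\ref{manaf1} asserts: for $0<p\leq s\leq\infty$ and $p\geq r$, if $S\in\Pi_{\left(\mathfrak{m}^L\left(s;p\right),r\right)}^{L}(X,Y)$ and $T\in\Pi_{\left(s,\mathfrak{m}^L\left(s;s\right)\right)}^{L}(Y,Z)$, then $T\circ S\in\Pi_{\left(p,\mathfrak{m}^L\left(r;r\right)\right)}^{L}(X,Z)$ with the corresponding norm estimate. Since $p\leq p$ trivially holds, the choice $r=p$ is admissible in Theorem~\ref{manaf1}, and all hypotheses and conclusions of the corollary are exactly those of the theorem under this substitution.

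First I would verify the hypotheses match: the condition $0<p\leq s\leq\infty$ is retained verbatim, the side condition $p\geq r$ becomes $p\geq p$, which is automatic, and the membership hypotheses $S\in\Pi_{\left(\mathfrak{m}^L\left(s;p\right),r\right)}^{L}(X,Y)=\Pi_{\left(\mathfrak{m}^L\left(s;p\right),p\right)}^{L}(X,Y)$ and $T\in\Pi_{\left(s,\mathfrak{m}^L\left(s;s\right)\right)}^{L}(Y,Z)$ coincide with those in the corollary. Then I would read off the conclusion: the theorem gives $T\circ S\in\Pi_{\left(p,\mathfrak{m}^L\left(r;r\right)\right)}^{L}(X,Z)=\Pi_{\left(p,\mathfrak{m}^L\left(p;p\right)\right)}^{L}(X,Z)$, and the norm inequality
$$\pi_{\left(p,\mathfrak{m}^L\left(r;r\right)\right)}^{L}(T\circ S)\leq\pi_{\left(s,\mathfrak{m}^L\left(s;s\right)\right)}^{L}(T)\cdot\pi_{\left(\mathfrak{m}^L\left(s;p\right),r\right)}^{L}(S)$$
becomes exactly
$$\pi_{\left(p,\mathfrak{m}^L\left(p;p\right)\right)}^{L}(T\circ S)\leq\pi_{\left(s,\mathfrak{m}^L\left(s;s\right)\right)}^{L}(T)\cdot\pi_{\left(\mathfrak{m}^L\left(s;p\right),p\right)}^{L}(S).$$

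There is essentially no obstacle here; the only thing worth a moment's care is the bookkeeping identity $s'(r)=s'(p)$ when $r=p$ (used implicitly inside the proof of Theorem~\ref{manaf1} via the defining relation $\frac{1}{s'(p)}+\frac{1}{s}=\frac{1}{p}$), but this is immediate from the definition of the $q$-conjugate index and requires no argument. Accordingly, the proof of the corollary is a one-line specialization: \emph{Apply Theorem~\ref{manaf1} with $r=p$.}

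\begin{proof}
This is the special case $r=p$ of Theorem~\ref{manaf1}. Indeed, the condition $p\geq r$ reduces to $p\geq p$, which holds trivially, so Theorem~\ref{manaf1} applies and yields $T\circ S\in\Pi_{\left(p,\mathfrak{m}^L\left(p;p\right)\right)}^{L}(X,Z)$ together with the estimate
$$\pi_{\left(p,\mathfrak{m}^L\left(p;p\right)\right)}^{L}(T\circ S)\leq\pi_{\left(s,\mathfrak{m}^L\left(s;s\right)\right)}^{L}(T)\cdot\pi_{\left(\mathfrak{m}^L\left(s;p\right),p\right)}^{L}(S).$$
\end{proof}
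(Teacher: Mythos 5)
Your proof is correct and coincides with the paper's own treatment: the paper introduces Corollary~\ref{manaf100} precisely as ``the special case $p=r$'' of Theorem~\ref{manaf1}, which is exactly your one-line specialization. Nothing further is needed.
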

 
\begin{Lemma}
Let $0<q\leq s\leq\infty$, $p\leq q$ and let $T$ be a Lipschitz map from $X$ into $Y$. If there is a constant $C_{11}\geq 0$ such that for any probability measure $\nu$ on $B_{{Y}^{\#}}$ there exists a probability measure $\mu$ on $B_{{X}^{\#}}$ such that for every $a$, $b$ in $X$, $$\Bigg[\int\limits_{B_{Y^{\#}}}\left|g(Ta)-g(Tb)\right|^{s}d\upsilon(g)\Bigg]^\frac{1}{s}\leq C_{11}\cdot\Bigg[\int\limits_{B_{X^{\#}}}\left|f(a)-f(b)\right|^{p}d\mu(f)\Bigg]^\frac{1}{p},$$ then $T$ is a Lipschitz $\left(\mathfrak{m}^L\left(s;q\right),p\right)-$summing map. Moreover 
$$\pi_{\left(\mathfrak{m}^L\left(s;q\right),p\right)}^{L}(T)=\inf C_{11}.$$
\end{Lemma}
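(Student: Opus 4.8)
The plan is to show that the pointwise hypothesis relating integrals over $B_{Y^{\#}}$ to integrals over $B_{X^{\#}}$ upgrades, via a discretization argument, to the defining inequality of Definition \ref{seven}, and then to verify the two inequalities $\pi_{(\mathfrak{m}^L(s;q),p)}^{L}(T) \le \inf C_{11}$ and $\inf C_{11} \le \pi_{(\mathfrak{m}^L(s;q),p)}^{L}(T)$ separately. The key tool is Proposition \ref{two}, which expresses $\mathfrak{m}_{(s;q)}^{L}(\sigma,Tx',Tx'')$ as a supremum over $\nu \in W(B_{Y^{\#}})$ of the mixed quantity $\big[\sum_j |\sigma_j|^q (\int_{B_{Y^{\#}}} |g(Tx'_j)-g(Tx''_j)|^s d\nu(g))^{q/s}\big]^{1/q}$.

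First I would prove the sufficiency direction. Fix finite sequences $\sigma_1,\dots,\sigma_m \in \mathbb{R}$ and $x'_1,\dots,x'_m, x''_1,\dots,x''_m \in X$, and fix an arbitrary probability measure $\nu$ on $B_{Y^{\#}}$. By hypothesis choose a probability measure $\mu$ on $B_{X^{\#}}$ with the stated pointwise estimate, and apply it with $a = x'_j$, $b = x''_j$ for each $j$. Raising to the $s$-th power, multiplying by $|\sigma_j|^s$ and summing is not quite the right move since $q \le s$; instead, raise the pointwise inequality to power $q$ (using $p \le q$ to interpolate: $[\int |f(a)-f(b)|^p d\mu]^{q/p} \ge$ is not automatic, so here I must be careful — I would instead keep the exponent $s$ on the left, take the $j$-sum of $|\sigma_j|^q(\cdot)^{q/s}$, and bound each term using $\big(\int_{B_{Y^{\#}}}|g(Tx'_j)-g(Tx''_j)|^s d\nu\big)^{q/s} \le C_{11}^q \big(\int_{B_{X^{\#}}}|f(x'_j)-f(x''_j)|^p d\mu\big)^{q/p}$). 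Summing over $j$ gives
$$
\sum_{j=1}^m |\sigma_j|^q \Big(\int_{B_{Y^{\#}}}|g(Tx'_j)-g(Tx''_j)|^s d\nu\Big)^{q/s} \le C_{11}^q \sum_{j=1}^m \Big(\int_{B_{X^{\#}}}|\sigma_j|^p|f(x'_j)-f(x''_j)|^p d\mu\Big)^{q/p}.
$$
Since $q/p \ge 1$, the right side is dominated by $C_{11}^q \big(\sum_j \int_{B_{X^{\#}}} |\sigma_j|^p |f(x'_j)-f(x''_j)|^p d\mu\big)^{q/p}$ by the monotonicity of $\ell_r$-norms in $r$ (i.e. $\|\cdot\|_{\ell_{q/p}} \le \|\cdot\|_{\ell_1}$), and then by Fubini and $\mu(B_{X^{\#}})=1$ this is at most $C_{11}^q \|(\sigma,x',x'')|\ell_p^{L,w}\|^q$. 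Taking $q$-th roots, then the supremum over all $\nu \in W(B_{Y^{\#}})$, and invoking Proposition \ref{two} yields $\mathfrak{m}_{(s;q)}^{L}(\sigma,Tx',Tx'') \le C_{11}\cdot\|(\sigma,x',x'')|\ell_p^{L,w}\|$, so $T \in \Pi_{(\mathfrak{m}^L(s;q),p)}^{L}(X,Y)$ with $\pi_{(\mathfrak{m}^L(s;q),p)}^{L}(T) \le \inf C_{11}$.

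For the reverse inequality $\inf C_{11} \le \pi_{(\mathfrak{m}^L(s;q),p)}^{L}(T)$, I would run a Ky Fan / Hahn--Banach minimax argument, exactly as in the proof of the Pietsch-type domination theorems in \cite{JA12} and \cite{J09}: given $\nu$ on $B_{Y^{\#}}$, consider on the compact convex set $W(B_{X^{\#}})$ the family of concave (affine) functions indexed by finite data $(a,b)$ in $X$ defined by $\mu \mapsto \int_{B_{Y^{\#}}}|g(Ta)-g(Tb)|^s d\nu - (\pi_{(\mathfrak{m}^L(s;q),p)}^{L}(T))^s\big(\int_{B_{X^{\#}}}|f(a)-f(b)|^p d\mu\big)^{s/p}$; one shows each such function is $\le 0$ at some $\mu$ using the summing inequality applied to a suitable single-term (or repeated) sequence, then invokes Ky Fan's lemma to get one $\mu$ working for all $(a,b)$ simultaneously. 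I expect this minimax step to be the main obstacle, since the exponent $s/p \ge 1$ makes the $\mu$-dependence convex rather than affine; the standard fix is to work with the function $\mu \mapsto \big(\int_{B_{Y^{\#}}}|g(Ta)-g(Tb)|^s d\nu\big)^{p/s} - C^p \int_{B_{X^{\#}}}|f(a)-f(b)|^p d\mu$ (now affine in $\mu$, with $C = \pi_{(\mathfrak{m}^L(s;q),p)}^{L}(T)$), verify the pointwise solvability from the case $m=1$ of Proposition \ref{two} and Definition \ref{seven}, apply Ky Fan, and finally take $p/s$-th roots to recover the claimed form with $C_{11} = \pi_{(\mathfrak{m}^L(s;q),p)}^{L}(T)$. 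Combining the two directions gives $\pi_{(\mathfrak{m}^L(s;q),p)}^{L}(T) = \inf C_{11}$.
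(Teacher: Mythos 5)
Your sufficiency argument is correct and is, up to reordering, the paper's own proof: the paper first passes from the outer exponent $q$ to the outer exponent $p$ by monotonicity of the $\ell_r$-norms (this is where $p\le q$ enters), then applies the two-measure domination termwise with exponent $p$, integrates against the probability measure $\mu$, and concludes by taking the supremum over $\nu\in W(B_{Y^{\#}})$ and invoking Proposition \ref{two}; you instead apply the domination with exponent $q$ first and use $\left\|\cdot\right\|_{\ell_{q/p}}\le\left\|\cdot\right\|_{\ell_1}$ afterwards, which is the same estimate in a different order. This yields $\pi^{L}_{\left(\mathfrak{m}^L\left(s;q\right),p\right)}(T)\le\inf C_{11}$, and that is in fact all the paper's own proof establishes.

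Your second half goes beyond the paper, and it does not close. The paper never proves the reverse inequality $\inf C_{11}\le\pi^{L}_{\left(\mathfrak{m}^L\left(s;q\right),p\right)}(T)$ (its proof stops exactly where your first paragraph stops), and your Ky Fan sketch as written has a concrete gap: to verify the solvability hypothesis of the minimax lemma you must show that every finite convex combination of your affine functions is nonpositive at some $\mu$, i.e. that for each fixed $\nu$ the map $x\mapsto\big(g\mapsto g(Tx)\big)$ from $X$ into $L_s(\nu)$ is Lipschitz $p$-summing with constant $\pi^{L}_{\left(\mathfrak{m}^L\left(s;q\right),p\right)}(T)$. That is an $\ell_p$-aggregated inequality over arbitrary finite weighted families and cannot be extracted from the case $m=1$; worse, for $p<q$ what Definition \ref{seven} together with Proposition \ref{two} gives you for a fixed $\nu$ is only the $\ell_q$-aggregated inequality $\big[\sum_j|\sigma_j|^{q}\big(\int_{B_{Y^{\#}}}|g(Ta_j)-g(Tb_j)|^{s}d\nu\big)^{q/s}\big]^{1/q}\le\pi^{L}_{\left(\mathfrak{m}^L\left(s;q\right),p\right)}(T)\cdot\big\|(\sigma,a,b)\big|\ell_p^{L,w}\big\|$, which (since $q\ge p$) is strictly weaker than the $\ell_p$-aggregated statement the minimax argument needs. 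So the equality in the ``Moreover'' is, both in your proposal and in the paper, only proved in the direction $\pi\le\inf C_{11}$; a genuine converse would require a domination theorem of the type proved for Lipschitz mixing operators in \cite{JA12}, which covers the case $p=q$ but is not obtained by the route you sketch.
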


\begin{proof}
Let $\eta$ be an arbitrary sequence in $\mathbb{R}$. By the assumptions, we have
\begin{align}\label{five}
\Bigg[\sum\limits_{j=1}^{m}\bigg[\int\limits_{B_{Y^{\#}}}\left|\eta_j\right|^{s}\left|g(Ta_j)-g(Tb_j)\right|^{s}d\nu(g)\bigg]^\frac{q}{s}\Bigg]^\frac{1}{q}&\leq\Bigg[\sum\limits_{j=1}^{m}\bigg[\int\limits_{B_{Y^{\#}}}\left|\eta_j\right|^{s}\left|g(Ta_j)-g(Tb_j)\right|^{s}d\nu(g)\bigg]^\frac{p}{s}\Bigg]^\frac{1}{p}\nonumber \\
&\leq C_{11}\cdot\Bigg[\sum\limits_{j=1}^{m}\int\limits_{B_{X^{\#}}}\left|\eta_j\right|^{p}\left|f(a_j)-f(b_j)\right|^{p}d\mu(f)\Bigg]^\frac{1}{p}\nonumber \\
&\leq C_{11}\cdot\left\|(\eta,a,b)\Big|\ell_p^{L,w}\right\|.
\end{align}
Taking the supremum over $\nu\in B_{Y^{\#}}$ on the left side of (\ref{five}) and from Proposition \ref{two}, we get
$$\mathfrak{m}_{(s;q)}^{L}(\eta,Ta,Tb)\leq C_{11}\cdot\left\|(\eta,a,b)\Big|\ell_p^{L,w}\right\|$$
\end{proof} 

\begin{Proposition}\label{hous}
Let $0<q\leq r\leq t\leq\infty$. If $S$ from $Y$ into $Z$ is a Lipschitz $\left(\mathfrak{m}^L\left(t;s\right),r\right)-$summing map and $T$ from $X$ into $Y$ is a Lipschitz $\left(\mathfrak{m}^L\left(r;p\right),q\right)-$summing map, then $S\circ T$ from $X$ into $Z$ is a Lipschitz $\left(\mathfrak{m}^L\left(t;p\right),q\right)-$summing map. Moreover $$\pi_{\left(\mathfrak{m}^L\left(t;p\right),q\right)}^{L}(S\circ T)\leq\pi_{\left(\mathfrak{m}^L\left(t;s\right),r\right)}^{L}(S)\cdot\pi_{\left(\mathfrak{m}^L\left(r;p\right),q\right)}^{L}(T).$$
\end{Proposition}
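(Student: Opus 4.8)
The plan is to mimic the proof of Theorem~\ref{manaf1}: push a fixed finite Lipschitz weakly $q$-summable sequence from $X$ through the intermediate space $Y$, apply the two summing hypotheses one after the other, and then fuse the two scalar multiplier sequences delivered by the definition of the mixed norm into one by H\"older's inequality. Since the defining inequality of a Lipschitz $\left(\mathfrak{m}^{L}(s;q),p\right)$-summing map uses only finite sequences, fix $\sigma_1,\dots,\sigma_m\in\mathbb{R}$ and $x'_1,\dots,x'_m,x''_1,\dots,x''_m\in X$, abbreviate the corresponding triple sequence by $(\sigma,x',x'')$, and write $r'(p),t'(s),t'(p)$ for the conjugate indices coming from the rule $\frac{1}{s'(q)}+\frac{1}{s}=\frac{1}{q}$. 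Note that the hypothesis $0<q\le r\le t$ together with Definition~\ref{seven} (which forces $p\le q$ in each of the three classes appearing) yields the full chain $q\le p\le r\le s\le t$, and this ordering is exactly what the index arithmetic below will use.

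First I would invoke $T\in\Pi_{\left(\mathfrak{m}^{L}(r;p),q\right)}^{L}(X,Y)$: the sequence $(\sigma,Tx',Tx'')$ is Lipschitz mixed $(r;p)$-summable and $\mathfrak{m}_{(r;p)}^{L}(\sigma,Tx',Tx'')\le\pi_{\left(\mathfrak{m}^{L}(r;p),q\right)}^{L}(T)\cdot\left\|(\sigma,x',x'')\Big|\ell_q^{L,w}\right\|$. Since the product in (\ref{three}) is invariant under rescaling $\tau$, for $\varepsilon>0$ I may choose a finite scalar sequence $\tau$ with $\left\|\tau\Big|\ell_{r'(p)}\right\|=1$ and $\left\|(\frac{\sigma}{\tau},Tx',Tx'')\Big|\ell_r^{L,w}\right\|\le\mathfrak{m}_{(r;p)}^{L}(\sigma,Tx',Tx'')+\varepsilon$. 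Now $(\frac{\sigma}{\tau},Tx',Tx'')$ is a finite Lipschitz weakly $r$-summable sequence in $Y$, so $S\in\Pi_{\left(\mathfrak{m}^{L}(t;s),r\right)}^{L}(Y,Z)$ makes $(\frac{\sigma}{\tau},STx',STx'')$ Lipschitz mixed $(t;s)$-summable with $\mathfrak{m}_{(t;s)}^{L}(\frac{\sigma}{\tau},STx',STx'')\le\pi_{\left(\mathfrak{m}^{L}(t;s),r\right)}^{L}(S)\cdot\left\|(\frac{\sigma}{\tau},Tx',Tx'')\Big|\ell_r^{L,w}\right\|$; again by (\ref{three}) I may choose a finite scalar sequence $\omega$ with $\left\|\omega\Big|\ell_{t'(s)}\right\|=1$ and $\left\|(\frac{\sigma}{\tau\omega},STx',STx'')\Big|\ell_t^{L,w}\right\|$ at most $\mathfrak{m}_{(t;s)}^{L}(\frac{\sigma}{\tau},STx',STx'')+\varepsilon$.

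The recombination is the decisive step. Put $\theta:=\tau\cdot\omega$ coordinatewise; then $\frac{\sigma}{\theta}=\frac{\sigma}{\tau\omega}$, so the last estimate says $(\frac{\sigma}{\theta},STx',STx'')\in\ell_t^{L,w}(\mathbb{R}\times Z\times Z)$, and hence $\theta$ exhibits $(\sigma,STx',STx'')$ as Lipschitz mixed $(t;p)$-summable \emph{once} one shows $\theta\in\ell_{t'(p)}$ with $\left\|\theta\Big|\ell_{t'(p)}\right\|\le\left\|\tau\Big|\ell_{r'(p)}\right\|\left\|\omega\Big|\ell_{t'(s)}\right\|=1$. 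This is a H\"older inequality for a product of two scalar sequences, and it is the only step that is not purely formal: one has to verify that the conjugation relations for $r'(p)$, $t'(s)$, $t'(p)$ fit together, i.e. that the second exponents $p\le r\le s$ and the weak exponents $r\le s\le t$ interlock so that the product multiplier lands in $\ell_{t'(p)}$ with the right norm. I expect this index bookkeeping, carried out under the chain $q\le p\le r\le s\le t$, to be the main obstacle; should the bare H\"older step not close, the fallback is to re-derive a factorization of $\theta$ by hand (splitting $\omega$ further, or switching to the measure description of $\mathfrak{m}_{(s;q)}^{L}$ from Proposition~\ref{two}). Granting this, (\ref{three}) gives $\mathfrak{m}_{(t;p)}^{L}(\sigma,STx',STx'')\le\left\|\theta\Big|\ell_{t'(p)}\right\|\cdot\left\|(\frac{\sigma}{\theta},STx',STx'')\Big|\ell_t^{L,w}\right\|$; inserting the two estimates of the previous paragraph and letting $\varepsilon\downarrow 0$ yields
$$\mathfrak{m}_{(t;p)}^{L}(\sigma,STx',STx'')\le\pi_{\left(\mathfrak{m}^{L}(t;s),r\right)}^{L}(S)\cdot\pi_{\left(\mathfrak{m}^{L}(r;p),q\right)}^{L}(T)\cdot\left\|(\sigma,x',x'')\Big|\ell_q^{L,w}\right\|,$$
which by Definition~\ref{seven} means $S\circ T\in\Pi_{\left(\mathfrak{m}^{L}(t;p),q\right)}^{L}(X,Z)$ with $\pi_{\left(\mathfrak{m}^{L}(t;p),q\right)}^{L}(S\circ T)\le\pi_{\left(\mathfrak{m}^{L}(t;s),r\right)}^{L}(S)\cdot\pi_{\left(\mathfrak{m}^{L}(r;p),q\right)}^{L}(T)$, completing the proof.
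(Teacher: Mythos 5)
Your proposal follows the same route as the paper's own proof: feed the weak $\ell_q$ sequence through $T$, apply $S$ to the rescaled sequence, and fuse the two multiplier sequences $\tau\in\ell_{r'(p)}$ and $\omega\in\ell_{t'(s)}$ into one multiplier for the composition by a H\"older estimate. The only difference is presentational: you run the argument forward with $\varepsilon$-near-optimal choices of $\tau$ and $\omega$, while the paper writes $\mathfrak{m}^{L}_{(t;p)}\big(\sigma,(S\circ T)x',(S\circ T)x''\big)$ as an infimum over factorizations $\tau_1\cdot\tau_2$ and peels off the two infima in turn.

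The genuine problem is that the one step you explicitly leave "granted" is exactly the step that does not close under the stated hypotheses. You need $\left\|\tau\cdot\omega\Big|\ell_{t'(p)}\right\|\leq\left\|\tau\Big|\ell_{r'(p)}\right\|\cdot\left\|\omega\Big|\ell_{t'(s)}\right\|$, and H\"older yields such a bound precisely when $\frac{1}{t'(p)}\leq\frac{1}{r'(p)}+\frac{1}{t'(s)}$. But $\frac{1}{r'(p)}+\frac{1}{t'(s)}=\left(\frac{1}{p}-\frac{1}{r}\right)+\left(\frac{1}{s}-\frac{1}{t}\right)=\frac{1}{t'(p)}-\left(\frac{1}{r}-\frac{1}{s}\right)$, and the chain $q\leq p\leq r\leq s\leq t$ that you yourself record forces $\frac{1}{r}-\frac{1}{s}\geq 0$. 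Hence the exponent $u$ with $\frac{1}{u}=\frac{1}{r'(p)}+\frac{1}{t'(s)}$ satisfies $u\geq t'(p)$, so H\"older controls only the smaller norm $\left\|\tau\omega\Big|\ell_{u}\right\|$ and gives no bound on $\left\|\tau\omega\Big|\ell_{t'(p)}\right\|$; what the construction actually proves is that $S\circ T$ is Lipschitz $\left(\mathfrak{m}^{L}(t;\tilde p),q\right)$-summing with $\frac{1}{\tilde p}=\frac{1}{p}+\frac{1}{s}-\frac{1}{r}$, which coincides with the claimed class only when $r=s$ (the case of the corollary that follows the proposition, where the bare H\"older step does close). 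Your suggested fallbacks (splitting $\omega$ further, or invoking the measure description of Proposition \ref{two}) do not repair this. For what it is worth, the paper's proof is in the same position: at this point it simply asserts the identity $\frac{1}{t'(s)}+\frac{1}{r'(p)}=\frac{1}{t'(p)}$, which likewise holds only when $r=s$. So your instinct that the index bookkeeping is the main obstacle was correct, but leaving it unverified is a real gap, not a formality, and in the generality claimed it cannot be filled by the H\"older step alone.
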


\begin{proof}
From Definition \ref{car}, we have
\begin{align}
\mathfrak{m}_{(t;p)}^{L}\big(\sigma,(S\circ T)x',(S\circ T) x''\big)&=\inf\limits_{\tau}\left\|\tau\Big|\ell_{t'\left(p\right)}\right\|\left\|\Big(\frac{\sigma}{\tau},(S\circ T)x',(S\circ T)x''\Big)\Big|\ell_t^{L,w}\right\|\nonumber \\
&=\inf\limits_{\tau_{1}\cdot\tau_{2}}\left\|\tau_{1}\cdot\tau_{2}\Big|\ell_{t'\left(p\right)}\right\|\left\|\Big(\frac{\sigma}{\tau_{1}\cdot\tau_{2}},(S\circ T)x',(S\circ T)x''\Big)\Big|\ell_t^{L,w}\right\|.\nonumber
\end{align}
Let $\sigma'=\frac{\sigma}{\tau_{1}}$. Since $\frac{1}{t'\left(s\right)}+\frac{1}{r'\left(p\right)}=\frac{1}{t'\left(p\right)}$ with the H\"older inequality give us
\begin{align}
\mathfrak{m}_{(t;p)}^{L}\big(\sigma,(S\circ T)x',(S\circ T) x''\big)&\leq\inf\limits_{\tau_{1}\cdot\tau_{2}}\left\|\tau_{1}\Big|\ell_{r'\left(p\right)}\right\|\cdot\left\|\tau_{2}\Big|\ell_{t'\left(s\right)}\right\|\left\|\Big(\frac{\sigma'}{\tau_{2}},S(Tx'),S(Tx'')\Big)\Big|\ell_t^{L,w}\right\|\nonumber \\
&=\inf\limits_{\tau_{1}}\left\|\tau_{1}\Big|\ell_{r'\left(p\right)}\right\|\cdot\inf\limits_{\tau_{2}}\left\|\tau_{2}\Big|\ell_{t'\left(s\right)}\right\|\left\|\Big(\frac{\sigma'}{\tau_{2}},S(Tx'),S(Tx'')\Big)\Big|\ell_t^{L,w}\right\|\nonumber \\
&=\inf\limits_{\tau_{1}}\left\|\tau_{1}\Big|\ell_{r'\left(p\right)}\right\|\cdot\mathfrak{m}_{(t;s)}^{L}\big(\sigma',S(Tx'),S(Tx'')\big)\nonumber \\
&\leq\pi_{\left(\mathfrak{m}^L\left(t;s\right),r\right)}^{L}(S)\cdot\inf\limits_{\tau_{1}}\left\|\tau_{1}\Big|\ell_{r'\left(p\right)}\right\|\cdot\left\|(\sigma',Tx',Tx'')\Big|\ell_r^{L,w}\right\|\nonumber \\
&=\pi_{\left(\mathfrak{m}^L\left(t;s\right),r\right)}^{L}(S)\cdot\mathfrak{m}_{(r;p)}^{L}(\sigma,Tx',Tx'')\nonumber \\
&\leq\pi_{\left(\mathfrak{m}^L\left(t;s\right),r\right)}^{L}(S)\cdot\pi_{\left(\mathfrak{m}^L\left(r;p\right),q\right)}^{L}(T)\cdot\left\|(\sigma,x',x'')\Big|\ell_q^{L,w}\right\|.\nonumber
\end{align}

Finally, it follows from Definition \ref{seven} that $$\pi_{\left(\mathfrak{m}^L\left(t;p\right),q\right)}^{L}(S\circ T)\leq\pi_{\left(\mathfrak{m}^L\left(t;s\right),r\right)}^{L}(S)\cdot\pi_{\left(\mathfrak{m}^L\left(r;p\right),q\right)}^{L}(T).$$
\end{proof}

In the case that $s=r$ and $p=q$, this result gives

\begin{Corollary}
Let $0<p\leq s\leq t\leq\infty$. If $S$ from $Y$ into $Z$ is a Lipschitz $\left(\mathfrak{m}^L\left(t;s\right),s\right)-$summing map and $T$ from $X$ into $Y$ is a Lipschitz $\left(\mathfrak{m}^L\left(s;p\right),p\right)-$summing map, then $S\circ T$ from $X$ into $Z$ is a Lipschitz $\left(\mathfrak{m}^L\left(t;p\right),p\right)-$summing map. Moreover $$\pi_{\left(\mathfrak{m}^L\left(t;p\right),p\right)}^{L}(S\circ T)\leq\pi_{\left(\mathfrak{m}^L\left(t;s\right),s\right)}^{L}(S)\cdot\pi_{\left(\mathfrak{m}^L\left(s;p\right),p\right)}^{L}(T).$$
\end{Corollary}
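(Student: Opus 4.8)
The plan is to obtain this corollary as a direct specialization of Proposition~\ref{hous}. Recall that the proposition is stated for parameters $0<q\leq r\leq t\leq\infty$ and produces a composition result for a Lipschitz $\left(\mathfrak{m}^L\left(t;s\right),r\right)$-summing map $S$ composed with a Lipschitz $\left(\mathfrak{m}^L\left(r;p\right),q\right)$-summing map $T$, yielding a Lipschitz $\left(\mathfrak{m}^L\left(t;p\right),q\right)$-summing map $S\circ T$ with the norm estimate $\pi_{\left(\mathfrak{m}^L\left(t;p\right),q\right)}^{L}(S\circ T)\leq\pi_{\left(\mathfrak{m}^L\left(t;s\right),r\right)}^{L}(S)\cdot\pi_{\left(\mathfrak{m}^L\left(r;p\right),q\right)}^{L}(T)$. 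The first step is simply to set $s=r$ and $q=p$ in that statement: the hypothesis chain $0<q\leq r\leq t\leq\infty$ becomes $0<p\leq s\leq t\leq\infty$, exactly the hypothesis of the corollary.

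Next I would carry out the substitutions in the three operator classes. With $s=r$, the map $S$ becomes Lipschitz $\left(\mathfrak{m}^L\left(t;r\right),r\right)$-summing, i.e. $\left(\mathfrak{m}^L\left(t;s\right),s\right)$-summing after renaming $r$ back to $s$; with $r=s$ and $q=p$, the map $T$ becomes Lipschitz $\left(\mathfrak{m}^L\left(s;p\right),p\right)$-summing; and the conclusion class $\left(\mathfrak{m}^L\left(t;p\right),q\right)$ becomes $\left(\mathfrak{m}^L\left(t;p\right),p\right)$-summing. The norm inequality transforms accordingly into $\pi_{\left(\mathfrak{m}^L\left(t;p\right),p\right)}^{L}(S\circ T)\leq\pi_{\left(\mathfrak{m}^L\left(t;s\right),s\right)}^{L}(S)\cdot\pi_{\left(\mathfrak{m}^L\left(s;p\right),p\right)}^{L}(T)$, which is precisely the assertion of the corollary.

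The only point that requires a moment's care is whether the index relations used inside the proof of Proposition~\ref{hous} remain valid under this specialization. The proof invokes the identity $\tfrac{1}{t'(s)}+\tfrac{1}{r'(p)}=\tfrac{1}{t'(p)}$, which comes from the defining equations $\tfrac{1}{t'(s)}+\tfrac{1}{t}=\tfrac{1}{s}$, $\tfrac{1}{r'(p)}+\tfrac{1}{r}=\tfrac{1}{p}$ and $\tfrac{1}{t'(p)}+\tfrac{1}{t}=\tfrac{1}{p}$; with $r=s$ these still combine correctly since $\tfrac{1}{s}$ cancels in the sum, so the H\"older step goes through verbatim. Consequently no new argument is needed, and I expect no genuine obstacle — the corollary is an immediate consequence of the more general proposition, and a one-line proof pointing to the appropriate parameter choice suffices.

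\begin{proof}
Apply Proposition~\ref{hous} with the choice $s=r$ and $q=p$. Then the hypothesis $0<q\leq r\leq t\leq\infty$ becomes $0<p\leq s\leq t\leq\infty$, the map $S$ is Lipschitz $\left(\mathfrak{m}^L\left(t;s\right),s\right)-$summing, the map $T$ is Lipschitz $\left(\mathfrak{m}^L\left(s;p\right),p\right)-$summing, and the conclusion of Proposition~\ref{hous} yields that $S\circ T$ is Lipschitz $\left(\mathfrak{m}^L\left(t;p\right),p\right)-$summing with
$$\pi_{\left(\mathfrak{m}^L\left(t;p\right),p\right)}^{L}(S\circ T)\leq\pi_{\left(\mathfrak{m}^L\left(t;s\right),s\right)}^{L}(S)\cdot\pi_{\left(\mathfrak{m}^L\left(s;p\right),p\right)}^{L}(T).$$
\end{proof}
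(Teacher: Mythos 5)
Your proof is correct and matches the paper exactly: the paper obtains this corollary by specializing Proposition \ref{hous} to the case $s=r$ and $p=q$, which is precisely your argument. Nothing further is needed.
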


An interesting inclusion result that will be used later is the following.

\begin{thm}\label{d}
If $0<p\leq s$, then $\Pi_{\left(p,\mathfrak{m}^L\left(s;p\right)\right)}^{L}(X,Y)\subset\Pi_{\left(s,\mathfrak{m}^L\left(s;s\right)\right)}^{L}(X,Y).$ Moreover $$\pi_{\left(s,\mathfrak{m}^L\left(s;s\right)\right)}^{L}(T)\leq\pi_{\left(p,\mathfrak{m}^L\left(s;p\right)\right)}^{L}(T)$$ for every $T\in\Pi_{\left(p,\mathfrak{m}^L\left(s;p\right)\right)}^{L}(X,Y)$.
\end{thm}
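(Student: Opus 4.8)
The plan is to show directly that the defining inequality of a Lipschitz $\left(s,\mathfrak{m}^L\left(s;s\right)\right)$-summing map follows from that of a Lipschitz $\left(p,\mathfrak{m}^L\left(s;p\right)\right)$-summing map, exploiting the fact that when the inner and outer exponents of the mixing norm coincide (the case $s=q$), the mixed norm $\mathfrak{m}_{(s;s)}^{L}$ collapses to the weak Lipschitz $\ell_s$-norm, as recorded in Concluding Remarks~\ref{66}(2). Concretely, fix a Lipschitz map $T\in\Pi_{\left(p,\mathfrak{m}^L\left(s;p\right)\right)}^{L}(X,Y)$, finite sequences $x',x''$ in $X$ and $\sigma$ in $\mathbb R$. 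What must be proved is
$$
\norm{(\sigma,Tx',Tx'')\Big|\ell_s}\leq\pi_{\left(p,\mathfrak{m}^L\left(s;p\right)\right)}^{L}(T)\cdot\norm{(\sigma,x',x'')\Big|\ell_s^{L,w}},
$$
since by Concluding Remarks~\ref{66}(2) the right-hand factor equals $\mathfrak{m}_{(s;s)}^{L}(\sigma,x',x'')$ and the left-hand side equals $\norm{(\sigma,Tx',Tx'')\Big|\ell_s}$, which (again by the same remark, applied in $Y$) is $\mathfrak{m}_{(s;s)}^{L}(\sigma,Tx',Tx'')$.

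First I would invoke the hypothesis $T\in\Pi_{\left(p,\mathfrak{m}^L\left(s;p\right)\right)}^{L}(X,Y)$, which by Definition~\ref{eight} gives
$$
\norm{(\sigma,Tx',Tx'')\Big|\ell_p}\leq\pi_{\left(p,\mathfrak{m}^L\left(s;p\right)\right)}^{L}(T)\cdot\mathfrak{m}_{(s;p)}^{L}(\sigma,x',x'').
$$
The key step is then to relate $\mathfrak{m}_{(s;p)}^{L}(\sigma,x',x'')$ to $\norm{(\sigma,x',x'')\Big|\ell_s^{L,w}}$ in a way that also upgrades the outer $\ell_p$-norm on the image side to an $\ell_s$-norm. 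The clean route is to apply the inequality above not to the sequence $(\sigma,x',x'')$ itself but to a rescaled sequence $(\lambda\sigma,x',x'')$ where $\lambda=(\lambda_j)$ is a suitable positive weight with $\lambda\in\ell_{s'(p)}$ (the index with $1/s'(p)+1/s=1/p$), and then optimize over $\lambda$. For a single sequence in $\ell_s^{L,w}$, taking $\tau=\lambda$ in the definition of $\mathfrak{m}_{(s;p)}^{L}$ shows $\mathfrak{m}_{(s;p)}^{L}(\lambda\sigma,x',x'')\leq\norm{\lambda\Big|\ell_{s'(p)}}\cdot\norm{(\sigma,x',x'')\Big|\ell_s^{L,w}}$, while on the image side $\norm{(\lambda\sigma,Tx',Tx'')\Big|\ell_p}$ is, by Hölder with exponents $s/p$ and $s'(p)/p$, bounded below in the right direction only after choosing $\lambda_j$ proportional to a power of $|\sigma_j|d_Y(Tx'_j,Tx''_j)$; the standard duality/extremizer computation for the Hölder inequality then recovers exactly $\norm{(\sigma,Tx',Tx'')\Big|\ell_s}$ on the left and $\norm{\lambda\Big|\ell_{s'(p)}}$ times $\norm{(\sigma,x',x'')\Big|\ell_s^{L,w}}$ on the right, with the two copies of $\norm{\lambda\Big|\ell_{s'(p)}}$ cancelling after normalizing $\lambda$. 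This yields the desired estimate with constant $\pi_{\left(p,\mathfrak{m}^L\left(s;p\right)\right)}^{L}(T)$, hence $T\in\Pi_{\left(s,\mathfrak{m}^L\left(s;s\right)\right)}^{L}(X,Y)$ with $\pi_{\left(s,\mathfrak{m}^L\left(s;s\right)\right)}^{L}(T)\leq\pi_{\left(p,\mathfrak{m}^L\left(s;p\right)\right)}^{L}(T)$.

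The main obstacle I anticipate is the bookkeeping in the rescaling-and-optimization step: one must verify that the extremal weight $\lambda$ indeed lies in $\ell_{s'(p)}$ (harmless for finite sequences, but the normalization must be handled if some $d_Y(Tx'_j,Tx''_j)$ vanish), and that the Hölder inequality is applied in the correct direction so that the factor $\norm{\lambda\Big|\ell_{s'(p)}}$ appears with matching exponents on both sides and cancels rather than accumulates. An alternative, perhaps cleaner, packaging avoids the explicit extremizer: since $\mathfrak{M}_{(s;p)}^{L}\subset\ell_{s}^{L,w}$ does not hold in general, one instead uses the reverse — that $\ell_{s}^{L,w}=\mathfrak{M}_{(s;s)}^{L}\subset\mathfrak{M}_{(s;p)}^{L}$ with $\mathfrak{m}_{(s;p)}^{L}(\sigma,x',x'')\leq\mathfrak{m}_{(s;s)}^{L}(\sigma,x',x'')=\norm{(\sigma,x',x'')\Big|\ell_s^{L,w}}$, which is immediate from taking $\tau$ a constant unit sequence (valid here since all sequences are finite) in Definition~\ref{car}; combined with the inclusion $\Pi_{\left(p,\mathfrak{m}^L\left(s;p\right)\right)}^{L}\subset\Pi_{\left(s,\mathfrak{m}^L\left(s;p\right)\right)}^{L}$ from Concluding Remarks~\ref{66}(4) (monotonicity in the outer exponent) and the observation that the $\ell_p$ norm on a finitely supported image can be pushed up to $\ell_s$ only via the mixing structure, the composition collapses to exactly the statement. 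I would present the first, self-contained argument in full, as it is the most transparent and does not rely on the subtler inclusion lemmas.
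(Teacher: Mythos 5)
Your main argument is correct and is essentially the paper's own proof: applying the $\left(p,\mathfrak{m}^L\left(s;p\right)\right)$-summing inequality to a weighted sequence, bounding $\mathfrak{m}_{(s;p)}^{L}$ of that weighted sequence by $\left\|\lambda\Big|\ell_{s'(p)}\right\|\cdot\left\|(\sigma,x',x'')\Big|\ell_s^{L,w}\right\|$ via $\tau=\lambda$, and then optimizing over the weight by H\"older duality to upgrade $\ell_p$ to $\ell_s$ on the image side is exactly what the paper does, with your $\lambda$ corresponding to the paper's $\left|\alpha\right|^{1/p}$ and your extremizer step corresponding to its supremum over the unit ball of $\ell_{s'(p)/p}$. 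Only a caution about your discarded ``alternative packaging'': the inequality $\mathfrak{m}_{(s;p)}^{L}(\sigma,x',x'')\leq\mathfrak{m}_{(s;s)}^{L}(\sigma,x',x'')$ is false in general (the mixed norm increases as the second index decreases, and a constant unit $\tau$ of length $m$ has $\ell_{s'(p)}$-norm $m^{1/s'(p)}$, not $1$), so that route would not work --- but since you present the first argument, the proof stands.
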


\begin{proof}
Let $T$ be an arbitrary operator in $\Pi_{\left(p,\mathfrak{m}^L\left(s;p\right)\right)}^{L}(X,Y)$; $\sigma_j, \alpha_j\in\mathbb{R}$; $x'_j$, $x''_j\in X$; $j=1,\cdot\cdot\cdot,m$ and $m\in\mathbb{N}$. We have
\begin{align}\label{salah}
\sum\limits_{j=1}^{m}\left|\alpha_j\right|\left|\sigma_j\right|^{p} d_Y(Tx'_j,Tx''_j)^{p}&=\sum\limits_{j=1}^{m}\left|\left|\alpha_j\right|^\frac{1}{p}\cdot\sigma_j\right|^{p} d_Y(Tx'_j,Tx''_j)^{p} \nonumber \\ 
&\leq\Big[\pi_{\left(p,\mathfrak{m}^L\left(s;p\right)\right)}^{L}(T)\Big]^p\cdot\Big[\mathfrak{m}_{(s;p)}^{L}\big(\left|\alpha\right|^\frac{1}{p}\cdot\sigma,x',x''\big)\Big]^p \nonumber \\ 
&\leq\Big[\pi_{\left(p,\mathfrak{m}^L\left(s;p\right)\right)}^{L}(T)\Big]^{p}\cdot\norm{\alpha\Big|\ell_\frac{s'\left(p\right)}{p}}\cdot\norm{(\sigma,x',x'')\Big|\ell_s^{L,w}}^{p}. 
\end{align}
Taking the supremum over all such $\alpha$ with $\cdot\norm{\alpha\Big|\ell_\frac{s'\left(p\right)}{p}}\leq 1$ on the both sides of (\ref{salah}), we have
$$\Bigg[\sum\limits_{j=1}^{m}\bigg[\left|\sigma_j\right|^{p} d_Y(Tx'_j,Tx''_j)^{p}\bigg]^\frac{s}{p}\Bigg]^\frac{p}{s}\leq\Big[\pi_{\left(p,\mathfrak{m}^L\left(s;p\right)\right)}^{L}(T)\Big]^p\cdot\left\|(\sigma,x',x'')\Big|\ell_s^{L,w}\right\|^{p}.$$ 
But this implies that
$$\left\|(\sigma,Tx',Tx'')\Big|\ell_s\right\|\leq\pi_{\left(p,\mathfrak{m}^L\left(s;p\right)\right)}^{L}(T)\cdot\left\|(\sigma,x',x'')\Big|\ell_s^{L,w}\right\|.$$ Hence $T$ is a Lipschitz $\left(s,\mathfrak{m}^L\left(s;s\right)\right)-$summing map. Finally, it follows from Definition \ref{eight} that $$\pi_{\left(s,\mathfrak{m}^L\left(s;s\right)\right)}^{L}(T)\leq\pi_{\left(p,\mathfrak{m}^L\left(s;p\right)\right)}^{L}(T).$$
\end{proof} 

More generally, we have the following composition result between Lipschitz $\left(p,\mathfrak{m}^L\left(s;q\right)\right)$ and Lipschitz $\left(\mathfrak{m}^L\left(s;q\right),r\right)-$summing maps.

\begin{Proposition}
Let $0<q\leq s\leq\infty$, $q\geq r$ and $p\geq q$. If $S\in\Pi_{\left(\mathfrak{m}^L\left(s;q\right),r\right)}^{L}(X,Y)$ and $T\in\Pi_{\left(p,\mathfrak{m}^L\left(s;q\right)\right)}^{L}(Y,Z)$, then $T\circ S\in\Pi_{\left(p,\mathfrak{m}^L\left(r;r\right)\right)}^{L}(X,Z)$. Moreover
$$\pi_{\left(p,\mathfrak{m}^L\left(r;r\right)\right)}^{L}(T\circ S)\leq\pi_{\left(p,\mathfrak{m}^L\left(s;q\right)\right)}^{L}(T)\cdot\pi_{\left(\mathfrak{m}^L\left(s;q\right),r\right)}^{L}(S).$$
\end{Proposition}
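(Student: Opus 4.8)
The plan is to chain the two hypotheses together using the definition of Lipschitz mixed $(s;q)$-summable sequences, in direct parallel with the proof of Proposition \ref{hous}. Fix finite sequences $\sigma_1,\dots,\sigma_m\in\mathbb{R}$, $x'_1,\dots,x'_m$, $x''_1,\dots,x''_m\in X$. By Definition \ref{eight} applied to $T\circ S$, what must be bounded is $\norm{(\sigma,(T\circ S)x',(T\circ S)x'')\Big|\ell_p}$, and since $T\in\Pi_{\left(p,\mathfrak{m}^L\left(s;q\right)\right)}^{L}(Y,Z)$ we immediately get
\[
\norm{(\sigma,(T\circ S)x',(T\circ S)x'')\Big|\ell_p}\leq\pi_{\left(p,\mathfrak{m}^L\left(s;q\right)\right)}^{L}(T)\cdot\mathfrak{m}_{(s;q)}^{L}(\sigma,Sx',Sx'').
\]
Then, because $S\in\Pi_{\left(\mathfrak{m}^L\left(s;q\right),r\right)}^{L}(X,Y)$, Definition \ref{seven} gives $\mathfrak{m}_{(s;q)}^{L}(\sigma,Sx',Sx'')\leq\pi_{\left(\mathfrak{m}^L\left(s;q\right),r\right)}^{L}(S)\cdot\norm{(\sigma,x',x'')\Big|\ell_r^{L,w}}$. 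Composing these two inequalities yields
\[
\norm{(\sigma,(T\circ S)x',(T\circ S)x'')\Big|\ell_p}\leq\pi_{\left(p,\mathfrak{m}^L\left(s;q\right)\right)}^{L}(T)\cdot\pi_{\left(\mathfrak{m}^L\left(s;q\right),r\right)}^{L}(S)\cdot\norm{(\sigma,x',x'')\Big|\ell_r^{L,w}}.
\]

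It remains to recognize the right-hand side as the defining inequality for membership in $\Pi_{\left(p,\mathfrak{m}^L\left(r;r\right)\right)}^{L}(X,Z)$. Here I would invoke the observation from Concluding Remarks \ref{66}(2) that $\mathfrak{M}_{(r;r)}^{L}(\mathbb{R}\times X\times X)=\ell_r^{L,w}(\mathbb{R}\times X\times X)$ with $\mathfrak{m}_{(r;r)}^{L}(\sigma,x',x'')=\norm{(\sigma,x',x'')\Big|\ell_r^{L,w}}$, so that $\norm{(\sigma,x',x'')\Big|\ell_r^{L,w}}=\mathfrak{m}_{(r;r)}^{L}(\sigma,x',x'')$. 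Substituting this into the chained inequality shows exactly that $T\circ S$ satisfies the inequality (\ref{flat2}) with $s=q=r$ and the exponent on the left equal to $p$, i.e. $T\circ S\in\Pi_{\left(p,\mathfrak{m}^L\left(r;r\right)\right)}^{L}(X,Z)$, and taking the infimum over admissible constants (equivalently, reading off Definition \ref{eight}) gives the stated norm estimate $\pi_{\left(p,\mathfrak{m}^L\left(r;r\right)\right)}^{L}(T\circ S)\leq\pi_{\left(p,\mathfrak{m}^L\left(s;q\right)\right)}^{L}(T)\cdot\pi_{\left(\mathfrak{m}^L\left(s;q\right),r\right)}^{L}(S)$.

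There is essentially no analytic obstacle here: the argument is a two-line telescoping of the two definitions, and the hypotheses $q\geq r$ and $p\geq q$ are exactly what make the two summing classes in play non-degenerate (via Concluding Remarks \ref{66}(1)) and make the intermediate object $\mathfrak{m}_{(s;q)}^{L}(\sigma,Sx',Sx'')$ meaningful as both the output of $S$ and the input of $T$. The only point requiring a word of care is the bookkeeping of which $\ell^{L,w}$-norm appears on the far right and the identification $\norm{\cdot\,|\ell_r^{L,w}}=\mathfrak{m}_{(r;r)}^{L}(\cdot)$; once that is noted, the conclusion is forced. In this sense the proposition is the natural "mixed-input, strong-output" companion of Proposition \ref{hous}, and I would present it with the same economy.
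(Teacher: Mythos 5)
Your proposal is correct and is exactly the argument the paper intends: its proof simply says that Definition \ref{seven} and Definition \ref{eight} together give the result, and your chaining of the two inequalities (plus the identification $\mathfrak{m}_{(r;r)}^{L}(\sigma,x',x'')=\norm{(\sigma,x',x'')\Big|\ell_r^{L,w}}$) is just that one-line argument written out in full.
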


\begin{proof}
Together, Definition \ref{seven} and Definition \ref{eight} immediately give us the result.\\
\end{proof}
In the case that $p=q$, this result gives

\begin{Corollary}\label{b}
Let $0<p\leq s\leq\infty$, $p\geq r$. If $S\in\Pi_{\left(\mathfrak{m}^L\left(s;p\right),r\right)}^{L}(X,Y)$ and $T\in\Pi_{\left(p,\mathfrak{m}^L\left(s;p\right)\right)}^{L}(Y,Z)$, then $T\circ S\in\Pi_{\left(p,\mathfrak{m}^L\left(r;r\right)\right)}^{L}(X,Z)$. Moreover
$$\pi_{\left(p,\mathfrak{m}^L\left(r;r\right)\right)}^{L}(T\circ S)\leq\pi_{\left(p,\mathfrak{m}^L\left(s;p\right)\right)}^{L}(T)\cdot\pi_{\left(\mathfrak{m}^L\left(s;p\right),r\right)}^{L}(S).$$
\end{Corollary}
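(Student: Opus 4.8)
The plan is to obtain this as the specialization $p=q$ of the preceding Proposition; equivalently, to run the same two-step chaining of the defining inequalities directly. First I would fix finitely many $\sigma_1,\cdots,\sigma_m\in\mathbb{R}$ and $x'_1,\cdots,x'_m,x''_1,\cdots,x''_m\in X$, and apply the hypothesis $T\in\Pi_{\left(p,\mathfrak{m}^L\left(s;p\right)\right)}^{L}(Y,Z)$ to the finite sequence $(\sigma,Sx',Sx'')\subset\mathbb{R}\times Y\times Y$. This is legitimate because the index condition $p\geq p$ holds, so Definition \ref{eight} applies and yields
$$\left\|(\sigma,T(Sx'),T(Sx''))\Big|\ell_p\right\|\leq\pi_{\left(p,\mathfrak{m}^L\left(s;p\right)\right)}^{L}(T)\cdot\mathfrak{m}_{(s;p)}^{L}(\sigma,Sx',Sx'').$$

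Second I would apply the hypothesis $S\in\Pi_{\left(\mathfrak{m}^L\left(s;p\right),r\right)}^{L}(X,Y)$: since $r\leq p$, Definition \ref{seven} (with the role of the inner index played by $p$) gives
$$\mathfrak{m}_{(s;p)}^{L}(\sigma,Sx',Sx'')\leq\pi_{\left(\mathfrak{m}^L\left(s;p\right),r\right)}^{L}(S)\cdot\left\|(\sigma,x',x'')\Big|\ell_r^{L,w}\right\|.$$
Combining the two displays, and invoking the identification $\left\|(\sigma,x',x'')\Big|\ell_r^{L,w}\right\|=\mathfrak{m}_{(r;r)}^{L}(\sigma,x',x'')$ (the case $q=s=r$ recorded in Concluding Remarks \ref{66}(2)), I obtain
$$\left\|(\sigma,(T\circ S)x',(T\circ S)x'')\Big|\ell_p\right\|\leq\pi_{\left(p,\mathfrak{m}^L\left(s;p\right)\right)}^{L}(T)\cdot\pi_{\left(\mathfrak{m}^L\left(s;p\right),r\right)}^{L}(S)\cdot\mathfrak{m}_{(r;r)}^{L}(\sigma,x',x'').$$
Since the finite sequences were arbitrary, Definition \ref{eight} shows $T\circ S\in\Pi_{\left(p,\mathfrak{m}^L\left(r;r\right)\right)}^{L}(X,Z)$, and passing to the infimum over admissible constants gives the claimed estimate $\pi_{\left(p,\mathfrak{m}^L\left(r;r\right)\right)}^{L}(T\circ S)\leq\pi_{\left(p,\mathfrak{m}^L\left(s;p\right)\right)}^{L}(T)\cdot\pi_{\left(\mathfrak{m}^L\left(s;p\right),r\right)}^{L}(S)$.

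There is essentially no obstacle: the argument is a purely formal composition of two defining inequalities, and the only point worth noting is that the intermediate quantity $\mathfrak{m}_{(s;p)}^{L}(\sigma,Sx',Sx'')$ is finite and meaningful, which holds automatically since every finite sequence is Lipschitz mixed $(s;p)-$summable. Alternatively, one may simply observe that Corollary \ref{b} is precisely the instance $p=q$ of the Proposition above, whose hypotheses $0<q\leq s\leq\infty$, $q\geq r$, $p\geq q$ then read $0<p\leq s\leq\infty$, $p\geq r$, and whose conclusion and norm inequality specialize verbatim.
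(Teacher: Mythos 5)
Your argument is correct and is essentially the paper's own: the Proposition preceding this Corollary is proved there by exactly this chaining of Definition \ref{seven} and Definition \ref{eight}, and the Corollary is obtained as the specialization $p=q$, just as you note in your final paragraph. No gaps; the index conditions $p\geq p$ and $r\leq p$ are checked correctly and the identification $\left\|(\sigma,x',x'')\Big|\ell_r^{L,w}\right\|=\mathfrak{m}_{(r;r)}^{L}(\sigma,x',x'')$ is the right way to land in $\Pi_{\left(p,\mathfrak{m}^L\left(r;r\right)\right)}^{L}(X,Z)$.
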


\begin{thm}\label{c}
Let $1\leq s\leq\infty$, $0< p\leq s\leq\infty$ and $p\geq r$. If $S$ is a Lipschitz map from $X$ into $Y$ such that $T\circ S\in\Pi_{\left(p,\mathfrak{m}^L\left(r;r\right)\right)}^{L}(X,F)$ for every $T\in\Pi_{\left (p,\mathfrak{m}^L\left(s;p\right)\right)}^{L}(Y,F)$ and each Banach space $F$, then $S\in\Pi_{\left(\mathfrak{m}^L\left(s;p\right),r\right)}^{L}(X,Y)$. Moreover $$\pi_{\left(\mathfrak{m}^L\left(s;p\right),r\right)}^{L}(S)=\sup\limits_{F\ Banach\ space}\left\{\pi_{\left(p,\mathfrak{m}^L\left(r;r\right)\right)}^{L}(T\circ S):T\in\Pi_{\left (p,\mathfrak{m}^L\left(s;p\right)\right)}^{L}(Y,F);\pi_{\left (p,\mathfrak{m}^L\left(s;p\right)\right)}^{L}(T)\leq 1\right\}$$ 
\end{thm}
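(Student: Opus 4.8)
The plan is to detect $S\in\Pi_{(\mathfrak{m}^L(s;p),r)}^{L}(X,Y)$ via the criterion (\ref{bus}) of Theorem \ref{a}, in which the roles of $q$ and $p$ are played by $p$ and $r$ respectively, and to produce that criterion by testing the hypothesis on $S$ against a suitable family of finite-rank Lipschitz maps. Fix $g_{1},\dots,g_{n}\in Y^{\#}$, set $F=\ell_{s}$ (and $F=c_{0}$ if $s=\infty$), and define $T_{g}\colon Y\to F$ by $T_{g}(y)=\big(g_{1}(y),\dots,g_{n}(y),0,0,\dots\big)$; this is a Lipschitz map with $T_{g}(y_{0})=0$. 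First I would show that $T_{g}\in\Pi_{(p,\mathfrak{m}^L(s;p))}^{L}(Y,F)$ with $\pi_{(p,\mathfrak{m}^L(s;p))}^{L}(T_{g})\le\left\|(g_{k})_{k=1}^{n}\Big|\ell_{s}(Y^{\#})\right\|$. Indeed, for finite sequences $(\sigma,y',y'')$ one has $\left\|(\sigma,T_{g}y',T_{g}y'')\Big|\ell_{p}\right\|^{p}=\sum_{j}|\sigma_{j}|^{p}\big(\sum_{k}|g_{k}(y'_{j})-g_{k}(y''_{j})|^{s}\big)^{p/s}$, and evaluating the supremum in Proposition \ref{two} at the discrete probability measure $\mu=\sum_{k}t_{k}\delta_{b_{k}}$ with $b_{k}=g_{k}/Lip(g_{k})$ and $t_{k}=Lip(g_{k})^{s}\left\|(g_{k})_{k=1}^{n}\Big|\ell_{s}(Y^{\#})\right\|^{-s}$ — exactly the measure used in the proof of Theorem \ref{a} — one obtains $\left\|(\sigma,T_{g}y',T_{g}y'')\Big|\ell_{p}\right\|\le\left\|(g_{k})_{k=1}^{n}\Big|\ell_{s}(Y^{\#})\right\|\cdot\mathfrak{m}_{(s;p)}^{L}(\sigma,y',y'')$, which is the desired bound by Definition \ref{eight}. (The degenerate cases $p=s$ and $s=\infty$ follow at once from the identifications of $\mathfrak{M}_{(p;p)}^{L}$ and $\mathfrak{M}_{(\infty;p)}^{L}$ in the Concluding Remarks.)

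The second step turns the purely qualitative hypothesis into a uniform estimate. Testing (\ref{flat2}) on one-term sequences and using $\sup_{f\in B_{X^{\#}}}|f(a)-f(b)|=d_{X}(a,b)$ (and likewise for $Y$) shows that these summing norms dominate the Lipschitz constant, $Lip(T)\le\pi_{(p,\mathfrak{m}^L(s;p))}^{L}(T)$ and $Lip(R)\le\pi_{(p,\mathfrak{m}^L(r;r))}^{L}(R)$, so convergence in either norm forces pointwise convergence. Hence the linear map $\Phi\colon\Pi_{(p,\mathfrak{m}^L(s;p))}^{L}(Y,F)\to\Pi_{(p,\mathfrak{m}^L(r;r))}^{L}(X,F)$, $\Phi(T)=T\circ S$, which is well defined by the hypothesis on $S$, has closed graph; both spaces being complete (Banach when $p\ge1$, complete $p$-normed otherwise), the closed graph theorem supplies a constant $L$ with $\pi_{(p,\mathfrak{m}^L(r;r))}^{L}(T\circ S)\le L\cdot\pi_{(p,\mathfrak{m}^L(s;p))}^{L}(T)$ for every $T\in\Pi_{(p,\mathfrak{m}^L(s;p))}^{L}(Y,F)$. (For $s=\infty$ one may instead apply the hypothesis to the canonical isometric embedding $\iota\colon Y\hookrightarrow\ell_{\infty}(B_{Y^{\#}})$, which is Lipschitz $(p,\mathfrak{m}^L(\infty;p))$-summing because every Lipschitz map is.)

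Combining the two steps: for $g_{1},\dots,g_{n}\in Y^{\#}$ with $\left\|(g_{k})_{k=1}^{n}\Big|\ell_{s}(Y^{\#})\right\|>0$, normalise $\widetilde{T}_{g}=T_{g}/\left\|(g_{k})_{k=1}^{n}\Big|\ell_{s}(Y^{\#})\right\|$, so that $\pi_{(p,\mathfrak{m}^L(s;p))}^{L}(\widetilde{T}_{g})\le1$ and therefore $\pi_{(p,\mathfrak{m}^L(r;r))}^{L}(\widetilde{T}_{g}\circ S)\le L$. Since $\mathfrak{M}_{(r;r)}^{L}=\ell_{r}^{L,w}$, writing this last inequality out on finite sequences $(\sigma,x',x'')$ gives
$$\Bigg[\sum_{j=1}^{m}|\sigma_{j}|^{p}\Big(\sum_{k=1}^{n}\big|\langle g_{k},Sx'_{j}\rangle_{(Y^{\#},Y)}-\langle g_{k},Sx''_{j}\rangle_{(Y^{\#},Y)}\big|^{s}\Big)^{\frac{p}{s}}\Bigg]^{\frac{1}{p}}\le L\cdot\left\|(\sigma,x',x'')\Big|\ell_{r}^{L,w}\right\|\cdot\left\|(g_{k})_{k=1}^{n}\Big|\ell_{s}(Y^{\#})\right\|,$$
which is exactly (\ref{bus}) with $q$ and $p$ there taken to be $p$ and $r$ (the omitted case $\left\|(g_{k})_{k=1}^{n}\Big|\ell_{s}(Y^{\#})\right\|=0$ being trivial). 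By Theorem \ref{a} it follows that $S\in\Pi_{(\mathfrak{m}^L(s;p),r)}^{L}(X,Y)$ with $\pi_{(\mathfrak{m}^L(s;p),r)}^{L}(S)\le L$.

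Finally I would match the constant with the stated supremum. On one hand $L$ is the norm of $\Phi$, that is, the supremum of $\pi_{(p,\mathfrak{m}^L(r;r))}^{L}(T\circ S)$ over $T\in\Pi_{(p,\mathfrak{m}^L(s;p))}^{L}(Y,F)$ with $\pi_{(p,\mathfrak{m}^L(s;p))}^{L}(T)\le1$ for the single space $F=\ell_{s}$ (or $c_{0}$), so $L$ is at most the supremum over all Banach spaces appearing in the statement. On the other hand, since $S$ now lies in $\Pi_{(\mathfrak{m}^L(s;p),r)}^{L}(X,Y)$, Corollary \ref{b} gives $\pi_{(p,\mathfrak{m}^L(r;r))}^{L}(T\circ S)\le\pi_{(p,\mathfrak{m}^L(s;p))}^{L}(T)\cdot\pi_{(\mathfrak{m}^L(s;p),r)}^{L}(S)$ for every Banach space $F$ and every such $T$; taking the supremum over $T$ with $\pi_{(p,\mathfrak{m}^L(s;p))}^{L}(T)\le1$ shows that the supremum over all $F$ is $\le\pi_{(\mathfrak{m}^L(s;p),r)}^{L}(S)\le L$. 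Hence all three numbers coincide, which is the claimed formula. I expect the main obstacle to be precisely this passage from the qualitative hypothesis to a single uniform constant: it forces the closed graph theorem into play and hinges on the elementary but indispensable fact that the relevant $\pi$-norms dominate $Lip(\cdot)$, so that closedness of the graph can be verified pointwise; everything else is bookkeeping with Theorem \ref{a}, Proposition \ref{two} and Corollary \ref{b}.
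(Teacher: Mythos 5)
Your proposal is correct and follows essentially the same route as the paper: you test the hypothesis against the finite-rank maps $T(y)=\big(\langle g_{1},y\rangle_{(Y^{\#},Y)},\dots,\langle g_{n},y\rangle_{(Y^{\#},Y)},0,\dots\big)$ into $\ell_{s}$, bound $\pi_{\left(p,\mathfrak{m}^L\left(s;p\right)\right)}^{L}(T)$ by $\left\|(g_{k})_{k=1}^{n}\Big|\ell_{s}(Y^{\#})\right\|$, deduce the criterion of Theorem \ref{a}, and get the norm equality from Corollary \ref{b}, exactly as in the paper's proof. The only deviations are that you bound $\pi_{\left(p,\mathfrak{m}^L\left(s;p\right)\right)}^{L}(T)$ via Proposition \ref{two} and a discrete measure instead of the paper's H\"older/factorization computation, and you make explicit (via a closed-graph argument for the single space $F=\ell_{s}$, using that the summing norms dominate $Lip(\cdot)$) the uniform constant whose existence the paper only asserts by citing the analogous operator-ideal reasoning in Pietsch.
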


\begin{proof}
By analogous reasoning as in the theory of operator ideals, see \cite[Sec. 7.2]{P80} we have \\$M=\sup\limits_{F\ Banach\ space}\left\{\pi_{\left(p,\mathfrak{m}^L\left(r;r\right)\right)}^{L}(T\circ S):T\in\Pi_{\left (p,\mathfrak{m}^L\left(s;p\right)\right)}^{L}(Y,F);\pi_{\left (p,\mathfrak{m}^L\left(s;p\right)\right)}^{L}(T)\leq 1\right\}<\infty$.\\ For $(g_k)_{k=1}^{n}\subset Y^{\#}$ and $n\in\mathbb{N}$. We define $T\in\Pi_{\left(p,\mathfrak{m}^L\left(s;p\right)\right)}^{L}(Y,\ell_s)$ by the rule $$T(y)=\Big(\left\langle g_{1},y\right\rangle_{(Y^{\#},Y)},\cdots,\left\langle g_{n},y\right\rangle_{(Y^{\#},Y)},0,0,0,\cdots\Big).$$ We recall that for $0<p\leq s\leq\infty$, the index $s'\left(p\right)$ is determined by the equation $$\frac{1}{s'\left(p\right)}+\frac{1}{s}=\frac{1}{p}.$$ Let $\sigma_1,\cdot\cdot\cdot,\sigma_m\in\mathbb{R}$; $x'_1,\cdots, x'_m, x''_1,\cdots,x''_m\in X$; $y'_1,\cdot\cdot\cdot,y'_m, y''_1,\cdot\cdot\cdot,y''_m\in Y$ and $m\in\N$. By using the H\"older inequality we have

\begin{align}\label{manaf4}
\Bigg[\sum\limits_{j=1}^{m}&\left|\sigma_j\right|^{p}\left\|Ty'_j-Ty''_j\Big|\ell_s\right\|^{p}\Bigg]^\frac{1}{p}\leq\left\|\tau\Big|\ell_{s'\left(p\right)}\right\|\cdot\Bigg[\sum\limits_{j=1}^{m}\left|\frac{\sigma_j}{\tau_j}\right|^{s}\left\|Ty'_j-Ty''_j\Big|\ell_s\right\|^{s}\Bigg]^\frac{1}{s} \nonumber \\ 
&=\left\|\tau\Big|\ell_{s'\left(p\right)}\right\|\cdot\Bigg[\sum\limits_{j=1}^{m}\left|\frac{\sigma_j}{\tau_j}\right|^{s}\left\|\Big(\left\langle g_k,y'_j\right\rangle_{(Y^{\#},Y)}-\left\langle g_k,y''_j\right\rangle_{(Y^{\#},Y)}\Big)_{k=1}^{n}\Big|\ell_s\right\|^{s}\Bigg]^\frac{1}{s} \nonumber \\ 
&=\left\|\tau\Big|\ell_{s'\left(p\right)}\right\|\cdot\Bigg[\sum\limits_{j=1}^{m}\sum\limits_{k=1}^{n}\left|\frac{\sigma_j}{\tau_j}\right|^{s}\left|\left\langle g_k,y'_j\right\rangle_{(Y^{\#},Y)}-\left\langle g_k,y''_j\right\rangle_{(Y^{\#},Y)}\right|^{s}\Bigg]^\frac{1}{s} \nonumber \\
&=\left\|\tau\Big|\ell_{s'\left(p\right)}\right\|\cdot\Bigg[\sum\limits_{j=1}^{m}\sum\limits_{k=1}^{n}\left|\frac{\sigma_j}{\tau_j}\right|^{s}\cdot Lip(g_k)^s\left|\left\langle\frac{g_k}{Lip(g_k)},y'_j\right\rangle_{(Y^{\#},Y)}-\left\langle\frac{g_k}{Lip(g_k)},y''_j\right\rangle_{(Y^{\#},Y)}\right|^{s}\Bigg]^\frac{1}{s} \nonumber \\ 
&=\left\|\tau\Big|\ell_{s'\left(p\right)}\right\|\cdot\Bigg[\sum\limits_{k=1}^{n}\sum\limits_{j=1}^{m}\left|\frac{\sigma_j}{\tau_j}\right|^{s}\cdot Lip(g_k)^s\left|\left\langle\frac{g_k}{Lip(g_k)},y'_j\right\rangle_{(Y^{\#},Y)}-\left\langle\frac{g_k}{Lip(g_k)},y''_j\right\rangle_{(Y^{\#},Y)}\right|^{s}\Bigg]^\frac{1}{s} \nonumber \\
&\leq\left\|\tau\Big|\ell_{s'\left(p\right)}\right\|\cdot\left\|(g_k)_{k=1}^{n}\Big|\ell_s(Y^{\#})\right\|\cdot\left\|\big(\frac{\sigma}{\tau},y',y''\big)\Big|\ell_s^{L,w}\right\|. 
\end{align}
Taking the infimum over all $\tau\in\ell_{s'\left(p\right)}$ on the right side of (\ref{manaf4}), we get 
$$\Bigg[\sum\limits_{j=1}^{m}\left|\sigma_j\right|^{p}\left\|Ty'_j-Ty''_j\big|\ell_s\right\|^{p}\Bigg]^\frac{1}{p}\leq\left\|(g_k)_{k=1}^{n}\Big|\ell_s(Y^{\#})\right\|\cdot \mathfrak{m}_{(s;p)}^{L}(\sigma,y',y'').$$
Then 
$$
 \pi_{\left(p,\mathfrak{m}^L\left(s;p\right)\right)}^{L}(T)\leq\left\|(g_k)_{k=1}^{n}\Big|\ell_s(Y^{\#})\right\|.
$$
\begin{align}\label{moon}
\Bigg[\sum\limits_{j=1}^{m}\left|\sigma_j\right|^{p} & 
\bigg[\sum\limits_{k=1}^{n}\left|\left\langle g_k,Sx'_j\right\rangle_{(Y^{\#},Y)}-\left\langle g_k,Sx''_j\right\rangle_{(Y^{\#},Y)}\right|^{s}\bigg]^\frac{p}{s}\Bigg]^\frac{1}{p}\nonumber \\
&=\Bigg[\sum\limits_{j=1}^{m}\left|\sigma_j\right|^{p}\left\|\Big(\left\langle g_k,Sx'_j\right\rangle_{(Y^{\#},Y)}-\left\langle g_k,Sx''_j\right\rangle_{(Y^{\#},Y)}\Big)_{k=1}^{n}\Big|\ell_s\right\|^{p}\Bigg]^\frac{1}{p}\nonumber \\
&=\Bigg[\sum\limits_{j=1}^{m}\left|\sigma_j\right|^{p}\left\|T(Sx'_j)-T(Sx''_j)\Big|\ell_s\right\|^{p}\Bigg]^\frac{1}{p} \nonumber \\
&=\Bigg[\sum\limits_{j=1}^{m}\left|\sigma_j\right|^{p}\left\|(T\circ S)(x'_j)-(T\circ S)(x''_j)\Big|\ell_s\right\|^{p}\Bigg]^\frac{1}{p} \\
&\leq\pi_{\left(p,\mathfrak{m}^L\left(r;r\right)\right)}^{L}(T\circ S)\cdot\left\|(\sigma,x',x'')\Big|\ell_r^{L,w}\right\| \nonumber\\
&\leq M\cdot\left\|(g_k)_{k=1}^{n}\Big|\ell_s(Y^{\#})\right\|\cdot\left\|(\sigma,x',x'')\Big|\ell_r^{L,w}\right\| \nonumber 
\end{align}
By Theorem \ref{a}, we obtain $S\in\Pi_{\left(\mathfrak{m}^L\left(s;p\right),r\right)}^{L}(X,Y)$ with $\pi_{\left(\mathfrak{m}^L\left(s;p\right),r\right)}^{L}(S)\leq M$. From Corollary \ref{b} we have $\pi_{\left(\mathfrak{m}^L\left(s;p\right),r\right)}^{L}(S)=M$. \\ 
\end{proof}
 
Now we consider some special cases. By Theorem \ref{d} and Theorem \ref{c}, we can state the following result.

\begin{Corollary}\label{e}
Let $1\leq s\leq\infty$, $0< p\leq s\leq\infty$, $p\geq r$. If $S$ is a Lipschitz map from $X$ into $Y$ such that $T\circ S\in\Pi_{\left(p,\mathfrak{m}^L\left(r;r\right)\right)}^{L}(X,F)$ for every $T\in\Pi_{\left (s,\mathfrak{m}^L\left(s;s\right)\right)}^{L}(Y,F)$ and each Banach space $F$, then $S\in\Pi_{\left(\mathfrak{m}^L\left(s;p\right),r\right)}^{L}(X,Y)$. Moreover $$\pi_{\left(\mathfrak{m}^L\left(s;p\right),r\right)}^{L}(S)=\sup\limits_{F\ Banach\ space}\left\{\pi_{\left(p,\mathfrak{m}^L\left(r;r\right)\right)}^{L}(T\circ S):T\in\Pi_{\left (s,\mathfrak{m}^L\left(s;s\right)\right)}^{L}(Y,F);\pi_{\left (s,\mathfrak{m}^L\left(s;s\right)\right)}^{L}(T)\leq 1\right\}$$ 
\end{Corollary}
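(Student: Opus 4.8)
The plan is to obtain this corollary from Theorem \ref{c}, combined with the inclusion in Theorem \ref{d} and the composition estimate in Theorem \ref{manaf1}, with no new computation. The point is that the hypothesis stated here is formally \emph{stronger} than the one in Theorem \ref{c}: since $0<p\le s$, Theorem \ref{d} gives $\Pi_{\left(p,\mathfrak{m}^L\left(s;p\right)\right)}^{L}(Y,F)\subset\Pi_{\left(s,\mathfrak{m}^L\left(s;s\right)\right)}^{L}(Y,F)$ for every Banach space $F$. Hence, if $T\circ S\in\Pi_{\left(p,\mathfrak{m}^L\left(r;r\right)\right)}^{L}(X,F)$ for every $T\in\Pi_{\left(s,\mathfrak{m}^L\left(s;s\right)\right)}^{L}(Y,F)$, then in particular the same holds for every $T\in\Pi_{\left(p,\mathfrak{m}^L\left(s;p\right)\right)}^{L}(Y,F)$. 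Theorem \ref{c} (whose index constraints $1\le s\le\infty$, $0<p\le s\le\infty$, $p\ge r$ are exactly those assumed here) then immediately yields $S\in\Pi_{\left(\mathfrak{m}^L\left(s;p\right),r\right)}^{L}(X,Y)$ together with $\pi_{\left(\mathfrak{m}^L\left(s;p\right),r\right)}^{L}(S)=M_{1}$, where
$$M_{1}=\sup_{F}\left\{\pi_{\left(p,\mathfrak{m}^L\left(r;r\right)\right)}^{L}(T\circ S):T\in\Pi_{\left(p,\mathfrak{m}^L\left(s;p\right)\right)}^{L}(Y,F),\ \pi_{\left(p,\mathfrak{m}^L\left(s;p\right)\right)}^{L}(T)\leq 1\right\}.$$

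It then remains to identify $M_{1}$ with the quantity appearing in the statement,
$$M_{2}=\sup_{F}\left\{\pi_{\left(p,\mathfrak{m}^L\left(r;r\right)\right)}^{L}(T\circ S):T\in\Pi_{\left(s,\mathfrak{m}^L\left(s;s\right)\right)}^{L}(Y,F),\ \pi_{\left(s,\mathfrak{m}^L\left(s;s\right)\right)}^{L}(T)\leq 1\right\}.$$
For $M_{1}\leq M_{2}$ I would again use Theorem \ref{d}: any $T$ admissible for $M_{1}$, i.e. with $\pi_{\left(p,\mathfrak{m}^L\left(s;p\right)\right)}^{L}(T)\leq 1$, also satisfies $\pi_{\left(s,\mathfrak{m}^L\left(s;s\right)\right)}^{L}(T)\leq\pi_{\left(p,\mathfrak{m}^L\left(s;p\right)\right)}^{L}(T)\leq 1$, hence is admissible for $M_{2}$; so the supremum defining $M_{1}$ is taken over a subfamily of the one defining $M_{2}$, giving $M_{1}\leq M_{2}$. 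For the reverse inequality I would invoke the composition Theorem \ref{manaf1} in exactly the configuration stated there ($0<p\le s\le\infty$, $p\ge r$, $S$ on the right, $T$ on the left): since $S\in\Pi_{\left(\mathfrak{m}^L\left(s;p\right),r\right)}^{L}(X,Y)$, for every Banach space $F$ and every $T\in\Pi_{\left(s,\mathfrak{m}^L\left(s;s\right)\right)}^{L}(Y,F)$ with $\pi_{\left(s,\mathfrak{m}^L\left(s;s\right)\right)}^{L}(T)\le 1$ one gets $T\circ S\in\Pi_{\left(p,\mathfrak{m}^L\left(r;r\right)\right)}^{L}(X,F)$ and
$$\pi_{\left(p,\mathfrak{m}^L\left(r;r\right)\right)}^{L}(T\circ S)\le\pi_{\left(s,\mathfrak{m}^L\left(s;s\right)\right)}^{L}(T)\cdot\pi_{\left(\mathfrak{m}^L\left(s;p\right),r\right)}^{L}(S)\le\pi_{\left(\mathfrak{m}^L\left(s;p\right),r\right)}^{L}(S);$$
taking the supremum over all such $T$ and all $F$ gives $M_{2}\le\pi_{\left(\mathfrak{m}^L\left(s;p\right),r\right)}^{L}(S)$.

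Chaining the three facts yields $\pi_{\left(\mathfrak{m}^L\left(s;p\right),r\right)}^{L}(S)=M_{1}\le M_{2}\le\pi_{\left(\mathfrak{m}^L\left(s;p\right),r\right)}^{L}(S)$, which forces equality throughout, so in particular $\pi_{\left(\mathfrak{m}^L\left(s;p\right),r\right)}^{L}(S)=M_{2}$, the asserted formula. I do not anticipate a genuine obstacle: the only thing needing care is keeping the directions in Theorem \ref{d} straight — the class carrying the ``$(s;s)$'' index is the \emph{larger} one, so testing the hypothesis against it is the \emph{stronger} assumption while the supremum taken over it is the \emph{larger} number — and confirming that the numerical constraints on $s,p,r$ here coincide exactly with those required by Theorems \ref{c}, \ref{d} and \ref{manaf1}, so that all three apply verbatim.
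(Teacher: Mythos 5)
Your argument is correct, and it uses exactly the ingredients the paper uses (Theorems \ref{c}, \ref{d} and \ref{manaf1}), but it is organized differently. The paper's written proof does not invoke Theorem \ref{c} as a black box: it re-runs that theorem's construction, taking the test operator $T(y)=\big(\left\langle g_{1},y\right\rangle,\dots,\left\langle g_{n},y\right\rangle,0,\dots\big)$ built from finitely many $g_{k}\in Y^{\#}$, bounding $\pi_{\left(p,\mathfrak{m}^L\left(s;p\right)\right)}^{L}(T)\leq\left\|(g_k)_{k=1}^{n}\big|\ell_s(Y^{\#})\right\|$, upgrading this via Theorem \ref{d} to a bound on $\pi_{\left(s,\mathfrak{m}^L\left(s;s\right)\right)}^{L}(T)$ so that the corollary's hypothesis (and the constant $D$) applies to this specific $T$, and then feeding the resulting estimate (display (\ref{moon})) into Theorem \ref{a} to get $\pi_{\left(\mathfrak{m}^L\left(s;p\right),r\right)}^{L}(S)\leq D$, with the reverse inequality from Theorem \ref{manaf1}. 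You instead observe that, by the inclusion of Theorem \ref{d}, the corollary's hypothesis is formally stronger than that of Theorem \ref{c}, apply Theorem \ref{c} verbatim to get $\pi_{\left(\mathfrak{m}^L\left(s;p\right),r\right)}^{L}(S)=M_{1}$, and then squeeze $M_{1}\leq M_{2}\leq\pi_{\left(\mathfrak{m}^L\left(s;p\right),r\right)}^{L}(S)$ by comparing the two suprema (Theorem \ref{d} for the first inequality, Theorem \ref{manaf1} for the second). This purely formal reduction is valid — the index constraints of Theorems \ref{c}, \ref{d} and \ref{manaf1} coincide with those assumed here, and your directions in the norm comparisons are the right ones — and it buys a shorter proof that avoids repeating the test-operator computation; the paper's re-derivation, on the other hand, keeps the proof self-contained at the level of Theorem \ref{a} and makes explicit which test maps realize the constant $D$.
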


\begin{proof}
By analogous reasoning as in the theory of operator ideals, see \cite[Sec. 7.2]{P80} we have\\
$D=\sup\limits_{F\ Banach\ space}\left\{\pi_{\left(p,\mathfrak{m}^L\left(r;r\right)\right)}^{L}(T\circ S):T\in\Pi_{\left (s,\mathfrak{m}^L\left(s;s\right)\right)}^{L}(Y,F);\pi_{\left (s,\mathfrak{m}^L\left(s;s\right)\right)}^{L}(T)\leq 1\right\}<\infty$.\\ As in the beginning of the proof of Theorem \ref{c}, for $(g_k)_{k=1}^{n}\subset Y^{\#}$ we define $T\in\Pi_{\left(p,\mathfrak{m}^L\left(s;p\right)\right)}^{L}(Y,\ell_s)$ by the rule $$T(y)=\Big(\left\langle g_{1},y\right\rangle_{(Y^{\#},Y)},\cdots,\left\langle g_{n},y\right\rangle_{(Y^{\#},Y)},0,0,0,\cdots\Big).$$ We also obtained 
$$\pi_{\left(p,\mathfrak{m}^L\left(s;p\right)\right)}^{L}(T)\leq\left\|(g_k)_{k=1}^{n}\Big|\ell_s(Y^{\#})\right\|.$$ It follows from Theorem \ref{d} that $$\pi_{\left(s,\mathfrak{m}^L\left(s;s\right)\right)}^{L}(T)\leq\left\|(g_k)_{k=1}^{n}\Big|\ell_s(Y^{\#})\right\|.$$
Then from (\ref{moon}) we have
\begin{align}
\Bigg[\sum\limits_{j=1}^{m}\left|\sigma_j\right|^{p}\bigg[\sum\limits_{k=1}^{n}&\left|\left\langle g_k,Sx'_j\right\rangle_{(Y^{\#},Y)}-\left\langle  g_k,Sx''_j\right\rangle_{(Y^{\#},Y)}\right|^{s}\bigg]^\frac{p}{s}\Bigg]^\frac{1}{p}\nonumber \\
&\leq D\cdot\left\|(g_k)_{k=1}^{n}\Big|\ell_s(Y^{\#})\right\|\cdot\left\|(\sigma,x',x'')\Big|\ell_r^{L,w}\right\|. \nonumber
\end{align}
By Theorem \ref{a}, we obtain $S\in\Pi_{\left(\mathfrak{m}^L\left(s;p\right),r\right)}^{L}(X,Y)$ with  $\pi_{\left(\mathfrak{m}^L\left(s;p\right),r\right)}^{L}(S)\leq D$. From Theorem \ref{manaf1} we have $\pi_{\left(\mathfrak{m}^L\left(s;p\right),r\right)}^{L}(S)=D$. \\
\end{proof}

The general inclusion results are the following.

\begin{Proposition}
If $0<p_1\leq p_2$; $0<q_1\leq q_2$; $0<s_1\leq s_2$; $q_j\leq s_j$; $q_j\leq p_j$; $j=1,2$ and $$\frac{1}{q_1}-\frac{1}{q_2}\leq\frac{1}{s_1}-\frac{1}{s_2}\leq\frac{1}{p_1}-\frac{1}{p_2},$$ then $$\Pi_{\left(p_1,\mathfrak{m}^L\left(s_1;q_1\right)\right)}^{L}(X,Y)\subset\Pi_{\left(p_2,\mathfrak{m}^L\left(s_2;q_2\right)\right)}^{L}(X,Y).$$ Moreover $$\pi_{\left(p_2,\mathfrak{m}^L\left(s_2;q_2\right)\right)}^{L}(T)\leq\pi_{\left(p_1,\mathfrak{m}^L\left(s_1;q_1\right)\right)}^{L}(T)$$ for every $T\in\Pi_{\left(p_1,\mathfrak{m}^L\left(s_1;q_1\right)\right)}^{L}(X,Y)$.
\end{Proposition}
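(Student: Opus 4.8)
The plan is to reduce the statement to the two one-parameter inclusion families already recorded in the Concluding Remarks \ref{66} together with the key ``mixed exponent'' interpolation inequality encoded in Proposition \ref{two}. First I would isolate the roles of the three hypotheses $\frac{1}{q_1}-\frac{1}{q_2}\le\frac{1}{s_1}-\frac{1}{s_2}\le\frac{1}{p_1}-\frac{1}{p_2}$: the left inequality controls the passage from the inner $(s_1;q_1)$-mixing norm to the $(s_2;q_2)$-mixing norm on the domain side, while the right inequality controls the passage from the $\ell_{p_1}$-norm to the $\ell_{p_2}$-norm on the range side. Concretely, given $T\in\Pi_{(p_1,\mathfrak{m}^L(s_1;q_1))}^{L}(X,Y)$ and a finite sequence $(\sigma,x',x'')$, I would start from the defining inequality $\norm{(\sigma,Tx',Tx'')|\ell_{p_1}}\le \pi_{(p_1,\mathfrak{m}^L(s_1;q_1))}^{L}(T)\cdot\mathfrak{m}_{(s_1;q_1)}^{L}(\sigma,x',x'')$ and then interpolate both sides.

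The main technical device will be a scalar reweighting trick of the type already used in the proof of Theorem \ref{d}: to raise the outer exponent on the image side from $p_1$ to $p_2$, I would multiply the triple by an auxiliary scalar sequence $\alpha$ with $\norm{\alpha|\ell_\rho}\le 1$ for the appropriate conjugate-type index $\rho$ determined by $\frac1{p_1}=\frac1{p_2}+\frac1\rho$, apply the $\Pi_{(p_1,\mathfrak{m}^L(s_1;q_1))}^{L}$ estimate to the reweighted sequence $(\alpha^{1/p_1}\sigma,x',x'')$, and then take the supremum over all such $\alpha$; H\"older's inequality converts the $\ell_{p_1}$-sum with weights into an $\ell_{p_2}$-sum, which is exactly the range-side gain permitted by $\frac{1}{s_1}-\frac{1}{s_2}\le\frac{1}{p_1}-\frac{1}{p_2}$ once one checks that the mixing norm absorbs the leftover weight. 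On the domain side, I would invoke the monotonicity $\mathfrak{m}_{(s_1;q_1)}^{L}(\sigma,x',x'')$ versus $\mathfrak{m}_{(s_2;q_2)}^{L}(\sigma,x',x'')$ — which is \emph{not} one of the inclusions stated verbatim in Remark \ref{66}, so I would establish it first as a lemma using the factorization definition (Definition \ref{car}): write the $(s_2;q_2)$-factorization $\sigma/\tau\in\ell_{s_2}^{L,w}$ with $\tau\in\ell_{s_2'(q_2)}$, split $\tau=\tau_1\tau_2$ and use H\"older with the index identity coming precisely from $\frac{1}{q_1}-\frac{1}{q_2}\le\frac{1}{s_1}-\frac{1}{s_2}$ to produce an $(s_1;q_1)$-factorization, together with the weak-norm inclusion $\ell_{s_1}^{L,w}\supset\ell_{s_2}^{L,w}$.

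Chaining these two estimates, I would obtain
$$\norm{(\sigma,Tx',Tx'')|\ell_{p_2}}\le \pi_{(p_1,\mathfrak{m}^L(s_1;q_1))}^{L}(T)\cdot\mathfrak{m}_{(s_2;q_2)}^{L}(\sigma,x',x''),$$
which is exactly the defining inequality for membership in $\Pi_{(p_2,\mathfrak{m}^L(s_2;q_2))}^{L}(X,Y)$ with constant at most $\pi_{(p_1,\mathfrak{m}^L(s_1;q_1))}^{L}(T)$, and then Definition \ref{eight} gives the norm comparison $\pi_{(p_2,\mathfrak{m}^L(s_2;q_2))}^{L}(T)\le\pi_{(p_1,\mathfrak{m}^L(s_1;q_1))}^{L}(T)$. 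I expect the main obstacle to be bookkeeping: verifying that all the auxiliary indices ($\rho$, $s_2'(q_2)$, $s_1'(q_1)$ and the splitting exponents) are well-defined and nonnegative under the constraints $q_j\le s_j$, $q_j\le p_j$, and that the three-term chain of reciprocal inequalities is exactly what makes every H\"older application legitimate — in particular handling the endpoint cases $s_j=\infty$ or $p_j=\infty$ (where $\ell^{L,w}_\infty=\ell_\infty$ and the mixing norm degenerates as in Remark \ref{66}(2)) separately. Once the index arithmetic is pinned down, each individual inequality is a routine application of H\"older together with the already-proved reductions.
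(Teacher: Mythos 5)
Your overall skeleton (reweight the triple, apply the $(p_1,\mathfrak{m}^L(s_1;q_1))$-estimate, then use H\"older/duality over the weights to upgrade the $\ell_{p_1}$-sum to an $\ell_{p_2}$-sum on the image side) is the same device the paper uses, but your domain-side step contains a genuine error. The unweighted monotonicity $\mathfrak{m}^L_{(s_1;q_1)}(\sigma,x',x'')\le\mathfrak{m}^L_{(s_2;q_2)}(\sigma,x',x'')$ that you propose to establish as a lemma is false whenever $q_1<q_2$: take $\sigma_j=1$, $x'_j=a$, $x''_j=b$ for $j=1,\dots,m$; by Proposition \ref{two} the left-hand side equals $m^{1/q_1}d_X(a,b)$ while the right-hand side equals $m^{1/q_2}d_X(a,b)$, so the inequality fails for $m\ge2$. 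Correspondingly, the inclusion you invoke, $\ell^{L,w}_{s_1}\supset\ell^{L,w}_{s_2}$, is reversed: for $s_1\le s_2$ one has $\norm{(\cdot)\big|\ell^{L,w}_{s_2}}\le\norm{(\cdot)\big|\ell^{L,w}_{s_1}}$, i.e. $\ell^{L,w}_{s_1}\subset\ell^{L,w}_{s_2}$, so an $(s_2;q_2)$-factorization of the same triple cannot be converted into an $(s_1;q_1)$-factorization for free.

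The repair, which is exactly the paper's proof, is that the two sides cannot be decoupled: the auxiliary weight must stay inside the mixed-norm comparison. Set $\frac1q=\frac1{q_1}-\frac1{q_2}$, $\frac1s=\frac1{s_1}-\frac1{s_2}$, $\frac1p=\frac1{p_1}-\frac1{p_2}$. Apply the hypothesis to $(\alpha\cdot\sigma,x',x'')$ and prove the weighted comparison $\mathfrak{m}^L_{(s_1;q_1)}(\alpha\cdot\sigma,x',x'')\le\norm{\alpha\big|\ell_s}\cdot\mathfrak{m}^L_{(s_2;q_2)}(\sigma,x',x'')$: take a factorization witness $\lambda\in\ell_{s'_2(q_2)}$ for $(\sigma,x',x'')$, note that the left-hand hypothesis gives $\frac1{s'_1(q_1)}-\frac1{s'_2(q_2)}=\bigl(\frac1{q_1}-\frac1{q_2}\bigr)-\bigl(\frac1{s_1}-\frac1{s_2}\bigr)\le0$, hence $\norm{\lambda\big|\ell_{s'_1(q_1)}}\le\norm{\lambda\big|\ell_{s'_2(q_2)}}$, and use H\"older with $\frac1{s_1}=\frac1s+\frac1{s_2}$ inside the weak norm to absorb the weight: $\norm{\bigl(\frac{\alpha\sigma}{\lambda},x',x''\bigr)\big|\ell^{L,w}_{s_1}}\le\norm{\alpha\big|\ell_s}\cdot\norm{\bigl(\frac{\sigma}{\lambda},x',x''\bigr)\big|\ell^{L,w}_{s_2}}$. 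Since the right-hand hypothesis forces $p\le s$, so $\norm{\alpha\big|\ell_s}\le\norm{\alpha\big|\ell_p}$, taking the supremum over $\norm{\alpha\big|\ell_p}\le1$ converts $\norm{(\alpha\sigma,Tx',Tx'')\big|\ell_{p_1}}$ into $\norm{(\sigma,Tx',Tx'')\big|\ell_{p_2}}$ and yields $\pi^{L}_{(p_2,\mathfrak{m}^L(s_2;q_2))}(T)\le\pi^{L}_{(p_1,\mathfrak{m}^L(s_1;q_1))}(T)$. So your range-side step is sound, but the ``leftover weight'' cannot be discharged by an unweighted comparison of mixed norms; it is precisely the extra $\ell_s$-factor gained in passing from $s_1$ to $s_2$ that must absorb it.
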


\begin{proof}
We consider $\frac{1}{q}=\frac{1}{q_1}-\frac{1}{q_2}$; $\frac{1}{s}=\frac{1}{s_1}-\frac{1}{s_2}$ and $\frac{1}{p}=\frac{1}{p_1}-\frac{1}{p_2}$. We have $p\leq s$. Let $T$ be an arbitrary operator in $\Pi_{\left(p_1,\mathfrak{m}^L\left(s_1;q_1\right)\right)}^{L}(X,Y)$ and $\alpha\subset\mathbb{R}$. Hence
\begin{align}
\left\|(\alpha\cdot\sigma,Tx',Tx'')\Big|\ell_{p_1}\right\|&\leq\pi_{\left(p_1,\mathfrak{m}^L\left(s_1;q_1\right)\right)}^{L}(T)\cdot \mathfrak{m}_{(s_1;q_1)}^{L}(\alpha\cdot\sigma,x',x'') \nonumber \\
&\leq\pi_{\left(p_1,\mathfrak{m}^L\left(s_1;q_1\right)\right)}^{L}(T)\cdot\left\|\lambda\Big|\ell_{s'_1\left(q_1\right)}\right\|\cdot\left\|\left(\frac{\alpha\cdot\sigma}{\lambda},x',x''\right)\Big|\ell_{s_1}^{L,w}\right\|. \nonumber
\end{align}
By using the H\"older inequality and $s'_2(q_2)\leq s'_1(q_1)$, we have
\begin{equation}\label{nine}
\left\|(\alpha\cdot\sigma,Tx',Tx'')\Big|\ell_{p_1}\right\|\leq\pi_{\left(p_1,\mathfrak{m}^L\left(s_1;q_1\right)\right)}^{L}(T)\cdot\left\|\lambda\Big|\ell_{s'_2\left(q_2\right)}\right\|\cdot\left\|\alpha\Big|\ell_{s}\right\|\cdot\left\|\left(\frac{\sigma}{\lambda},x',x''\right)\Big|\ell_{s_2}^{L,w}\right\|. 
\end{equation}
Since $p\leq s$ and taking the infimum over $\lambda\in\ell_{s'_2\left(q_2\right)}$ on the right side of (\ref{nine}), we obtain
$$\left\|(\alpha\cdot\sigma,Tx',Tx'')\Big|\ell_{p_1}\right\|\leq\pi_{\left(p_1,\mathfrak{m}^L\left(s_1;q_1\right)\right)}^{L}(T)\cdot\left\|\alpha\Big|\ell_{p}\right\|\cdot\mathfrak{m}_{(s_2;q_2)}^{L}(\sigma,x',x'').$$
Then
$$\left\|(\sigma,Tx',Tx'')\Big|\ell_{p_2}\right\|\leq\pi_{\left(p_1,\mathfrak{m}^L\left(s_1;q_1\right)\right)}^{L}(T)\cdot\mathfrak{m}_{(s_2;q_2)}^{L}(\sigma,x',x'').$$ Moreover, it follows from Definition \ref{eight} that
$$\pi_{\left(p_2,\mathfrak{m}^L\left(s_2;q_2\right)\right)}^{L}(T)\leq\pi_{\left(p_1,\mathfrak{m}^L\left(s_1;q_1\right)\right)}^{L}(T).$$ 
\end{proof}

 In the case that $q_j=s_j$, for $j=1,2$, this result gives

\begin{Corollary}\label{zoo}
If $0<p_1\leq p_2$; $0<q_1\leq q_2$; $q_j\leq p_j$, $j=1,2$ and $\frac{1}{q_1}-\frac{1}{q_2}\leq\frac{1}{p_1}-\frac{1}{p_2}$, then
$$\Pi_{\left(p_1,\mathfrak{m}^L\left(q_1;q_1\right)\right)}^{L}(X,Y)\subset\Pi_{\left(p_2,\mathfrak{m}^L\left(q_2;q_2\right)\right)}^{L}(X,Y).$$ Moreover $$\pi_{\left(p_2,\mathfrak{m}^L\left(q_2;q_2\right)\right)}^{L}(T)\leq\pi_{\left(p_1,\mathfrak{m}^L\left(q_1;q_1\right)\right)}^{L}(T)$$ for every $T\in\Pi_{\left(p_1,\mathfrak{m}^L\left(q_1;q_1\right)\right)}^{L}(X,Y)$.
\end{Corollary}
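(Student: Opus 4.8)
The plan is to derive this Corollary directly from the preceding Proposition by specializing $q_j = s_j$ for $j=1,2$. First I would check that the hypotheses of the Proposition are satisfied under the Corollary's assumptions. With $s_j := q_j$ we have $0 < q_1 \le q_2$, so $0 < s_1 \le s_2$ holds; the condition $q_j \le s_j$ becomes $q_j \le q_j$, which is trivially true; and $q_j \le p_j$ is assumed. It remains to verify the chain of reciprocal inequalities
$$\frac{1}{q_1}-\frac{1}{q_2}\le\frac{1}{s_1}-\frac{1}{s_2}\le\frac{1}{p_1}-\frac{1}{p_2}.$$
With $s_j = q_j$ the middle quantity equals the left quantity, so the first inequality is an equality, and the second inequality is exactly the hypothesis $\frac{1}{q_1}-\frac{1}{q_2}\le\frac{1}{p_1}-\frac{1}{p_2}$ of the Corollary. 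Hence all hypotheses of the Proposition hold.

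Next I would invoke the earlier Concluding Remark that for $q = s$ one has $\mathfrak{M}_{(q;q)}^{L}(\mathbb{R}\times X\times X) = \ell_q^{L,w}(\mathbb{R}\times X\times X)$ with $\mathfrak{m}_{(q;q)}^{L}(\sigma,x',x'') = \|(\sigma,x',x'')|\ell_q^{L,w}\|$. Therefore the class $\Pi_{\left(p_j,\mathfrak{m}^L\left(s_j;q_j\right)\right)}^{L}(X,Y)$ with $s_j = q_j$ is literally the class $\Pi_{\left(p_j,\mathfrak{m}^L\left(q_j;q_j\right)\right)}^{L}(X,Y)$ appearing in the Corollary, and the corresponding summing norms coincide. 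So the inclusion $\Pi_{\left(p_1,\mathfrak{m}^L\left(s_1;q_1\right)\right)}^{L}(X,Y)\subset\Pi_{\left(p_2,\mathfrak{m}^L\left(s_2;q_2\right)\right)}^{L}(X,Y)$ delivered by the Proposition becomes precisely $\Pi_{\left(p_1,\mathfrak{m}^L\left(q_1;q_1\right)\right)}^{L}(X,Y)\subset\Pi_{\left(p_2,\mathfrak{m}^L\left(q_2;q_2\right)\right)}^{L}(X,Y)$, and the norm estimate $\pi_{\left(p_2,\mathfrak{m}^L\left(s_2;q_2\right)\right)}^{L}(T)\le\pi_{\left(p_1,\mathfrak{m}^L\left(s_1;q_1\right)\right)}^{L}(T)$ becomes the claimed $\pi_{\left(p_2,\mathfrak{m}^L\left(q_2;q_2\right)\right)}^{L}(T)\le\pi_{\left(p_1,\mathfrak{m}^L\left(q_1;q_1\right)\right)}^{L}(T)$.

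There is essentially no obstacle here: the argument is a pure specialization, and the only thing one must be slightly careful about is the bookkeeping of the indices $s'_j(q_j)$ — but with $s_j = q_j$ the conjugate index $s'_j(q_j)$ is forced to be $\infty$ (since $\frac{1}{s'_j(q_j)} = \frac{1}{q_j} - \frac{1}{s_j} = 0$), which is exactly why the mixed norm collapses to the weak $\ell_q^{L,w}$-norm, consistent with the Concluding Remark cited above. Thus I would simply write: this is an immediate consequence of the previous Proposition upon setting $s_j = q_j$ for $j = 1,2$ and using $\mathfrak{m}_{(q;q)}^{L} = \|\cdot\,|\ell_q^{L,w}\|$. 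If a self-contained argument were preferred, one could instead repeat the Proposition's proof verbatim with $\ell_{s_j}^{L,w}$ replaced throughout by $\ell_{q_j}^{L,w}$ and the factorizing scalar sequence $\lambda$ taken with $\|\lambda|\ell_\infty\|$, but invoking the Proposition is cleaner.
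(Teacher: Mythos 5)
Your proposal is correct: with $s_j:=q_j$ all hypotheses of the preceding Proposition are met (the middle inequality in the chain becomes an equality), and by Concluding Remarks~\ref{66} the mixed norm collapses, $\mathfrak{m}_{(q;q)}^{L}(\sigma,x',x'')=\left\|(\sigma,x',x'')\Big|\ell_q^{L,w}\right\|$, so the Proposition's conclusion is literally the Corollary. This is exactly the specialization the paper announces (``in the case that $q_j=s_j$, this result gives''), but the paper does not merely cite the Proposition: it writes out a short direct proof, setting $\frac{1}{q}=\frac{1}{q_1}-\frac{1}{q_2}$ and $\frac{1}{p}=\frac{1}{p_1}-\frac{1}{p_2}$, inserting a weight sequence $\alpha$, estimating $\left\|(\alpha\cdot\sigma,Tx',Tx'')\Big|\ell_{p_1}\right\|$ by the $\left(p_1,\mathfrak{m}^L\left(q_1;q_1\right)\right)$--summing norm, and applying H\"older together with $\left\|\alpha\Big|\ell_p\right\|\leq\left\|\alpha\Big|\ell_q\right\|$ (from $p\leq q$) before taking the supremum over $\alpha$ to pass from $\ell_{p_1}$ to $\ell_{p_2}$. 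The difference is small but real: your route is shorter and avoids duplicating work, at the cost of relying on the Proposition in its degenerate regime $s'_j(q_j)=\infty$ (where the factorization sequence $\lambda$ and the infimum over it are vacuous), which you correctly flag; the paper's direct argument buys a self-contained proof in which no factorization sequence appears at all, since the $(q;q)$ mixed norm is just the weak norm. Either way the norm inequality $\pi_{\left(p_2,\mathfrak{m}^L\left(q_2;q_2\right)\right)}^{L}(T)\leq\pi_{\left(p_1,\mathfrak{m}^L\left(q_1;q_1\right)\right)}^{L}(T)$ follows as you state, so there is no gap.
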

 
\begin{proof}
We define $\frac{1}{q}=\frac{1}{q_1}-\frac{1}{q_2}$ and $\frac{1}{p}=\frac{1}{p_1}-\frac{1}{p_2}$. Then we have $p\leq q$. Let $T$ be an arbitrary operator in $\Pi_{\left(p_1,\mathfrak{m}^L\left(q_1;q_1\right)\right)}^{L}(X,Y)$; $\alpha\subset\mathbb{R}$. We have
\begin{align}
\left\|(\alpha\cdot\sigma,Tx',Tx'')\Big|\ell_{p_1}\right\|&\leq\pi_{\left(p_1,\mathfrak{m}^L\left(q_1;q_1\right)\right)}^{L}(T)\cdot\mathfrak{m}_{(q_1;q_1)}^{L}(\alpha\cdot\sigma,x',x'') \nonumber \\
&=\pi_{\left(p_1,\mathfrak{m}^L\left(q_1;q_1\right)\right)}^{L}(T)\cdot\left\|(\alpha\cdot\sigma,x',x'')\Big|\ell_{q_1}^{L,w}\right\|. \nonumber 
\end{align}
By using the H\"older inequality, we obtain
\begin{align}
\left\|(\alpha\cdot\sigma,Tx',Tx'')\Big|\ell_{p_1}\right\|&\leq\pi_{\left(p_1,\mathfrak{m}^L\left(q_1;q_1\right)\right)}^{L}(T) \cdot\left\|\alpha\Big|\ell_{q}\right\|\cdot\left\|(\sigma,x',x'')\Big|\ell_{q_2}^{L,w}\right\| \nonumber \\
&\leq\pi_{\left(p_1,\mathfrak{m}^L\left(q_1;q_1\right)\right)}^{L}(T)\cdot\left\|\alpha\Big|\ell_{p}\right\|\cdot\left\|(\sigma,x',x'')\Big|\ell_{q_2}^{L,w}\right\|. \nonumber
\end{align}
Then
$$\left\|(\sigma,Tx',Tx'')\Big|\ell_{p_2}\right\|\leq\pi_{\left(p_1,\mathfrak{m}^L\left(q_1;q_1\right)\right)}^{L}(T)\cdot\mathfrak{m}_{(q_2;q_2)}^{L}(\sigma,x',x'').$$ Moreover, it follows from Definition \ref{eight} that
$$\pi_{\left(p_2,\mathfrak{m}^L\left(q_2;q_2\right)\right)}^{L}(T)\leq\pi_{\left(p_1,\mathfrak{m}^L\left(q_1;q_1\right)\right)}^{L}(T).$$ 
\end{proof}

\begin{Proposition}\label{see}
If $1\leq s_2\leq s_1\leq\infty$, $0<p_j\leq s_j\leq\infty$, $0<p_1\leq p_2$, $0< r_1\leq r_2$, $r_j\leq p_j$, $j=1,2$ and $\frac{1}{r_1}-\frac{1}{r_2}\leq\frac{1}{p_1}-\frac{1}{p_2}$, then $$\Pi_{\left(\mathfrak{m}^L\left(s_1;p_1\right),r_1\right)}^{L}(X,Y)\subset\Pi_{\left(\mathfrak{m}^L\left(s_2;p_2\right),r_2\right)}^{L}(X,Y).$$
Moreover $$\pi_{\left(\mathfrak{m}^L\left(s_2;p_2\right),r_2\right)}^{L}(S)\leq\pi_{\left(\mathfrak{m}^L\left(s_1;p_1\right),r_1\right)}^{L}(S)$$
for every $S\in\Pi_{\left(\mathfrak{m}^L\left(s_1;p_1\right),r_1\right)}^{L}(X,Y).$
\end{Proposition}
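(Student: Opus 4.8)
The plan is to factor the desired inclusion through the intermediate class $\Pi_{\left(\mathfrak{m}^L\left(s_2;p_1\right),r_1\right)}^{L}(X,Y)$, treating the change of the index $s$ first and the change of the pair $(p,r)$ afterwards. For the first step I would note that $p_1\le p_2\le s_2\le s_1$, so the third inclusion listed in Concluding Remarks \ref{66}(4) applies with $q=p_1$ and gives $\mathfrak{m}_{(s_2;p_1)}^{L}(\sigma,y',y'')\le\mathfrak{m}_{(s_1;p_1)}^{L}(\sigma,y',y'')$ for every sequence $(\sigma,y',y'')$; hence each $S\in\Pi_{\left(\mathfrak{m}^L\left(s_1;p_1\right),r_1\right)}^{L}(X,Y)$ already satisfies the defining inequality of $\Pi_{\left(\mathfrak{m}^L\left(s_2;p_1\right),r_1\right)}^{L}(X,Y)$, with $\pi_{\left(\mathfrak{m}^L\left(s_2;p_1\right),r_1\right)}^{L}(S)\le\pi_{\left(\mathfrak{m}^L\left(s_1;p_1\right),r_1\right)}^{L}(S)$. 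It therefore suffices to prove the statement when $s_1=s_2=:s$ (so $p_1\le p_2\le s$), which I assume from now on.

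For the second step, set $\tfrac{1}{p}:=\tfrac{1}{p_1}-\tfrac{1}{p_2}$ and $\tfrac{1}{r}:=\tfrac{1}{r_1}-\tfrac{1}{r_2}$, so that the hypothesis $\tfrac{1}{r_1}-\tfrac{1}{r_2}\le\tfrac{1}{p_1}-\tfrac{1}{p_2}$ reads $p\le r$. Fix finite sequences $\sigma,x',x''$ together with an auxiliary finite scalar sequence $\alpha$ satisfying $\norm{\alpha\,|\,\ell_{p}}\le 1$. Feeding $(\alpha\cdot\sigma,x',x'')$ into the definition of a Lipschitz $\left(\mathfrak{m}^L\left(s;p_1\right),r_1\right)$-summing map gives $\mathfrak{m}_{(s;p_1)}^{L}(\alpha\cdot\sigma,Sx',Sx'')\le\pi_{\left(\mathfrak{m}^L\left(s;p_1\right),r_1\right)}^{L}(S)\cdot\norm{(\alpha\cdot\sigma,x',x'')\,|\,\ell_{r_1}^{L,w}}$. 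Writing $|\alpha_j\sigma_j|^{r_1}=|\alpha_j|^{r_1}\,|\sigma_j|^{r_1}$ inside the supremum over $f\in B_{X^{\#}}$ and applying H\"older's inequality with exponents $r/r_1$ and $r_2/r_1$ bounds $\norm{(\alpha\cdot\sigma,x',x'')\,|\,\ell_{r_1}^{L,w}}$ by $\norm{\alpha\,|\,\ell_{r}}\cdot\norm{(\sigma,x',x'')\,|\,\ell_{r_2}^{L,w}}$; and since $p\le r$ forces $\norm{\alpha\,|\,\ell_{r}}\le\norm{\alpha\,|\,\ell_{p}}\le 1$ for finite sequences, we obtain $\mathfrak{m}_{(s;p_1)}^{L}(\alpha\cdot\sigma,Sx',Sx'')\le\pi_{\left(\mathfrak{m}^L\left(s;p_1\right),r_1\right)}^{L}(S)\cdot\norm{(\sigma,x',x'')\,|\,\ell_{r_2}^{L,w}}$ for \emph{every} such $\alpha$.

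The third step removes $\alpha$. By Proposition \ref{two}, $\mathfrak{m}_{(s;p_1)}^{L}(\alpha\cdot\sigma,Sx',Sx'')=\sup_{\mu\in W(B_{Y^{\#}})}\norm{\alpha\cdot\beta_{\mu}\,|\,\ell_{p_1}}$, where $\beta_{\mu,j}=\big(\int_{B_{Y^{\#}}}|\sigma_j|^{s}\,|g(Sx'_j)-g(Sx''_j)|^{s}\,d\mu(g)\big)^{1/s}$ (the factors $|\alpha_j|$ come out of the integrals and then out of the sum over $j$). By the elementary H\"older identity that, for a nonnegative finite sequence $\beta$, $\norm{\beta\,|\,\ell_{p_2}}$ equals the supremum of $\norm{\alpha\cdot\beta\,|\,\ell_{p_1}}$ over all finite $\alpha$ with $\norm{\alpha\,|\,\ell_{p}}\le 1$ (using $\tfrac{1}{p}=\tfrac{1}{p_1}-\tfrac{1}{p_2}$), interchanging the two suprema yields $\sup\{\mathfrak{m}_{(s;p_1)}^{L}(\alpha\cdot\sigma,Sx',Sx''):\norm{\alpha\,|\,\ell_{p}}\le 1\}=\sup_{\mu}\norm{\beta_{\mu}\,|\,\ell_{p_2}}=\mathfrak{m}_{(s;p_2)}^{L}(\sigma,Sx',Sx'')$, the last equality again by Proposition \ref{two}. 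Taking this supremum over $\alpha$ in the bound from the second step gives $\mathfrak{m}_{(s;p_2)}^{L}(\sigma,Sx',Sx'')\le\pi_{\left(\mathfrak{m}^L\left(s;p_1\right),r_1\right)}^{L}(S)\cdot\norm{(\sigma,x',x'')\,|\,\ell_{r_2}^{L,w}}$, i.e. $S\in\Pi_{\left(\mathfrak{m}^L\left(s;p_2\right),r_2\right)}^{L}(X,Y)$ with $\pi_{\left(\mathfrak{m}^L\left(s;p_2\right),r_2\right)}^{L}(S)\le\pi_{\left(\mathfrak{m}^L\left(s;p_1\right),r_1\right)}^{L}(S)$; combining with the first step proves the proposition.

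The step I expect to demand the most care is not any individual estimate but the handling of the degenerate parameter values. Proposition \ref{two} is stated only for $0<q<s<\infty$, so the cases $s=\infty$, $p_1=s$, $p_2=s$ (and $p_2=\infty$) have to be done separately, via the identifications $\mathfrak{m}_{(q;q)}^{L}=\norm{\cdot\,|\,\ell_{q}^{L,w}}$ and $\mathfrak{m}_{(\infty;q)}^{L}=\norm{\cdot\,|\,\ell_{q}}$ recorded in Concluding Remarks \ref{66}(2); in those cases the supremum over $\mu$ in the third step collapses to a single Dirac mass, or disappears, and the computation becomes easier. One should also check along the way that $r_1\le p_1$ and $r_2\le p_2$ are precisely what make the two classes in play well-defined in the sense of Definition \ref{seven}, and that $p\le r$ and $s_2\le s_1$ enter only through the monotonicity $\norm{x\,|\,\ell_{t}}\le\norm{x\,|\,\ell_{u}}$ (for $u\le t$) of $\ell$-norms on finite sequences and through Concluding Remarks \ref{66}(4).
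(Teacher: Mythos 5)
Your argument is correct in substance, but it follows a genuinely different route from the paper. You work directly from Definition \ref{seven}: after reducing to $s_1=s_2$ via the monotonicity of $\mathfrak{m}^{L}_{(s;q)}$ in $s$ (Concluding Remarks \ref{66}(4)), you insert an auxiliary scalar sequence $\alpha$ with $\left\|\alpha\big|\ell_p\right\|\leq 1$, $\tfrac1p=\tfrac1{p_1}-\tfrac1{p_2}$, use H\"older on the weak $\ell_{r_1}$-norm together with $p\leq r$, and then remove $\alpha$ by the converse-H\"older identity $\sup_{\|\alpha|\ell_p\|\leq1}\left\|\alpha\cdot\beta\big|\ell_{p_1}\right\|=\left\|\beta\big|\ell_{p_2}\right\|$ applied, via Proposition \ref{two}, to $\beta_{\mu,j}=\big(\int_{B_{Y^{\#}}}|\sigma_j|^{s}|g(Sx'_j)-g(Sx''_j)|^{s}d\mu(g)\big)^{1/s}$; interchanging the two suprema is harmless. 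The paper instead argues indirectly: it composes $S$ with an arbitrary $T\in\Pi^{L}_{(s_2,\mathfrak{m}^L(s_2;s_2))}(Y,Z)$, passes through Corollary \ref{zoo} and the composition Theorem \ref{manaf1} to place $T\circ S$ in $\Pi^{L}_{(p_2,\mathfrak{m}^L(r_2;r_2))}$, then specializes $T$ to the evaluation map $y\mapsto(\langle g_k,y\rangle)_k$ into $\ell_{s_2}$ (with the norm bound from the proof of Theorem \ref{c} and Theorem \ref{d}), and finally invokes the characterization Theorem \ref{a} via (\ref{moon}). Your approach is more elementary and self-contained (only Proposition \ref{two} and H\"older), and it makes transparent exactly where each hypothesis enters; the paper's approach buys economy by reusing its already-established machinery (Theorem \ref{a}, Theorem \ref{manaf1}, Theorem \ref{d}, Corollary \ref{zoo}) and simultaneously exhibits the compatibility of the inclusion with composition. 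Two small points to keep in mind: in the third step you only need the inequality $\sup_{\alpha}\mathfrak{m}^{L}_{(s;p_1)}(\alpha\cdot\sigma,Sx',Sx'')\geq\mathfrak{m}^{L}_{(s;p_2)}(\sigma,Sx',Sx'')$, not the full equality; and the degenerate cases you flag ($s=\infty$, $p_2=s_2$, $p_1=p_2$, $r_1=r_2$) do need the separate treatment you sketch, since Proposition \ref{two} is stated only for $0<q<s<\infty$ — your indicated reductions via Concluding Remarks \ref{66}(2) and Dirac measures do close them.
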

\begin{proof}
Let $S$ be an arbitrary operator in $\Pi_{\left(\mathfrak{m}^L\left(s_1;p_1\right),r_1\right)}^{L}(X,Y)$ and let $T$ be an arbitrary operator in $\Pi_{\left(s_2,\mathfrak{m}^L\left(s_2;s_2\right)\right)}^{L}(Y,Z)$. Then from Corollary \ref{zoo}, we have $T\in\Pi_{\left(s_1,\mathfrak{m}^L\left(s_1;s_1\right)\right)}^{L}(Y,Z)$ with $$\pi_{\left(s_1,\mathfrak{m}^L\left(s_1;s_1\right)\right)}^{L}(T)\leq\pi_{\left(s_2,\mathfrak{m}^L\left(s_2;s_2\right)\right)}^{L}(T).$$ Hence from Theorem \ref{manaf1}, we obtain $T\circ S\in\Pi_{\left(p_1,\mathfrak{m}^L\left(r_1;r_1\right)\right)}^{L}(X,F)$ with
\begin{align}
\pi_{\left(p_1,\mathfrak{m}^L\left(r_1;r_1\right)\right)}^{L}(T\circ S)&\leq\pi_{\left(s_1,\mathfrak{m}^L\left(s_1;s_1\right)\right)}^{L}(T)\cdot\pi_{\left(\mathfrak{m}^L\left(s_1;p_1\right),r_1\right)}^{L}(S)\nonumber \\
&\leq\pi_{\left(s_2,\mathfrak{m}^L\left(s_2;s_2\right)\right)}^{L}(T)\cdot\pi_{\left(\mathfrak{m}^L\left(s_1;p_1\right),r_1\right)}^{L}(S).\nonumber 
\end{align}
From Corollary \ref{zoo}, we get $T\circ S\in\Pi_{\left(p_2,\mathfrak{m}^L\left(r_2;r_2\right)\right)}^{L}(X,F)$ with $$\pi_{\left(p_2,\mathfrak{m}^L\left(r_2;r_2\right)\right)}^{L}(T\circ S)\leq\pi_{\left(p_1,\mathfrak{m}^L\left(r_1;r_1\right)\right)}^{L}(T\circ S).$$
Now for $(g_k)_{k=1}^{n}\subset Y^{\#}$ and $n\in\mathbb{N}$. We define $T\in\Pi_{\left(p_2,\mathfrak{m}^L\left(s_2;p_2\right)\right)}^{L}(Y,\ell_{s_2})$ by the rule $$T(y)=\Big(\left\langle g_{1},y\right\rangle_{(Y^{\#},Y)},\cdots,\left\langle g_{n},y\right\rangle_{(Y^{\#},Y)},0,0,0,\cdots\Big)$$
with $$\pi_{\left(p_2,\mathfrak{m}^L\left(s_2;p_2\right)\right)}^{L}(T)\leq\left\|(g_k)_{k=1}^{n}\Big|\ell_{s_2}(Y^{\#})\right\|.$$
It follows from Theorem \ref{d} that $$\pi_{\left(s_2,\mathfrak{m}^L\left(s_2;s_2\right)\right)}^{L}(T)\leq\left\|(g_k)_{k=1}^{n}\Big|\ell_{s_2}(Y^{\#})\right\|.$$
Then from (\ref{moon}) we have
\begin{align}
\Bigg[\sum\limits_{j=1}^{m}\left|\sigma_j\right|^{p_2}\bigg[\sum\limits_{k=1}^{n}&\left|\left\langle g_k,Sx'_j\right\rangle_{(Y^{\#},Y)}-\left\langle g_k,Sx''_j\right\rangle_{(Y^{\#},Y)}\right|^{s_2}\bigg]^\frac{p_2}{s_2}\Bigg]^\frac{1}{p_2}\nonumber \\
&\leq\pi_{\left(p_2,\mathfrak{m}^L\left(r_2;r_2\right)\right)}^{L}(T\circ S)\cdot\left\|(\sigma,x',x'')\Big|\ell_{r_2}^{L,w}\right\|\nonumber \\
&\leq\pi_{\left(\mathfrak{m}^L\left(s_1;p_1\right),r_1\right)}^{L}(S)\cdot\left\|(g_k)_{k=1}^{n}\Big|\ell_{s_2}(Y^{\#})\right\|\cdot\left\|(\sigma,x',x'')\Big|\ell_{r_2}^{L,w}\right\|. \nonumber
\end{align}
From Theorem \ref{a}, we obtain $S\in\Pi_{\left(\mathfrak{m}^L\left(s_2;p_2\right),r_2\right)}^{L}(X,Y)$ with $$\pi_{\left(\mathfrak{m}^L\left(s_2;p_2\right),r_2\right)}^{L}(S)\leq\pi_{\left(\mathfrak{m}^L\left(s_1;p_1\right),r_1\right)}^{L}(S).$$  
\end{proof}

In the case that $p_j=r_j$, for $j=1, 2$, this result gives

\begin{Corollary}
If $1\leq s_2\leq s_1\leq\infty$, $0<p_j\leq s_j\leq\infty$ and $0<p_1\leq p_2$, then $$\Pi_{\left(\mathfrak{m}^L\left(s_1;p_1\right),p_1\right)}^{L}(X,Y)\subset\Pi_{\left(\mathfrak{m}^L\left(s_2;p_2\right),p_2\right)}^{L}(X,Y).$$
Moreover $$\pi_{\left(\mathfrak{m}^L\left(s_2;p_2\right),p_2\right)}^{L}(S)\leq\pi_{\left(\mathfrak{m}^L\left(s_1;p_1\right),p_1\right)}^{L}(S)$$
for every $S\in\Pi_{\left(\mathfrak{m}^L\left(s_1;p_1\right),p_1\right)}^{L}(X,Y)$.
\end{Corollary}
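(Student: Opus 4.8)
The plan is to obtain this statement as an immediate specialization of Proposition \ref{see}. That proposition establishes the inclusion $\Pi_{\left(\mathfrak{m}^L\left(s_1;p_1\right),r_1\right)}^{L}(X,Y)\subset\Pi_{\left(\mathfrak{m}^L\left(s_2;p_2\right),r_2\right)}^{L}(X,Y)$ together with the bound on the corresponding summing norms, under the hypotheses $1\leq s_2\leq s_1\leq\infty$, $0<p_j\leq s_j\leq\infty$, $0<p_1\leq p_2$, $0<r_1\leq r_2$, $r_j\leq p_j$ for $j=1,2$, and $\frac{1}{r_1}-\frac{1}{r_2}\leq\frac{1}{p_1}-\frac{1}{p_2}$. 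So the first step is simply to substitute $r_1=p_1$ and $r_2=p_2$ and to verify that every hypothesis of Proposition \ref{see} survives this substitution.

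Carrying out the check: the conditions $1\leq s_2\leq s_1\leq\infty$, $0<p_j\leq s_j\leq\infty$, and $0<p_1\leq p_2$ are exactly the hypotheses assumed here. The condition $0<r_1\leq r_2$ becomes $0<p_1\leq p_2$, which holds by assumption; the condition $r_j\leq p_j$ becomes $p_j\leq p_j$, which is trivially true; and the index inequality $\frac{1}{r_1}-\frac{1}{r_2}\leq\frac{1}{p_1}-\frac{1}{p_2}$ becomes an equality, hence also holds. Therefore all hypotheses of Proposition \ref{see} are met, and applying it with this choice of parameters yields both the inclusion $\Pi_{\left(\mathfrak{m}^L\left(s_1;p_1\right),p_1\right)}^{L}(X,Y)\subset\Pi_{\left(\mathfrak{m}^L\left(s_2;p_2\right),p_2\right)}^{L}(X,Y)$ and the inequality $\pi_{\left(\mathfrak{m}^L\left(s_2;p_2\right),p_2\right)}^{L}(S)\leq\pi_{\left(\mathfrak{m}^L\left(s_1;p_1\right),p_1\right)}^{L}(S)$.

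Since the argument is a pure specialization of an already proved proposition, I do not expect any real obstacle; the only point that wants a moment's care is confirming that the two ``boundary'' constraints of Proposition \ref{see}, namely $r_j\leq p_j$ and the reciprocal inequality, collapse to equalities rather than becoming violated, which they do. One could instead reprove the statement from scratch by rerunning the proof of Proposition \ref{see} with $r_j$ replaced by $p_j$ throughout --- invoking Corollary \ref{zoo}, Theorem \ref{manaf1}, Theorem \ref{d}, and the characterization in Theorem \ref{a} --- but this would merely duplicate the earlier argument, so the specialization route is preferable.
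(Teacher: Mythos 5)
Your proposal is correct and is exactly the paper's route: the corollary is stated as the specialization of Proposition \ref{see} to $r_j=p_j$, and your verification that the hypotheses $0<r_1\leq r_2$, $r_j\leq p_j$ and $\frac{1}{r_1}-\frac{1}{r_2}\leq\frac{1}{p_1}-\frac{1}{p_2}$ collapse to trivially satisfied conditions is all that is needed.
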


\section{Chevet$-$Saphar spaces for Lipschitz $\left(\mathfrak{m}^L\left(s;p\right),r\right)-$summing maps}

In this section, the letters $s$, $p$, $r$ will designate elements of $[1,\infty]$; $s'$, $p'$ and $r'$ denote the exponent conjugate to $s$, $p$ and $r$, respectively. 

We start by recalling the definitions, basic properties and theorems of the Chevet$-$Saphar spaces in \cite{CD11}.

 An $E-$valued molecule on $X$ is a finitely supported function $\textbf{m}$ from $X$ into $E$ such that $\sum\limits_{x\in X} \textbf{m}(x)=0$. The vector space of all $E-$valued molecules on $X$ is denoted by $\mathcal{M}(X,E)$. 

Given $x_{1}$, $x_{2}\in X$, define $\textbf{m}_{x_{1}x_{2}}=\chi_{x_{1}}-\chi_{x_{2}}$, where $\chi_{x_{i}}$ stands for the characteristic function on $X$, $i=1,2$. The simplest nonzero molecules, i.e. those of the form $v\textbf{m}_{x_{1}x_{2}}$, for some $x_{1}, x_{2}\in X$ and $v\in E$, are called atoms. Note that any molecule may be expressed (in a non unique way) as a finite sum of atoms. 
 
 Similarly to \cite{CD11}, we define the $p-$th Chevet$-$Saphar norm $cs_{p}$ of a molecule $\textbf{m}$
$$cs_{p}(\textbf{m})=\inf\left\|\sigma\cdot\left\|v\right\|\Big|\ell_{p}\right\|\left\|(\sigma^{-1},x',x'')\Big|\ell_{p^{'}}^{L,w}\right\|$$ where the infimum is taken over all representations $\textbf{m}=\sum\limits_{j=1}^{m} v_{j}\textbf{m}_{x'_j x''_j}$ and $\sigma\subset\mathbb{R}$. 

  Similarly to \cite[Theorem 4.1]{CD11}, the vector space of $E-$valued molecules on $X$, endowed with the norm $cs_p(\cdot)$, forms a normed space denoted by $\mathcal CS_p(X,E)$. 
  
  There is a canonical way of inducing a pairing between $E-$valued molecules on $X$ and functions from $X$ to $E^*$. Given $\textbf{m}\in\mathcal{M}(X,E)$ and a function $T$ from $X$ into $E^*$, this pairing is defined by the rule $$\left\langle T,\textbf{m}\right\rangle=\sum\limits_{x\in X}\left\langle T(x),\textbf{m}(x)\right\rangle.$$ If we know an expression of the molecule as a sum of atoms, say $\textbf{m}=\sum\limits_{j=1}^{m} v_{j}\textbf{m}_{x'_j x''_j}$, then
\begin{align}
\left\langle T,\textbf{m}\right\rangle=\sum\limits_{j=1}^{m}\left\langle Tx'_j-Tx''_j,v_j\right\rangle. \label{ten}
\end{align} 

Also, similarly to \cite [Theorem 4.3]{CD11}, the dual space of $\mathcal CS_p(X,E)$ is canonically identified with the space of Lipschitz $(p',\mathfrak{m}^L(p';p'))-$summing operators from $X$ into $E^*$ by the pairing formula defined in (\ref{ten}). 

For an arbitrary molecule $\textbf{m}\in\mathcal{M}(X,E)$, let us define $$cs_{p',\,r}(\textbf{m})=\inf\left\|\sigma\cdot\left\|v\right\|\Big|\ell_{p'}\right\|\left\|(\sigma^{-1},x',x'')\Big|\ell_{r}^{L,w}\right\|$$ where the infimum is taken over all representations of $\textbf{m}=\sum\limits_{j=1}^{m} v_{j} \textbf{m}_{x'_j x''_j}$ and $\sigma\subset\mathbb{R}$. 
  
 Also, observe that for any Banach space $E$ a Lipschitz map $T$ from $X$ into $Y$ naturally induces a well$-$defined linear map $T_{E}$ from $\mathcal{M}(X,E)$ into $\mathcal{M}(Y,E)$ given by 
$$T_{E}\left(\sum\limits_{j=1}^{m} v_{j}\textbf{m}_{x'_j x''_j}\right)=\sum\limits_{j=1}^{m} v_{j}\textbf{m}_{Tx'_j Tx''_j}.$$ 
 
 Recall that for $0<\beta\leq 1$, a non$-$negative positively homogeneous functional $\rho$ defined on a vector space $U$ is called a $\beta-$seminorm if $\rho(u_1+u_2)^{\beta}\leq\rho(u_1)^{\beta}+\rho(u_2)^{\beta}$ for all $u_1, u_2\in U$. If in addition $\rho$ vanishes only at $0$, it is called a $\beta-$norm.%

\begin{Remark}
\begin{itemize}
\item Recall the definition of the norm $\mu_{p\,,r\,,s}(\cdot)$ in \cite[Sec. 5.1]{CD11}. For the special case $s=\infty$, it is obvious	that $\mu_{p,\,p',\,\infty}(\cdot)=cs_{p}(\cdot)$ and $\mu_{p',\,r,\,\infty}(\cdot)=cs_{p',\,r}(\cdot)$.
\item Recall the definition of Lipschitz $(p,r,s)-$summing maps in \cite [Sec 5.2]{CD11}. For the special case $s=\infty$, we have	$$\Pi_{p,\,r,\,\infty}^{L}(X,E^{\ast})=\Pi_{\left(p,\mathfrak{m}^L\left(r;r\right)\right)}^{L}(X,E^{\ast}).$$
\end{itemize}
\end{Remark}
The next Lemma is a special case of \cite[Theorem 5.1]{CD11}.  

\begin{Lemma}
If $\frac{1}{\beta}=\frac{1}{p'}+\frac{1}{r}\geq 1$, then $\left(\mathcal{M}(X,E),cs_{p',\,r}(\cdot)\right)$ is a $\beta-$normed space.
\end{Lemma}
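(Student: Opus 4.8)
The plan is to show directly that the functional $cs_{p',\,r}(\cdot)$ satisfies the $\beta$-triangle inequality $cs_{p',\,r}(\mathbf{m}_1+\mathbf{m}_2)^{\beta}\le cs_{p',\,r}(\mathbf{m}_1)^{\beta}+cs_{p',\,r}(\mathbf{m}_2)^{\beta}$ once $1/\beta = 1/p'+1/r\ge 1$, and that it vanishes only at the zero molecule; positivity and positive homogeneity are immediate from the definition. The key analytic input is the reverse Minkowski / "superadditivity" phenomenon for $\ell_q$-norms with $0<q\le 1$: if $q=\beta\le 1$ then for non-negative scalar sequences one has $\big\|a+b\,\big|\ell_q\big\|^{q}\ge \|a|\ell_q\|^{q}+\|b|\ell_q\|^{q}$ is false in general, but the relevant statement here goes the other way and is obtained via Hölder. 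Concretely, since $1/p' + 1/r = 1/\beta$, for any two molecules and any choice of near-optimal representations, I would concatenate the representations and estimate the resulting $cs_{p',\,r}$-value.

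First I would fix $\varepsilon>0$ and pick representations $\mathbf{m}_i=\sum_{j} v_j^{(i)}\mathbf{m}_{x'^{(i)}_j x''^{(i)}_j}$ with accompanying scalar sequences $\sigma^{(i)}$ such that $\big\|\sigma^{(i)}\cdot\|v^{(i)}\|\,\big|\ell_{p'}\big\|\cdot\big\|((\sigma^{(i)})^{-1},x'^{(i)},x''^{(i)})\,\big|\ell_r^{L,w}\big\|\le (1+\varepsilon)\,cs_{p',\,r}(\mathbf{m}_i)$, and moreover, by the usual rescaling $\sigma^{(i)}\mapsto t_i\sigma^{(i)}$ (which leaves the product invariant), I may normalise so that $\big\|\sigma^{(i)}\cdot\|v^{(i)}\|\,\big|\ell_{p'}\big\| = \big\|((\sigma^{(i)})^{-1},x'^{(i)},x''^{(i)})\,\big|\ell_r^{L,w}\big\|^{?}$ — more precisely I would balance the two factors so each equals $(1+\varepsilon)^{1/2}\,cs_{p',\,r}(\mathbf{m}_i)^{1/2}\cdot(\text{something})$; the cleanest route is to arrange $\big\|\sigma^{(i)}\cdot\|v^{(i)}\|\,\big|\ell_{p'}\big\|^{p'}$ and $\big\|((\sigma^{(i)})^{-1},\cdot)\,\big|\ell_r^{L,w}\big\|^{r}$ to be comparable. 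Then the concatenated data $(\sigma^{(1)},\sigma^{(2)})$, $(v^{(1)},v^{(2)})$, etc., is a legitimate representation of $\mathbf{m}_1+\mathbf{m}_2$, so $cs_{p',\,r}(\mathbf{m}_1+\mathbf{m}_2)$ is bounded by the $\ell_{p'}$-norm of the concatenated weight sequence times the $\ell_r^{L,w}$-norm of the concatenated "reciprocal" sequence. Now the $\ell_{p'}$ (resp.\ $\ell_r^{L,w}$) norm of a concatenation is the $\ell_{p'}$ (resp.\ $\ell_r$) combination of the two pieces, and applying Hölder's inequality in the form $\sum a_i b_i \le (\sum a_i^{p'/\beta})^{\beta/p'}(\sum b_i^{r/\beta})^{\beta/r}$ — valid precisely because $\beta/p' + \beta/r = 1$ — to the two-term "sequences" $a_i = \|\sigma^{(i)}\|v^{(i)}\|\,|\ell_{p'}\|$, $b_i=\|((\sigma^{(i)})^{-1},\cdot)\,|\ell_r^{L,w}\|$ yields the bound $\big(\sum_i cs_{p',\,r}(\mathbf{m}_i)^{\beta}\big)^{1/\beta}$ up to the factor $(1+\varepsilon)$. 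Letting $\varepsilon\downarrow 0$ gives the $\beta$-inequality.

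For the non-degeneracy, I would argue that if $\mathbf{m}\neq 0$ then, evaluating $\langle T,\mathbf{m}\rangle$ for a suitable $T\in X^{\#}$-valued (scalar) function via the pairing (\ref{ten}) and using the inclusion $\mathfrak{M}_{(r;p')}^{L}\subset\ell_{r'(p')}$-type estimates already recorded in Concluding Remarks \ref{66}, one obtains $|\langle T,\mathbf{m}\rangle|\le C\, cs_{p',\,r}(\mathbf{m})$ while $\langle T,\mathbf{m}\rangle\neq 0$ for an appropriate choice of $T$; hence $cs_{p',\,r}(\mathbf{m})>0$. Alternatively, one compares $cs_{p',\,r}$ with $cs_{p'}$ (which is already known to be a genuine norm by the analogue of \cite[Theorem 4.1]{CD11}) using the inclusion $\ell_{p'}^{L,w}\subset\ell_r^{L,w}$-type monotonicity when $r\le p'$, or a Hölder insertion of a constant sequence when $r\ge p'$, to get $cs_{p',\,r}(\mathbf{m})\ge c\cdot cs_{p'}(\mathbf{m})$ or $\ge c\cdot cs_{r}(\mathbf{m})$ for a positive constant, which forces $cs_{p',\,r}(\mathbf{m})>0$.

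The main obstacle is the balancing/normalisation step: because there is no linear structure on the triples and the two factors in the definition of $cs_{p',\,r}$ scale oppositely under $\sigma\mapsto t\sigma$, one must be careful that the concatenated representation is genuinely admissible and that the Hölder exponents line up — this is exactly where the hypothesis $1/\beta = 1/p'+1/r\ge 1$ (so that $\beta\le 1$ and both $p'/\beta,\,r/\beta\ge 1$) is used, and it is the only place the hypothesis enters. Everything else (positive homogeneity, the concatenation being a representation of the sum, the behaviour of $\ell_q$-norms under concatenation) is routine.
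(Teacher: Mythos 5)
The paper never proves this lemma: it is simply quoted as the special case $s=\infty$ of \cite[Theorem 5.1]{CD11}, so your direct argument is in effect a reconstruction of the proof of the cited result rather than a parallel to anything written here. Your skeleton is the right one: positive homogeneity is immediate, the $\beta$-triangle inequality should come from concatenating near-optimal representations of $\mathbf{m}_1$ and $\mathbf{m}_2$ after rebalancing each one via the scaling $\sigma\mapsto t\sigma$ (which multiplies the $\ell_{p'}$-factor by $t$ and the $\ell_r^{L,w}$-factor by $1/t$, leaving the product invariant), and non-degeneracy should come from dominating a functional or norm already known to separate molecules.

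Two steps of your execution are, as written, wrong and need repair. First, the H\"older step does not do the work you claim. Writing $A_i=\left\|\sigma^{(i)}\cdot\|v^{(i)}\|\,\big|\ell_{p'}\right\|$ and $B_i=\left\|((\sigma^{(i)})^{-1},x'^{(i)},x''^{(i)})\big|\ell_r^{L,w}\right\|$, the concatenated representation gives $cs_{p',r}(\mathbf{m}_1+\mathbf{m}_2)\le (A_1^{p'}+A_2^{p'})^{1/p'}(B_1^{r}+B_2^{r})^{1/r}$, whereas your H\"older inequality controls $\sum_i A_iB_i$ (or, applied to $A_i^\beta,B_i^\beta$, it bounds $\sum_i(A_iB_i)^\beta$ \emph{above} by the $\beta$-th power of the concatenation value, i.e. it points in the opposite direction); and without balancing, the inequality $(A_1^{p'}+A_2^{p'})^{1/p'}(B_1^{r}+B_2^{r})^{1/r}\le\big(\sum_i(A_iB_i)^\beta\big)^{1/\beta}$ is simply false (take $A_1$ large, $B_1$ small and vice versa for $i=2$). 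What closes the argument is exactly the normalisation you mention only in passing: rescale so that $A_i^{p'}=B_i^{r}=:t_i$; then $A_iB_i=t_i^{1/\beta}\le(1+\varepsilon)\,cs_{p',r}(\mathbf{m}_i)$ and the concatenation bound becomes $(t_1+t_2)^{1/p'+1/r}=(t_1+t_2)^{1/\beta}\le(1+\varepsilon)\big(cs_{p',r}(\mathbf{m}_1)^{\beta}+cs_{p',r}(\mathbf{m}_2)^{\beta}\big)^{1/\beta}$ — no H\"older is needed, and your first guess of making the two factors equal to $(1+\varepsilon)^{1/2}cs_{p',r}(\mathbf{m}_i)^{1/2}$ would produce the wrong exponents. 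Second, for non-degeneracy the clean route is the comparison $cs_{p'}(\mathbf{m})\le cs_{p',r}(\mathbf{m})$: the hypothesis forces $r\le p$ (since $1/r\ge 1-1/p'=1/p$), so for each $f\in B_{X^{\#}}$ the $\ell_r$-sum dominates the $\ell_p$-sum occurring in $cs_{p'}$, and $cs_{p'}$ is a genuine norm by the analogue of \cite[Theorem 4.1]{CD11}. Your stated condition ``$r\le p'$'' involves the wrong parameter, and the inclusion results from the Concluding Remarks of Section 3 are not the relevant tool; your alternative pairing argument can be made to work, but it too needs the observation $r\le p$ together with H\"older with exponents $p,p'$. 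With these corrections your proof is complete and is, in substance, the proof of Ch\'avez-Dom\'inguez's theorem specialised to $s=\infty$.
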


\begin{Remark}
The $\beta-$normed space $\left(\mathcal{M}(X,E),cs_{p',\,r}(\cdot)\right)$ will be denoted by $\mathcal{CS}_{p',\,r}(X,E)$.
\end{Remark}

 The next Proposition is a special case of \cite[Theorem 5.2]{CD11}. 

\begin{Proposition}
The spaces $\mathcal{CS}_{p',\,r}(X,E)^{\ast}$ and $\Pi_{\left(p,\mathfrak{m}^L\left(r;r\right)\right)}^{L}(X,E^{\ast})$ are isometrically isomorphic via
the canonical pairing defined in (\ref{ten}). 
\end{Proposition}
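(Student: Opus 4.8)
The plan is to realise the canonical pairing (\ref{ten}) itself as the asserted isometry, specialising the argument behind \cite[Theorem 5.2]{CD11} to $s=\infty$. Throughout we keep the standing hypothesis $\frac{1}{p'}+\frac{1}{r}\geq 1$, under which $cs_{p',\,r}(\cdot)$ is a $\beta$-norm and $\mathcal{CS}_{p',\,r}(X,E)^{\ast}$ is a Banach space with the usual dual norm. One must produce two contractive maps: $T\mapsto\Phi_T$, where $\Phi_T(\mathbf{m})=\langle T,\mathbf{m}\rangle$, from $\Pi_{\left(p,\mathfrak{m}^L\left(r;r\right)\right)}^{L}(X,E^{\ast})$ into $\mathcal{CS}_{p',\,r}(X,E)^{\ast}$, and an inverse $\phi\mapsto T_\phi$; the isometry then follows by combining the two norm estimates.

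For the first map, fix a base-point preserving $T\in\Pi_{\left(p,\mathfrak{m}^L\left(r;r\right)\right)}^{L}(X,E^{\ast})$, a representation $\mathbf{m}=\sum_{j=1}^{m}v_j\mathbf{m}_{x'_j x''_j}$ and a scalar sequence $\sigma$ with nonzero entries. Using (\ref{ten}), the estimate $\langle Tx'_j-Tx''_j,v_j\rangle\leq\norm{Tx'_j-Tx''_j}\,\norm{v_j}$, and the H\"older inequality split along the exponents $p$ and $p'$,
$$|\langle T,\mathbf{m}\rangle|\leq\sum_{j=1}^{m}\big(|\sigma_j|^{-1}\norm{Tx'_j-Tx''_j}\big)\big(|\sigma_j|\,\norm{v_j}\big)\leq\norm{(\sigma^{-1},Tx',Tx'')\Big|\ell_p}\cdot\norm{\sigma\cdot\norm{v}\Big|\ell_{p'}}.$$
Now apply Definition \ref{eight} together with the identity $\mathfrak{m}_{(r;r)}^{L}(\sigma^{-1},x',x'')=\norm{(\sigma^{-1},x',x'')\Big|\ell_r^{L,w}}$ from Concluding Remarks \ref{66}, and pass to the infimum over all representations and all $\sigma$, to get $|\langle T,\mathbf{m}\rangle|\leq\pi_{\left(p,\mathfrak{m}^L\left(r;r\right)\right)}^{L}(T)\,cs_{p',\,r}(\mathbf{m})$, i.e. $\Phi_T\in\mathcal{CS}_{p',\,r}(X,E)^{\ast}$ with $\norm{\Phi_T}\leq\pi_{\left(p,\mathfrak{m}^L\left(r;r\right)\right)}^{L}(T)$.

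For the inverse, given $\phi\in\mathcal{CS}_{p',\,r}(X,E)^{\ast}$ define $T_\phi\colon X\to E^{\ast}$ by $\langle T_\phi(x),v\rangle=\phi(v\mathbf{m}_{x x_0})$, $v\in E$. This is linear in $v$, and since the one-atom representation with scaling $1$ yields $cs_{p',\,r}(v\mathbf{m}_{x x_0})\leq\norm{v}\,d_X(x,x_0)$ (using $\sup_{f\in B_{X^{\#}}}|f(x)-f(x_0)|=d_X(x,x_0)$), it is bounded with $\norm{T_\phi(x)}\leq\norm{\phi}\,d_X(x,x_0)$; in particular $T_\phi(x_0)=0$. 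From $\langle T_\phi(x_1)-T_\phi(x_2),v\rangle=\phi(v\mathbf{m}_{x_1 x_2})$ and $cs_{p',\,r}(v\mathbf{m}_{x_1 x_2})\leq\norm{v}\,d_X(x_1,x_2)$ one obtains $Lip(T_\phi)\leq\norm{\phi}$, and the same identity gives $\langle T_\phi,\mathbf{m}\rangle=\phi(\mathbf{m})$, first on atoms and then, by (\ref{ten}), on every molecule; hence $T\mapsto\Phi_T$ and $\phi\mapsto T_\phi$ are mutually inverse.

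The step I expect to be the main obstacle is the bound $\pi_{\left(p,\mathfrak{m}^L\left(r;r\right)\right)}^{L}(T_\phi)\leq\norm{\phi}$. Fix $\sigma_1,\dots,\sigma_m\in\mathbb{R}\setminus\{0\}$ and $x'_j,x''_j\in X$ (indices with $\sigma_j=0$ are harmlessly dropped), let $\varepsilon>0$, and for each $j$ choose $v_j\in B_E$ with $\langle T_\phi x'_j-T_\phi x''_j,v_j\rangle\geq(1-\varepsilon)\norm{T_\phi x'_j-T_\phi x''_j}$. By the duality between $\ell_p$ and $\ell_{p'}$ it suffices to bound $\sum_j b_j|\sigma_j|\,\norm{T_\phi x'_j-T_\phi x''_j}$ over nonnegative $b=(b_j)$ with $\norm{b\Big|\ell_{p'}}\leq1$. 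For such $b$, put $\mathbf{m}=\sum_{j=1}^{m}c_j\mathbf{m}_{x'_j x''_j}$ with $c_j=b_j|\sigma_j|v_j$, so that $(1-\varepsilon)\sum_j b_j|\sigma_j|\,\norm{T_\phi x'_j-T_\phi x''_j}\leq\langle T_\phi,\mathbf{m}\rangle\leq\norm{\phi}\,cs_{p',\,r}(\mathbf{m})$. Estimating $cs_{p',\,r}(\mathbf{m})$ from this atomic representation with the scaling sequence $\theta_j=\sigma_j^{-1}$ gives $|\theta_j|\,\norm{c_j}=b_j\norm{v_j}\leq b_j$, hence $\norm{\theta\cdot\norm{c}\,\Big|\ell_{p'}}\leq\norm{b\Big|\ell_{p'}}\leq1$, while $\norm{(\theta^{-1},x',x'')\Big|\ell_r^{L,w}}=\norm{(\sigma,x',x'')\Big|\ell_r^{L,w}}$; therefore $cs_{p',\,r}(\mathbf{m})\leq\norm{(\sigma,x',x'')\Big|\ell_r^{L,w}}=\mathfrak{m}_{(r;r)}^{L}(\sigma,x',x'')$. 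Taking the supremum over $b$ and letting $\varepsilon\to0$ yields $\norm{(\sigma,T_\phi x',T_\phi x'')\Big|\ell_p}\leq\norm{\phi}\,\mathfrak{m}_{(r;r)}^{L}(\sigma,x',x'')$, i.e. $\pi_{\left(p,\mathfrak{m}^L\left(r;r\right)\right)}^{L}(T_\phi)\leq\norm{\phi}$. Together with the estimate of the second paragraph this makes the canonical pairing an isometric isomorphism, as claimed.
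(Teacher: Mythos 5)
Your argument is correct, but it is not the route the paper takes: the paper offers no proof at all, simply observing that for $s=\infty$ one has $\mu_{p',\,r,\,\infty}(\cdot)=cs_{p',\,r}(\cdot)$ and $\Pi_{p,\,r,\,\infty}^{L}(X,E^{\ast})=\Pi_{\left(p,\mathfrak{m}^L\left(r;r\right)\right)}^{L}(X,E^{\ast})$, so that the statement is the special case $s=\infty$ of \cite[Theorem 5.2]{CD11}. What you have done instead is to write out the underlying duality argument directly: the contractive estimate $\left|\left\langle T,\textbf{m}\right\rangle\right|\leq\pi_{\left(p,\mathfrak{m}^L\left(r;r\right)\right)}^{L}(T)\cdot cs_{p',\,r}(\textbf{m})$ via H\"older along $(p,p')$ and the identity $\mathfrak{m}_{(r;r)}^{L}=\left\|\cdot\Big|\ell_r^{L,w}\right\|$; the explicit inverse $\phi\mapsto T_{\phi}$, $\left\langle T_{\phi}(x),v\right\rangle=\phi\left(v\textbf{m}_{x x_{0}}\right)$; and the converse bound $\pi_{\left(p,\mathfrak{m}^L\left(r;r\right)\right)}^{L}(T_{\phi})\leq\left\|\phi\right\|$ obtained by dualizing $\ell_p$ against nonnegative $b\in B_{\ell_{p'}}$, norming functionals $v_j\in B_E$, and testing $cs_{p',\,r}$ on the atomic representation with scaling $\theta_j=\sigma_j^{-1}$ — which is exactly the mechanism behind the cited theorem of Ch\'avez-Dom\'{\i}nguez, here specialized to $s=\infty$. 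Your version buys a self-contained proof inside the paper's own notation (and its estimates reappear almost verbatim in the chains (\ref{twelve}) and (\ref{forty}) later on), at the cost of redoing what the reference already covers in greater generality. Two small points worth making explicit if this were to be included: the identification is injective only on base-point preserving maps (the pairing kills constants), so the class $\Pi_{\left(p,\mathfrak{m}^L\left(r;r\right)\right)}^{L}(X,E^{\ast})$ must be understood, as in \cite{CD11}, to consist of maps sending $x_{0}$ to $0$ — you flag this but the paper's statement is silent about it; and the scaling sequences $\sigma$ in $cs_{p',\,r}$ have nonzero entries by the paper's standing convention, which is what legitimizes your choice $\theta_j=\sigma_j^{-1}$ and your discarding of indices with $\sigma_j=0$.
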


The following characterization of Lipschitz $\left(\mathfrak{m}^L\left(s;p\right),r\right)-$summing maps between metric spaces is in terms of ideal norms of associated bounded linear operators between Chevet$-$Saphar spaces.

\begin{thm}
Let $S$ from $X$ into $Y$ be a Lipschitz map. The following are equivalent
\begin{enumerate}
	\item $S$ is a Lipschitz $\left(\mathfrak{m}^L\left(s;p\right),r\right)-$summing map.
	\item For every Banach space $G$ (or only $G=\ell_{s'}$), the operator $$S_G:\mathcal{CS}_{p',\,r}(X,G)\longrightarrow\mathcal CS_{s'}(Y,G)$$ is bounded. In this case $$\pi_{\left(\mathfrak{m}^L\left(s;p\right),r\right)}^{L}(S)=\left\|S_{\ell_{s'}}:\mathcal{CS}_{p',\,r}(X,\ell_{s'})\longrightarrow\mathcal CS_{s'}(Y,\ell_{s'})\right\|\geq\left\|S_G:\mathcal{CS}_{p',\,r}(X,G)\longrightarrow\mathcal CS_{s'}(Y,G)\right\|.$$
\end{enumerate}
\end{thm}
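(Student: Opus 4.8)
The plan is to read the statement as a boundedness assertion about the induced linear maps $S_G$ on molecules and to deduce it from the two Chevet--Saphar dualities already at our disposal, together with the composition Theorem \ref{manaf1} and the characterization Theorem \ref{a}. One works under the standing assumptions $1\leq r\leq p\leq s\leq\infty$, so that the class is nontrivial and $\frac{1}{p'}+\frac{1}{r}\geq 1$ (hence $\mathcal{CS}_{p',\,r}$ is a $\beta$-normed space); the relevant facts are the isometric identifications $\mathcal{CS}_{p',\,r}(X,G)^{\ast}\cong\Pi_{(p,\mathfrak{m}^L(r;r))}^{L}(X,G^{\ast})$ and $\mathcal CS_{s'}(Y,G)^{\ast}\cong\Pi_{(s,\mathfrak{m}^L(s;s))}^{L}(Y,G^{\ast})$, both realized by the pairing (\ref{ten}). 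The first thing I would record is that, under these identifications, the adjoint of $S_G$ is the composition operator $T\mapsto T\circ S$: for $\textbf{m}=\sum_{j} v_j\textbf{m}_{x'_j x''_j}\in\mathcal{M}(X,G)$ and $T\in\mathbb{L}_{y_{0}}(Y,G^{\ast})$, formula (\ref{ten}) gives $\left\langle T,S_G\textbf{m}\right\rangle=\sum_j\left\langle T(Sx'_j)-T(Sx''_j),v_j\right\rangle=\left\langle T\circ S,\textbf{m}\right\rangle$. Thus the theorem is precisely the assertion that this composition operator is bounded, with the norm identities dictated by the two dualities.

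For $(1)\Rightarrow(2)$ I would argue directly. Assume $S\in\Pi_{(\mathfrak{m}^L(s;p),r)}^{L}(X,Y)$, fix a Banach space $G$, and take $\textbf{m}\in\mathcal{CS}_{p',\,r}(X,G)$. Since $\mathcal CS_{s'}(Y,G)$ is a normed space, $cs_{s'}(S_G\textbf{m})=\sup\{\,|\left\langle T,S_G\textbf{m}\right\rangle|:T\in\Pi_{(s,\mathfrak{m}^L(s;s))}^{L}(Y,G^{\ast}),\ \pi_{(s,\mathfrak{m}^L(s;s))}^{L}(T)\leq 1\,\}$. For each such $T$, Theorem \ref{manaf1} yields $T\circ S\in\Pi_{(p,\mathfrak{m}^L(r;r))}^{L}(X,G^{\ast})=\mathcal{CS}_{p',\,r}(X,G)^{\ast}$ with $\pi_{(p,\mathfrak{m}^L(r;r))}^{L}(T\circ S)\leq\pi_{(\mathfrak{m}^L(s;p),r)}^{L}(S)$, so that $|\left\langle T,S_G\textbf{m}\right\rangle|=|\left\langle T\circ S,\textbf{m}\right\rangle|\leq\pi_{(\mathfrak{m}^L(s;p),r)}^{L}(S)\cdot cs_{p',\,r}(\textbf{m})$. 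Taking the supremum over $T$ gives $cs_{s'}(S_G\textbf{m})\leq\pi_{(\mathfrak{m}^L(s;p),r)}^{L}(S)\cdot cs_{p',\,r}(\textbf{m})$, so $S_G$ is bounded with $\left\|S_G\right\|\leq\pi_{(\mathfrak{m}^L(s;p),r)}^{L}(S)$; in particular this holds for $G=\ell_{s'}$.

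For $(2)\Rightarrow(1)$ it is enough to use $G=\ell_{s'}$; set $M=\left\|S_{\ell_{s'}}\right\|$. Given $g_1,\dots,g_n\in Y^{\#}$, define $T_n\colon Y\to(\ell_{s'})^{\ast}\cong\ell_s$ by $T_n(y)=\big(\left\langle g_1,y\right\rangle_{(Y^{\#},Y)},\dots,\left\langle g_n,y\right\rangle_{(Y^{\#},Y)},0,0,\dots\big)$. Exactly as in the proofs of Theorem \ref{c} and Corollary \ref{e}, $T_n\in\Pi_{(p,\mathfrak{m}^L(s;p))}^{L}(Y,\ell_s)$, and hence by Theorem \ref{d} $T_n\in\Pi_{(s,\mathfrak{m}^L(s;s))}^{L}(Y,\ell_s)$ with $\pi_{(s,\mathfrak{m}^L(s;s))}^{L}(T_n)\leq\left\|(g_k)_{k=1}^{n}\Big|\ell_s(Y^{\#})\right\|$. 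Reading $T_n$ as an element of $\mathcal CS_{s'}(Y,\ell_{s'})^{\ast}$ and applying the adjoint of the bounded operator $S_{\ell_{s'}}$, I obtain $T_n\circ S\in\mathcal{CS}_{p',\,r}(X,\ell_{s'})^{\ast}=\Pi_{(p,\mathfrak{m}^L(r;r))}^{L}(X,\ell_s)$ with $\pi_{(p,\mathfrak{m}^L(r;r))}^{L}(T_n\circ S)\leq M\cdot\left\|(g_k)_{k=1}^{n}\Big|\ell_s(Y^{\#})\right\|$. Feeding this into the identity (\ref{moon}) from the proof of Theorem \ref{c} produces
$$\Bigg[\sum_{j=1}^{m}\left|\sigma_j\right|^{p}\bigg[\sum_{k=1}^{n}\left|\left\langle g_k,Sx'_j\right\rangle_{(Y^{\#},Y)}-\left\langle g_k,Sx''_j\right\rangle_{(Y^{\#},Y)}\right|^{s}\bigg]^{\frac{p}{s}}\Bigg]^{\frac{1}{p}}\leq M\cdot\left\|(g_k)_{k=1}^{n}\Big|\ell_s(Y^{\#})\right\|\cdot\left\|(\sigma,x',x'')\Big|\ell_r^{L,w}\right\|,$$
and then Theorem \ref{a} (with the role of its ``$q$'' played by $p$ and that of its ``$p$'' played by $r$) gives $S\in\Pi_{(\mathfrak{m}^L(s;p),r)}^{L}(X,Y)$ with $\pi_{(\mathfrak{m}^L(s;p),r)}^{L}(S)\leq M$. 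Combining the two implications we get $\pi_{(\mathfrak{m}^L(s;p),r)}^{L}(S)=\left\|S_{\ell_{s'}}\right\|$, and the estimate from $(1)\Rightarrow(2)$ then gives $\left\|S_G\right\|\leq\pi_{(\mathfrak{m}^L(s;p),r)}^{L}(S)=\left\|S_{\ell_{s'}}\right\|$ for every Banach space $G$, which is the whole displayed chain; the parenthetical reduction ``only $G=\ell_{s'}$'' is subsumed in this loop.

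The step I expect to be the main obstacle is the bookkeeping at the dual level: checking cleanly that $S_G^{\ast}$ really is the composition map $T\mapsto T\circ S$ under the two Chevet--Saphar identifications (so that Theorem \ref{manaf1} applies verbatim), and, in the converse direction, verifying that probing the single operator $S_{\ell_{s'}}$ against the finite-rank coordinate maps $T_n$ built from arbitrary $(g_k)\subset Y^{\#}$ already reconstitutes the full family of inequalities of Theorem \ref{a}. The endpoint cases $s\in\{1,\infty\}$, where $(\ell_{s'})^{\ast}\cong\ell_s$ has to be read with the usual convention (one only needs that each $T_n$ has finite-dimensional range), require merely a one-line remark.
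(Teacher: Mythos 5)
Your proposal is correct and follows essentially the same route as the paper: the forward direction via the pairing identity $\left\langle T,S_G\textbf{m}\right\rangle=\left\langle T\circ S,\textbf{m}\right\rangle$, the duality $\mathcal CS_{s'}(Y,G)^{\ast}\cong\Pi_{\left(s,\mathfrak{m}^L\left(s;s\right)\right)}^{L}(Y,G^{\ast})$ and Theorem \ref{manaf1}, and the converse via the finite-rank maps $T_n$ built from $(g_k)\subset Y^{\#}$, Theorem \ref{d}, the duality with $\Pi_{\left(p,\mathfrak{m}^L\left(r;r\right)\right)}^{L}$, the identity (\ref{moon}) and Theorem \ref{a}. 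The only difference is presentational: you invoke the stated Chevet--Saphar duality isometries as black boxes where the paper re-derives the corresponding estimates by hand with H\"older's inequality and an infimum over representations, which is an equivalent bookkeeping of the same argument.
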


\begin{proof}
First, suppose that $S$ is a Lipschitz $\left(\mathfrak{m}^L\left(s;p\right),r\right)-$summing map. Let $\varphi\in\mathcal CS_{s'}(Y,G)^{\ast}$ with $\left\|\varphi\right\|\leq 1$. Since $\mathcal CS_{s'}(Y,G)^{\ast}\equiv\Pi_{\left(s,\mathfrak{m}^L\left(s;s\right)\right)}^{L}(Y,G^{\ast})$, we can identify $\varphi$ with a map $L_{\varphi}\in\Pi_{\left(s,\mathfrak{m}^L\left(s;s\right)\right)}^{L}(Y,G^{\ast})$ with $\pi_{\left(s,\mathfrak{m}^L\left(s;s\right)\right)}^{L}(L_{\varphi})=\left\|\varphi\right\|\leq 1$. 

Let $\textbf{m}$ be a $G-$valued molecule on $X$, say $\textbf{m}=\sum\limits_{j=1}^{m} v_{j} \textbf{m}_{x'_j x''_j}$ with $x'_j, x''_j\in X$ and $v_j\in G$. Then $$S_{G}\left(\textbf{m}\right)=\sum\limits_{j=1}^{m} v_{j}\textbf{m}_{Sx'_j Sx''_j}.$$

The pairing formula defined in (\ref{ten}), the H\"older inequality and Theorem \ref{manaf1} naturally come together to give us
\begin{align}
\left\langle\varphi,S_{G}\left(\textbf{m}\right)\right\rangle&=\sum\limits_{j=1}^{m}\left\langle L_{\varphi}(Sx'_j)-L_{\varphi}(Sx''_j),v_j\right\rangle \nonumber \\
&=\sum\limits_{j=1}^{m}\left\langle\left(L_{\varphi}\circ S\right) x'_j-\left(L_{\varphi}\circ S\right) x''_j,v_j\right\rangle \nonumber \\
&=\left\langle L_{\varphi}\circ S,\textbf{m}\right\rangle. \nonumber 
\end{align}
Hence
\begin{align}
\left|\left\langle\varphi,S_{G}(\textbf{m})\right\rangle\right|&=\left|\left\langle L_{\varphi}\circ S,\textbf{m}\right\rangle\right| \nonumber \\
&\leq\sum\limits_{j=1}^{m}\left|\left\langle\left(L_{\varphi}\circ S\right) x'_j-\left(L_{\varphi}\circ S\right) x''_j,v_j\right\rangle\right| \nonumber \\
&\leq\sum\limits_{j=1}^{m}\left\|\left(L_{\varphi}\circ S\right) x'_j-\left(L_{\varphi}\circ S\right) x''_j\right\|\left\|v_j\right\| \nonumber \\
&\leq\left[\sum\limits_{j=1}^{m}\left|\frac{1}{\sigma_j}\right|^{p}\left\|\left(L_{\varphi}\circ S\right) x'_j-\left(L_{\varphi}\circ S\right) x''_j\right\|^{p}\right]^{\frac{1}{p}}\cdot\left\|\sigma\cdot\left\|v\right\|\Big|\ell_{p'}\right\| \nonumber \\
&\leq\pi_{\left(p,\mathfrak{m}^L\left(r;r\right)\right)}^{L}(L_{\varphi}\circ S)\cdot\left\|\big(\frac{1}{\sigma},x',x''\big)\Big|\ell_r^{L,w}\right\|\cdot\left\|\sigma\cdot\left\|v\right\|\Big|\ell_{p'}\right\| \nonumber \\
&\leq\pi_{\left(\mathfrak{m}^L\left(s;p\right),r\right)}^{L}(S)\cdot\left\|\big(\frac{1}{\sigma},x',x''\big)\Big|\ell_r^{L,w}\right\|\cdot\left\|\sigma\cdot\left\|v\right\|\Big|\ell_{p'}\right\|. \label{twelve} 
\end{align}
Taking the infimum over all representations of $\textbf{m}$ and $\sigma\subset\mathbb{R}$ on the right side of (\ref{twelve}), we have
\begin{equation}\label{thirty}
\left|\left\langle\varphi,S_{G}\left(\textbf{m}\right)\right\rangle\right|\leq\pi_{\left(\mathfrak{m}^L\left(s;p\right),r\right)}^{L}(S)\cdot cs_{p',\,r}(\textbf{m})
\end{equation} 
Taking the supremum over all such $\varphi$ on the left side of (\ref{thirty}), we have 
$$\sup\limits_{\varphi\in B_{\mathcal CS_{s'}(Y,G)^{\ast}}}\left|\left\langle\varphi,S_{G}\left(\textbf{m}\right)\right\rangle\right|\leq\pi_{\left(\mathfrak{m}^L\left(s;p\right),r\right)}^{L}(S)\cdot cs_{p',\,r}(\textbf{m}).$$
Then $$cs_{s'}(S_{G}\left(\textbf{m}\right))\leq\pi_{\left(\mathfrak{m}^L\left(s;p\right),r\right)}^{L}(S)\cdot cs_{p',\,r}(\textbf{m})$$ and $$\left\|S_{G}\right\|\leq\pi_{\left(\mathfrak{m}^L\left(s;p\right),r\right)}^{L}(S).$$

Conversely, suppose that $S_{\ell_{s'}}:\mathcal{CS}_{p',\,r}(X,\ell_{s'})\longrightarrow\mathcal CS_{s'}(Y,\ell_{s'})$ is a bounded linear operator.
Let $(g_k)_{k=1}^{n}\subset Y^{\#}$ and $n\in\mathbb{N}$. We define $T\in\Pi_{\left(p,\mathfrak{m}^L\left(s;p\right)\right)}^{L}(Y,\ell_s)$ by the rule $$T(y)=\Big(\left\langle g_{1},y\right\rangle_{(Y^{\#},Y)},\cdots,\left\langle g_{n},y\right\rangle_{(Y^{\#},Y)},0,0,0,\cdots\Big)$$
with $$\pi_{\left(p,\mathfrak{m}^L\left(s;p\right)\right)}^{L}(T)\leq\left\|(g_k)_{k=1}^{n}\Big|\ell_s(Y^{\#})\right\|.$$
It follows from Theorem \ref{d} that $$\pi_{\left(s,\mathfrak{m}^L\left(s;s\right)\right)}^{L}(T)\leq\left\|(g_k)_{k=1}^{n}\Big|\ell_s(Y^{\#})\right\|.$$   Assume $\textbf{m}$ is an $\ell_{s'}-$valued molecule on $X$, say $\textbf{m}=\sum\limits_{j=1}^{m} v_{j} \textbf{m}_{x'_j x''_j}$ with $x'_j, x''_j\in X$ and $v_j\in\ell_{s'}$. 

It suffices to show that $T\circ S\in\Pi_{\left(p,\mathfrak{m}^L\left(r;r\right)\right)}^{L}(X,\ell_s)$.
\begin{align}
\left\langle T\circ S,\textbf{m}\right\rangle&=\sum\limits_{j=1}^{m}\left\langle (T\circ S)x'_j-(T\circ S)x''_j,v_j\right\rangle\nonumber \\
&=\sum\limits_{j=1}^{m}\left\langle T(Sx'_j)-T(Sx''_j),v_j\right\rangle\nonumber \\
&=\left\langle T,\sum\limits_{j=1}^{m} v_{j} \textbf{m}_{Sx'_j Sx''_j}\right\rangle\nonumber \\
&=\left\langle T,S_{\ell_{s'}}(\textbf{m})\right\rangle. \nonumber  
\end{align}
The H\"older inequality and the definition of Lipschitz $\left(s,\mathfrak{m}^L\left(s;s\right)\right)-$summing maps naturally come together to give us
\begin{align} 
\left|\left\langle T\circ S,\textbf{m}\right\rangle\right|&=\left|\left\langle T,S_{\ell_{s'}}(\textbf{m})\right\rangle\right| \nonumber \\
&\leq\sum\limits_{j=1}^{m}\left|\left\langle T(Sx'_j)-T(Sx''_j),v_j\right\rangle\right| \nonumber \\
&\leq\sum\limits_{j=1}^{m}\left\|T(Sx'_j)-T(Sx''_j)\Big|\ell_{s}\right\|\left\|v_j\Big|\ell_{s'}\right\| \nonumber \\
&\leq\left[\sum\limits_{j=1}^{m}\left|\frac{1}{\sigma_j}\right|^{s}\left\|T(Sx'_j)-T(Sx''_j)\Big|\ell_s\right\|^{s}\right]^{\frac{1}{s}}\cdot\left\|\sigma\cdot\left\|v\right\|\Big|\ell_{s'}\right\| \nonumber \\
&\leq\left\|(g_k)_{k=1}^{n}\Big|\ell_s(Y^{\#})\right\|\cdot\left\|\big(\frac{1}{\sigma},Sx',Sx''\big)\Big|\ell_s^{L,w}\right\|\cdot\left\|\sigma\cdot\left\|v\right\|\Big|\ell_{s'}\right\|. \label{forty}
\end{align}
Taking the infimum over all representations of $\textbf{m}$ and $\sigma\subset\mathbb{R}$ on the right side of (\ref{forty}) and using the boundedness of $S_{\ell_{s'}}$, we have
\begin{align}
\left|\left\langle T\circ S,\textbf{m}\right\rangle\right|&\leq\left\|(g_k)_{k=1}^{n}\Big|\ell_s(Y^{\#})\right\|\cdot cs_{s'}\left(S_{\ell_{s'}}(\textbf{m})\right) \nonumber \\
&\leq\left\|(g_k)_{k=1}^{n}\Big|\ell_s(Y^{\#})\right\|\cdot\left\|S_{\ell_{s'}}\right\|\cdot cs_{p',\,r}(\textbf{m}). \label{fifty}
\end{align}
Therefore, from the duality between $cs_{p',\,r}(\cdot)$ and  $\pi_{\left(p,\mathfrak{m}^L\left(r;r\right)\right)}^{L}(\cdot)$, after taking the supremum over all molecules $\textbf{m}$ with $cs_{p',\,r}(\textbf{m})\leq 1$ on both sides of (\ref{fifty}), we obtain $$\pi_{\left(p,\mathfrak{m}^L\left(r;r\right)\right)}^{L}(T\circ S)\leq\left\|(g_k)_{k=1}^{n}\Big|\ell_s(Y^{\#})\right\|\cdot\left\|S_{\ell_{s'}}\right\|.$$
Then from (\ref{moon}) we have
\begin{align}
\Bigg[\sum\limits_{j=1}^{m}\left|\sigma_j\right|^{p}\bigg[\sum\limits_{k=1}^{n}&\left|\left\langle g_k,Sx'_j\right\rangle_{(Y^{\#},Y)}-\left\langle g_k,Sx''_j\right\rangle_{(Y^{\#},Y)}\right|^{s}\bigg]^\frac{p}{s}\Bigg]^\frac{1}{p}\nonumber \\
&\leq\left\|S_{\ell_{s'}}\right\|\cdot\left\|(g_k)_{k=1}^{n}\Big|\ell_s(Y^{\#})\right\|\cdot\left\|(\sigma,x',x'')\Big|\ell_r^{L,w}\right\|. \nonumber
\end{align}
By Theorem \ref{a} we get  $S$ is a Lipschitz $\left(\mathfrak{m}^L\left(s;p\right),r\right)-$summing map with $\pi_{\left(\mathfrak{m}^L\left(s;p\right),r\right)}^{L}(S)\leq\left\|S_{\ell_{s'}}\right\|$.\\ 
\end{proof}

\section{APPLICATIONS}
\subsection{An 'interpolation style' theorem}
As it so often happens with many constants associated to mappings, it is not easy to calculate the Lipschitz $\left(\mathfrak{m}^L\left(s;p\right),r\right)-$summing constant of a specific map. The following 'interpolation style' theorem is
based on \cite[Lemma 5]{Puhl77} and gives useful bounds that are sufficient in some cases.

\begin{mytheorem}\label{www}
If $1\leq s\leq \infty$, $0< p\leq s\leq \infty$, $p\geq r$, then every Lipschitz $\left(p,\mathfrak{m}^L\left(r;r\right)\right)-$\\summing operator $S$ from $X$ into $Y$ is Lipschitz $\left(\mathfrak{m}^L\left(s;p\right),r\right)-$summing and satisfies $$\pi_{\left(\mathfrak{m}^L\left(s;p\right),r\right)}^{L}(S)\leq \pi_{\left(p,\mathfrak{m}^L\left(r;r\right)\right)}^{L}(S)^{\frac{p}{s'\left(p\right)}}\cdot Lip(S)^{\frac{p}{s}}.$$
\end{mytheorem}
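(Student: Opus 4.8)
The plan is to deduce the statement from the characterization of Lipschitz mixed $(s;p)$-summable sequences in Proposition \ref{two} (applied in $Y$, with the r\^ole of $q$ there played by $p$), reducing everything to an interpolation inequality in the spirit of \cite[Lemma 5]{Puhl77}. Its two endpoints are, respectively, the hypothesis that $S$ is Lipschitz $\left(p,\mathfrak{m}^L\left(r;r\right)\right)$-summing (the ``$s=\infty$'' end) and the plain Lipschitz continuity of $S$ (the ``$s=p$'' end). Throughout, write $\Pi=\pi_{\left(p,\mathfrak{m}^L\left(r;r\right)\right)}^{L}(S)$, $L=Lip(S)$, and for finite sequences $\sigma_1,\dots,\sigma_m\in\mathbb{R}$, $x'_1,\dots,x'_m,x''_1,\dots,x''_m\in X$ put $\mathcal{W}=\norm{(\sigma,x',x'')\big|\ell_p^{L,w}}$'s bigger companion $\norm{(\sigma,x',x'')\big|\ell_r^{L,w}}$.

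First I would record the two endpoint estimates. Since $\mathfrak{m}_{(r;r)}^{L}=\norm{\,\cdot\,\big|\ell_r^{L,w}}$ by Concluding Remarks \ref{66}(2), Definition \ref{eight} applied to $S\in\Pi_{\left(p,\mathfrak{m}^L\left(r;r\right)\right)}^{L}$ gives the \emph{strong} bound $\norm{(\sigma,Sx',Sx'')\big|\ell_p}\le\Pi\cdot\mathcal{W}$. On the other hand, for each $g\in B_{Y^{\#}}$ the map $g\circ S$ lies in $X^{\#}$ with $Lip(g\circ S)\le Lip(g)\,Lip(S)\le L$, so $\tfrac1L\,g\circ S\in B_{X^{\#}}$ (the case $L=0$, i.e. $S$ constant, being trivial); combined with $\norm{\,\cdot\,\big|\ell_p^{L,w}}\le\norm{\,\cdot\,\big|\ell_r^{L,w}}$ (valid since $r\le p$) this gives the \emph{weak} bound $\norm{(\sigma,Sx',Sx'')\big|\ell_p^{L,w}}\le L\cdot\mathcal{W}$. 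If $s=\infty$ then $\tfrac{p}{s'(p)}=1$, $\tfrac{p}{s}=0$ and, since $\mathfrak{m}_{(\infty;p)}^{L}=\norm{\,\cdot\,\big|\ell_p}$, the claim is exactly the strong bound; if $s=p$ then $\tfrac{p}{s'(p)}=0$, $\tfrac{p}{s}=1$ and, since $\mathfrak{m}_{(p;p)}^{L}=\norm{\,\cdot\,\big|\ell_p^{L,w}}$, the claim is exactly the weak bound. So I may assume $p<s<\infty$.

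The interpolation step: fix $\mu\in W(B_{Y^{\#}})$ and set $\phi_j(g)=|\sigma_j|\,\big|g(Sx'_j)-g(Sx''_j)\big|$ for $g\in B_{Y^{\#}}$, and $\psi_j=|\sigma_j|\,d_Y(Sx'_j,Sx''_j)$, so that $0\le\phi_j\le\psi_j$ on $B_{Y^{\#}}$ (as $Lip(g)\le1$), with $\big(\sum_j\psi_j^{p}\big)^{1/p}=\norm{(\sigma,Sx',Sx'')\big|\ell_p}$ and $\sup_{g}\big(\sum_j\phi_j(g)^{p}\big)^{1/p}=\norm{(\sigma,Sx',Sx'')\big|\ell_p^{L,w}}$. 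Splitting $\phi_j^{s}=\phi_j^{s-p}\phi_j^{p}\le\psi_j^{s-p}\phi_j^{p}$, integrating in $g$, and applying H\"older's inequality with the conjugate exponents $\tfrac{s}{s-p}$ and $\tfrac{s}{p}$ gives
$$\sum_{j=1}^{m}\left(\int_{B_{Y^{\#}}}\phi_j^{s}\,d\mu\right)^{\frac{p}{s}}\le\left(\sum_{j=1}^{m}\psi_j^{p}\right)^{\frac{s-p}{s}}\left(\int_{B_{Y^{\#}}}\sum_{j=1}^{m}\phi_j^{p}\,d\mu\right)^{\frac{p}{s}}\le\left(\sum_{j=1}^{m}\psi_j^{p}\right)^{\frac{s-p}{s}}\left(\sup_{g\in B_{Y^{\#}}}\sum_{j=1}^{m}\phi_j(g)^{p}\right)^{\frac{p}{s}},$$
where the last inequality uses that $\mu$ is a probability measure. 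Raising to the power $\tfrac1p$ and using $\tfrac{s-p}{sp}=\tfrac1p-\tfrac1s=\tfrac{1}{s'(p)}$ together with $\tfrac{p}{s'(p)}+\tfrac{p}{s}=1$ and the two endpoint bounds, this becomes
$$\left[\sum_{j=1}^{m}\left(\int_{B_{Y^{\#}}}|\sigma_j|^{s}\big|g(Sx'_j)-g(Sx''_j)\big|^{s}\,d\mu(g)\right)^{\frac{p}{s}}\right]^{\frac1p}\le\norm{(\sigma,Sx',Sx'')\big|\ell_p}^{\frac{p}{s'(p)}}\norm{(\sigma,Sx',Sx'')\big|\ell_p^{L,w}}^{\frac{p}{s}}\le\Pi^{\frac{p}{s'(p)}}L^{\frac{p}{s}}\,\mathcal{W}.$$
Since $\mu\in W(B_{Y^{\#}})$ is arbitrary and the sequence is finite, Proposition \ref{two} (with $q=p$) yields $\mathfrak{m}_{(s;p)}^{L}(\sigma,Sx',Sx'')\le\Pi^{p/s'(p)}L^{p/s}\,\mathcal{W}$, so by Definition \ref{seven} $S\in\Pi_{\left(\mathfrak{m}^L\left(s;p\right),r\right)}^{L}(X,Y)$ with $\pi_{\left(\mathfrak{m}^L\left(s;p\right),r\right)}^{L}(S)\le\pi_{\left(p,\mathfrak{m}^L\left(r;r\right)\right)}^{L}(S)^{\frac{p}{s'(p)}}Lip(S)^{\frac{p}{s}}$.

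The computation just sketched is precisely \cite[Lemma 5]{Puhl77} transcribed into the present Lipschitz/triple-sequence language, so I expect no genuine difficulty there; the points that need care are the exponent bookkeeping (the H\"older pair $\tfrac{s}{s-p},\tfrac{s}{p}$ and the identity $\tfrac1p-\tfrac1s=\tfrac1{s'(p)}$) and, mainly, the separate — but immediate — handling of the degenerate cases $s=\infty$, $s=p$ and $Lip(S)=0$, which lie outside the hypotheses of Proposition \ref{two} and must be argued directly from Concluding Remarks \ref{66} and the two endpoint bounds.
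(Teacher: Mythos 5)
Your proof is correct and takes essentially the same route as the paper's: the same Puhl-style splitting $\phi_j^{s}\le\psi_j^{\,s-p}\phi_j^{p}$ followed by H\"older with exponents $\tfrac{s}{s-p}$ and $\tfrac{s}{p}$, the same two endpoint bounds (the strong $\ell_p$ estimate from the Lipschitz $\left(p,\mathfrak{m}^L\left(r;r\right)\right)$-summing hypothesis and the weak $\ell_p^{L,w}$ estimate via $Lip(S)$, with exponents $\tfrac{p}{s'\left(p\right)}$ and $\tfrac{p}{s}$), and the same conclusion through the probability-measure characterization. The only differences are cosmetic: you treat the degenerate cases $s=p$, $s=\infty$, $Lip(S)=0$ explicitly and omit the paper's preliminary (and in fact redundant) appeal to Corollary \ref{e}.
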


\begin{proof}
From Corollary \ref{e} and the ideal property of Lipschitz $\left(p,\mathfrak{m}^L\left(r;r\right)\right)-$summing operators we conclude that $S$ is a Lipschitz $\left(\mathfrak{m}^L\left(s;p\right),r\right)-$summing operator. 

We recall that for $0<p\leq s\leq\infty$, let the index $s'\left(p\right)$ is determined by the equation $$\frac{1}{s'\left(p\right)}+\frac{1}{s}=\frac{1}{p}.$$ Now , let $\sigma_1,\cdots,\sigma_m\in\mathbb{R}$; $x'_1,\cdots, x'_m$, $x''_1,\cdots,x''_m\in X$ and $m\in\mathbb{N}$. For any probability measure $\mu$ on $B_{Y^{\#}}$, from the point wise inequality $$\left|\left\langle g,y'\right\rangle_{(Y^{\#},Y)}-\left\langle g,y''\right\rangle_{(Y^{\#},Y)}\right|\leq Lip(g)\cdot d_Y(y',y'')$$ for every $y'$, $y''\in Y$ and $g\in Y^{\#}$, we obtain
\begin{align}
\Bigg[\sum\limits_{j=1}^{m}\bigg[\int\limits_{B_{Y^{\#}}}\left|\sigma_j\right|^{s}&\left|\left\langle g,Tx'_j\right\rangle_{(Y^{\#},Y)}-\left\langle g,Tx''_j\right\rangle_{(Y^{\#},Y)}\right|^{s}d\mu(g)\bigg]^\frac{p}{s}\Bigg]^{\frac{1}{p}} \nonumber \\
&\leq\Bigg[\sum\limits_{j=1}^{m}\bigg(\int\limits_{B_{Y^{\#}}}\left|\sigma_j\right|^{p}\left|\left\langle g,Tx'_j\right\rangle_{(Y^{\#},Y)}-\left\langle g,Tx''_j\right\rangle_{(Y^{\#},Y)}\right|^{p}d\mu(g)\bigg)^\frac{p}{s}\nonumber \\
&\cdot\bigg(\left|\sigma_j\right|^{\frac{(s-p)\cdot p}{s}}\cdot d_Y(Sx'_j,Sx''_j)^{^{\frac{(s-p)\cdot p}{s}}}\bigg)\Bigg]^{\frac{1}{p}}.\nonumber 
\end{align}
Noting that $p=\frac{(s-p)\cdot s'\left(p\right)}{s}$ and using the H\"older inequality, we have

\begin{align}\label{hous1}
\Bigg[\sum\limits_{j=1}^{m}\bigg[\int\limits_{B_{Y^{\#}}}&\left|\sigma_j\right|^{s}\left|\left\langle g,Tx'_j\right\rangle_{(Y^{\#},Y)}-\left\langle g,Tx''_j\right\rangle_{(Y^{\#},Y)}\right|^{s}d\mu(g)\bigg]^\frac{p}{s}\Bigg]^{\frac{1}{p}} \nonumber \\
&\leq\Bigg[\sum\limits_{j=1}^{m}\int\limits_{B_{Y^{\#}}}\left|\sigma_j\right|^{p}\left|\left\langle g,Tx'_j\right\rangle_{(Y^{\#},Y)}-\left\langle g,Tx''_j\right\rangle_{(Y^{\#},Y)}\right|^{p}d\mu(g)\Bigg]^\frac{1}{s} \nonumber \\
&\cdot\Bigg[\sum\limits_{j=1}^{m}\left|\sigma_j\right|^{p}\cdot d_Y(Sx'_j,Sx''_j)^{p}\Bigg]^{\frac{1}{s'\left(p\right)}}. 
\end{align}
On the one hand, the fact that $S$ is a Lipschitz $\left(p,\mathfrak{m}^L\left(r;r\right)\right)-$summing means that 
\begin{equation}\label{hous2}
\Bigg[\sum\limits_{j=1}^{m}\left|\sigma_j\right|^{p}\cdot d_Y(Sx'_j,Sx''_j)^{p}\Bigg]^{\frac{1}{s'\left(p\right)}}\leq\pi_{\left(p,\mathfrak{m}^L\left(r;r\right)\right)}^{L}(S)^{\frac{p}{s'\left(p\right)}}\cdot\sup\limits_{f\in B_{{X}^{\#}}}\Bigg[\bigg(\sum\limits_{j=1}^{m}\left|\sigma_j\right|^{r}\left|fx'_j-fx''_j\right|^{r}\bigg)^{\frac{p}{r}}\Bigg]^\frac{1}{s'\left(p\right)}.
\end{equation}
On the other hand, we have
\begin{align}\label{hous3}
\Bigg[\sum\limits_{j=1}^{m}\int\limits_{B_{Y^{\#}}}\left|\sigma_j\right|^{p}&\left|\left\langle g,Tx'_j\right\rangle_{(Y^{\#},Y)}-\left\langle g,Tx''_j\right\rangle_{(Y^{\#},Y)}\right|^{p}d\mu(g)\Bigg]^\frac{1}{s}\nonumber \\
&\leq Lip(S)^\frac{p}{s}\cdot\sup\limits_{f\in B_{{X}^{\#}}}\Bigg[\bigg(\sum\limits_{j=1}^{m}\left|\sigma_j\right|^{r}\left|fx'_j-fx''_j\right|^{r}\bigg)^{\frac{p}{r}}\Bigg]^\frac{1}{s}.
\end{align}
Putting (\ref{hous1}), (\ref{hous2}) and (\ref{hous3}) together gives
\begin{align}
\Bigg[\sum\limits_{j=1}^{m}\bigg[\int\limits_{B_{Y^{\#}}}\left|\sigma_j\right|^{s}&\left|\left\langle g,Tx'_j\right\rangle_{(Y^{\#},Y)}-\left\langle g,Tx''_j\right\rangle_{(Y^{\#},Y)}\right|^{s} d\mu(g)\bigg]^\frac{p}{s}\Bigg]^{\frac{1}{p}} \nonumber \\
&\leq\pi_{\left(p,\mathfrak{m}^L\left(r;r\right)\right)}^{L}(S)^{\frac{p}{s'\left(p\right)}}\cdot Lip(S)^\frac{p}{s}\cdot\left\|(\sigma,x',x'')\Big|\ell_r^{L,w}\right\| \nonumber 
\end{align}
and thus the required conclusion follows from Theorem \ref{a}.
\end{proof}

If we combine Theorem \ref{www} and Corollary \ref{zoo}, then we have the following result.

\begin{myCorollary}\label{aaa}
If $1\leq s\leq\infty$, $0< p\leq s\leq\infty$, $p\geq r$, then every Lipschitz $\left(r,\mathfrak{m}^L\left(r;r\right)\right)-$\\summing map $S$ from $X$ into $Y$ is Lipschitz $\left(\mathfrak{m}^L\left(s;p\right),r\right)-$summing and satisfies $$\pi_{\left(\mathfrak{m}^L\left(s;p\right),r\right)}^{L}(S)\leq \pi_{\left(r,\mathfrak{m}^L\left(r;r\right)\right)}^{L}(S)^{\frac{p}{s'\left(p\right)}}\cdot Lip(S)^{\frac{p}{s}}.$$
\end{myCorollary}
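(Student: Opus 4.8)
The plan is simply to chain Corollary~\ref{zoo} and Theorem~\ref{www}, so the argument is essentially bookkeeping with no new analysis. First I would upgrade the hypothesis: a Lipschitz $\left(r,\mathfrak{m}^L\left(r;r\right)\right)$-summing map is automatically Lipschitz $\left(p,\mathfrak{m}^L\left(r;r\right)\right)$-summing once $p\geq r$. Indeed, I would apply Corollary~\ref{zoo} with $p_1=r$, $p_2=p$, $q_1=q_2=r$. The required conditions $0<p_1\leq p_2$, $0<q_1\leq q_2$, $q_j\leq p_j$, and $\frac1{q_1}-\frac1{q_2}\leq\frac1{p_1}-\frac1{p_2}$ all reduce to $p\geq r$ (the index inequality becomes $0\leq\frac1r-\frac1p$). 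Hence $\Pi_{\left(r,\mathfrak{m}^L\left(r;r\right)\right)}^{L}(X,Y)\subset\Pi_{\left(p,\mathfrak{m}^L\left(r;r\right)\right)}^{L}(X,Y)$ and $\pi_{\left(p,\mathfrak{m}^L\left(r;r\right)\right)}^{L}(S)\leq\pi_{\left(r,\mathfrak{m}^L\left(r;r\right)\right)}^{L}(S)$ for the given $S$.

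Next I would invoke Theorem~\ref{www}. Since $S$ is now Lipschitz $\left(p,\mathfrak{m}^L\left(r;r\right)\right)$-summing and the standing hypotheses $1\leq s\leq\infty$, $0<p\leq s\leq\infty$, $p\geq r$ hold, that theorem gives that $S$ is Lipschitz $\left(\mathfrak{m}^L\left(s;p\right),r\right)$-summing and $\pi_{\left(\mathfrak{m}^L\left(s;p\right),r\right)}^{L}(S)\leq\pi_{\left(p,\mathfrak{m}^L\left(r;r\right)\right)}^{L}(S)^{\frac{p}{s'\left(p\right)}}\cdot Lip(S)^{\frac{p}{s}}$, where $s'\left(p\right)$ is determined by $\frac1{s'\left(p\right)}+\frac1s=\frac1p$.

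Finally I would combine the two estimates. Because $p\leq s$ one has $\frac1{s'\left(p\right)}=\frac1p-\frac1s\geq0$, so the exponent $\frac{p}{s'\left(p\right)}$ is non-negative and substituting the larger quantity $\pi_{\left(r,\mathfrak{m}^L\left(r;r\right)\right)}^{L}(S)$ for $\pi_{\left(p,\mathfrak{m}^L\left(r;r\right)\right)}^{L}(S)$ preserves the inequality. This yields $\pi_{\left(\mathfrak{m}^L\left(s;p\right),r\right)}^{L}(S)\leq\pi_{\left(r,\mathfrak{m}^L\left(r;r\right)\right)}^{L}(S)^{\frac{p}{s'\left(p\right)}}\cdot Lip(S)^{\frac{p}{s}}$, which is the claimed bound; $Lip(S)$ is of course unchanged throughout.

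The one point requiring care — the sole obstacle, such as it is — is the verification that the specialization $p_1=r$, $p_2=p$, $q_1=q_2=r$ meets all the index constraints of Corollary~\ref{zoo}, together with the observation that $\frac{p}{s'\left(p\right)}\geq0$ so the monotone substitution in the last step is legitimate. Both are immediate from $r\leq p\leq s$, and no estimates beyond those already contained in Theorem~\ref{www} and Corollary~\ref{zoo} are needed.
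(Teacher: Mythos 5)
Your proposal is correct and matches the paper's own route: the paper obtains Corollary \ref{aaa} precisely by combining Corollary \ref{zoo} (to pass from Lipschitz $\left(r,\mathfrak{m}^L\left(r;r\right)\right)$-summing to Lipschitz $\left(p,\mathfrak{m}^L\left(r;r\right)\right)$-summing with a non-increasing norm) with Theorem \ref{www}. Your verification of the index conditions in Corollary \ref{zoo} and of the non-negativity of the exponent $\frac{p}{s'\left(p\right)}$ is exactly the bookkeeping needed, so nothing is missing.
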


\subsection{The identity on a finite discrete metric space.}
Let $D_n$ stand for the discrete metric space on $n$ points. Assume $1\leq s\leq \infty$, $0< p\leq s\leq \infty$, $p\geq r$. 

J. D. Farmer and W. B. Johnson \cite{J09} proved that 
$$\pi_{\left(r,\mathfrak{m}^L\left(r;r\right)\right)}^{L}(I_{D_n})=\big(2-\frac{2}{n}\big)^\frac{1}{r}$$
for any $r\in[1,\infty]$ and from Corollary \ref{aaa}, we obtain
\begin{align}
\pi_{\left(\mathfrak{m}^L\left(s;p\right),r\right)}^{L}(I_{D_n})&\leq\pi_{\left(r,\mathfrak{m}^L\left(r;r\right)\right)}^{L}(I_{D_n})^{\frac{p}{s'\left(p\right)}}\cdot 1  \nonumber \\
&=\Big[2-\frac{2}{n}\Big]^{\frac{p}{r\cdot s'\left(p\right)}}.\nonumber
\end{align}
In the case that $p=r$, this result gives $\pi_{\left(\mathfrak{m}^L\left(s;p\right),p\right)}^{L}(I_{D_n})=\Big[2-\frac{2}{n}\Big]^{\frac{1}{p}-\frac{1}{s}}$ , proved in \cite{JA12}.

\subsection{The general 'interpolation style' theorem}
Corollary \ref{aaa} is in fact a particular case of the following more general theorem.

\begin{mytheorem}\label{f}
Let $0<\theta<1$; $0<p\leq s, s_{0}, s_{1}\leq\infty$ and $r\leq p$. Define $\frac{1}{s}=\frac{1-\theta}{s_{0}}+\frac{\theta}{s_{1}}$. For Lipschitz map $S$ from $X$ into $Y$, $$\pi_{\left(\mathfrak{m}^L\left(s;p\right),r\right)}^{L}(S)\leq\pi_{\left(\mathfrak{m}^L\left(s_{0};p\right),r\right)}^{L}(S)^{1-\theta}\cdot\pi_{\left(\mathfrak{m}^L\left(s_{1};p\right),r\right)}^{L}(S)^{\theta}.$$
\end{mytheorem}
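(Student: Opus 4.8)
The plan is to reduce the statement to a single interpolation inequality for the functional $\mathfrak{m}^{L}_{(\cdot\,;p)}$ on sequences, and then to feed that inequality through the very definition of a Lipschitz $\left(\mathfrak{m}^L\left(s;p\right),r\right)$-summing map. Precisely, I would first prove that for \emph{every} finite sequence $(\sigma,y',y'')\subset\mathbb{R}\times Y\times Y$ one has
\begin{equation*}
\mathfrak{m}^{L}_{(s;p)}(\sigma,y',y'')\;\leq\;\mathfrak{m}^{L}_{(s_{0};p)}(\sigma,y',y'')^{\,1-\theta}\cdot\mathfrak{m}^{L}_{(s_{1};p)}(\sigma,y',y'')^{\,\theta},
\end{equation*}
and then specialise to $y'_{j}=Sx'_{j}$, $y''_{j}=Sx''_{j}$.

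For the sequence inequality I would invoke the measure description of $\mathfrak{m}^{L}_{(\cdot\,;p)}$ supplied by Proposition \ref{two}. Fix $\mu\in W(B_{Y^{\#}})$ and, for $t\in\{s_{0},s_{1},s\}$ and each index $j$, set $N^{(t)}_{j}=\big(\int_{B_{Y^{\#}}}|\sigma_{j}|^{t}\,|g(y'_{j})-g(y''_{j})|^{t}\,d\mu(g)\big)^{1/t}$, i.e.\ the $L^{t}(\mu)$-norm of $g\mapsto\sigma_{j}\big(g(y'_{j})-g(y''_{j})\big)$. Since $\tfrac1s=\tfrac{1-\theta}{s_{0}}+\tfrac{\theta}{s_{1}}$, the Lyapunov (interpolation) inequality for $L^{t}(\mu)$-norms, a direct consequence of Hölder's inequality, gives $N^{(s)}_{j}\leq\big(N^{(s_{0})}_{j}\big)^{1-\theta}\big(N^{(s_{1})}_{j}\big)^{\theta}$. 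Raising this to the power $p$, summing over $j$, and applying Hölder's inequality with the conjugate exponents $\tfrac1{1-\theta}$ and $\tfrac1\theta$, I obtain
\begin{equation*}
\Big(\sum_{j}\big(N^{(s)}_{j}\big)^{p}\Big)^{1/p}\;\leq\;\Big(\sum_{j}\big(N^{(s_{0})}_{j}\big)^{p}\Big)^{(1-\theta)/p}\cdot\Big(\sum_{j}\big(N^{(s_{1})}_{j}\big)^{p}\Big)^{\theta/p}.
\end{equation*}
Because $\big(N^{(t)}_{j}\big)^{p}=\big(\int_{B_{Y^{\#}}}|\sigma_{j}|^{t}|g(y'_{j})-g(y''_{j})|^{t}\,d\mu(g)\big)^{p/t}$, taking the supremum over $\mu\in W(B_{Y^{\#}})$ turns the left-hand side into $\mathfrak{m}^{L}_{(s;p)}(\sigma,y',y'')$ by Proposition \ref{two}, while on the right-hand side the elementary estimate $\sup_{\mu}(a_{\mu}b_{\mu})\leq(\sup_{\mu}a_{\mu})(\sup_{\mu}b_{\mu})$ together with Proposition \ref{two} bounds it by $\mathfrak{m}^{L}_{(s_{0};p)}(\sigma,y',y'')^{1-\theta}\cdot\mathfrak{m}^{L}_{(s_{1};p)}(\sigma,y',y'')^{\theta}$. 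This is exactly the claimed sequence inequality.

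To finish, I would put $y'_{j}=Sx'_{j}$, $y''_{j}=Sx''_{j}$ and apply the definition of a Lipschitz $\left(\mathfrak{m}^L\left(s_{i};p\right),r\right)$-summing map (Definition \ref{seven}) for $i=0,1$, namely $\mathfrak{m}^{L}_{(s_{i};p)}(\sigma,Sx',Sx'')\leq\pi^{L}_{\left(\mathfrak{m}^L\left(s_{i};p\right),r\right)}(S)\cdot\norm{(\sigma,x',x'')\Big|\ell^{L,w}_{r}}$. Substituting these two bounds into the sequence inequality, the factors $\norm{(\sigma,x',x'')\Big|\ell^{L,w}_{r}}^{1-\theta}$ and $\norm{(\sigma,x',x'')\Big|\ell^{L,w}_{r}}^{\theta}$ recombine into one copy of $\norm{(\sigma,x',x'')\Big|\ell^{L,w}_{r}}$, giving
\begin{equation*}
\mathfrak{m}^{L}_{(s;p)}(\sigma,Sx',Sx'')\;\leq\;\pi^{L}_{\left(\mathfrak{m}^L\left(s_{0};p\right),r\right)}(S)^{\,1-\theta}\,\pi^{L}_{\left(\mathfrak{m}^L\left(s_{1};p\right),r\right)}(S)^{\,\theta}\cdot\norm{(\sigma,x',x'')\Big|\ell^{L,w}_{r}},
\end{equation*}
and, since this holds for all finite sequences, Definition \ref{seven} yields the asserted bound on $\pi^{L}_{\left(\mathfrak{m}^L\left(s;p\right),r\right)}(S)$.

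The step I expect to be the main obstacle is the bookkeeping at the degenerate values of the exponents: Proposition \ref{two} is stated only for $p<t<\infty$, whereas the relation $\tfrac1s=\tfrac{1-\theta}{s_{0}}+\tfrac{\theta}{s_{1}}$ forces one to also allow $s_{0}$ or $s_{1}$ to equal $p$ or $\infty$ (indeed these are precisely the choices $s_{0}=\infty$, $s_{1}=p$ that recover Corollary \ref{aaa}). In those cases one must replace the $L^{t}(\mu)$-interpolation by its trivial counterpart and use the identifications $\mathfrak{m}^{L}_{(p;p)}=\|\cdot\,|\ell^{L,w}_{p}\|$ and $\mathfrak{m}^{L}_{(\infty;p)}=\|\cdot\,|\ell_{p}\|$ from the Concluding Remarks \ref{66}, checking that the supremum-over-$\mu$ description of $\mathfrak{m}^{L}_{(t;p)}$ persists there (for $t=p$ it is immediate since the inner exponent $p/t$ equals $1$; for $t=\infty$ one approximates $d_{Y}(y'_{j},y''_{j})$ simultaneously over the finitely many $j$ by a finitely supported measure). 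A smaller technical point is that the supremum over $\mu$ must be taken only \emph{after} the power $p$ and Hölder's inequality have been applied, which is why the argument is organised around the auxiliary quantities $N^{(t)}_{j}$ rather than directly around the three mixed norms.
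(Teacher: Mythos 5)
Your proposal is correct, but it follows a genuinely different route from the paper. You first prove a sequence-level interpolation inequality $\mathfrak{m}^{L}_{(s;p)}(\sigma,y',y'')\leq\mathfrak{m}^{L}_{(s_{0};p)}(\sigma,y',y'')^{1-\theta}\cdot\mathfrak{m}^{L}_{(s_{1};p)}(\sigma,y',y'')^{\theta}$ by passing to the supremum-over-measures description of the mixed norm (Proposition \ref{two}), applying Lyapunov's $L^{t}(\mu)$-interpolation for each index $j$, then H\"older with exponents $\tfrac{1}{1-\theta}$, $\tfrac{1}{\theta}$ on the sum over $j$, and only afterwards insert $y'=Sx'$, $y''=Sx''$ and Definition \ref{seven}. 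The paper never leaves the factorization definition (\ref{three}): for $\epsilon>0$ it chooses near-optimal, normalized splitting sequences $\tau_{0}\in\ell_{s_{0}'\left(p\right)}$, $\tau_{1}\in\ell_{s_{1}'\left(p\right)}$ for $(\sigma,Sx',Sx'')$, forms the geometric interpolant $\left|\tau_j\right|=\left|\tau_{j,0}\right|^{1-\theta}\left|\tau_{j,1}\right|^{\theta}$, and combines H\"older on $\left\|\tau\big|\ell_{s'\left(p\right)}\right\|$ with the pointwise factorization of $\left|\frac{\sigma_j}{\tau_j}\right|\left|fx'_j-fx''_j\right|$ to bound $\mathfrak{m}^{L}_{(s;p)}(\sigma,Sx',Sx'')$ directly, letting $\epsilon\to 0^{+}$ at the end. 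Each approach has its advantages: the paper's primal argument is self-contained (it needs only the definition of the mixed norm, not Proposition \ref{two}) and treats the endpoint exponents $s_{i}=p$ and $s_{i}=\infty$ uniformly, since the factorization and the H\"older step make sense there without change; your dual, measure-based argument isolates a clean Lyapunov-type inequality for the Lipschitz mixed norms, of independent interest, and avoids the $\epsilon$-bookkeeping, but, as you correctly flag, it leans on Proposition \ref{two}, stated only for $p<s_{i}<\infty$, so the endpoint cases (precisely those needed to recover Corollary \ref{aaa}) must be handled via the identifications $\mathfrak{m}^{L}_{(p;p)}=\left\|\cdot\big|\ell_{p}^{L,w}\right\|$ and $\mathfrak{m}^{L}_{(\infty;p)}=\left\|\cdot\big|\ell_{p}\right\|$ from Concluding Remarks \ref{66}. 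These extra steps are easy and your sketch of them is sound, though simpler than you suggest: for $t=p$ one pulls the sum inside the integral and bounds by the weak Lipschitz $p$-norm, and for $t=\infty$ the crude bound $N^{(\infty)}_{j}\leq\left|\sigma_j\right|d_{Y}(y'_j,y''_j)$ suffices on the right-hand side, so no approximation by finitely supported measures is needed; the degenerate left-hand cases $s=p$ or $s=\infty$ occur only when $s_{0}=s_{1}$ and are trivial.
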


\begin{proof}
The $p-$conjugate $s'\left(p\right)$; $s_{0}'\left(p\right)$ and $s_{1}'\left(p\right)$  are determined by the following equations
\begin{equation}\nonumber
\frac{1}{s'\left(p\right)}+\frac{1}{s}=\frac{1}{p}; \ \frac{1}{s_{0}'\left(p\right)}+\frac{1}{s_{0}}=\frac{1}{p}\  and \ \frac{1}{s_{1}'\left(p\right)}+\frac{1}{s_{1}}=\frac{1}{p},
\end{equation}
respectively. Note that $\frac{1}{s'\left(p\right)}=\frac{1-\theta}{s_{0}'\left(p\right)}+\frac{\theta}{s_{1}'\left(p\right)}$. 

Let $\sigma_1,\cdot\cdot\cdot,\sigma_m\in\mathbb{R}$; $x'_1,\cdot\cdot\cdot,x'_m$, $x''_1,\cdot\cdot\cdot,x''_m\in X$ and $m\in\mathbb{N}$. Given $\epsilon > 0$, from (\ref{three}), for each $i=0, 1$ there exists a sequence $\tau_{i}\in\ell_{s_{i}'\left(p\right)}$ such that 
\begin{align}
\left\|\tau_{i}\Big|\ell_{s_{i}'\left(p\right)}\right\|\left\|\big(\frac{\sigma}{\tau_{i}},Sx',Sx''\big)\Big|\ell_{s_{i}}^{L,w}\right\|&\leq (1+\epsilon)\cdot\mathfrak{m}_{(s_{i};p)}^{L}(\sigma,Sx',Sx'') \nonumber \\
&\leq (1+\epsilon)\cdot\pi_{\left(\mathfrak{m}^L\left(s_{i};p\right),r\right)}^{L}(S)\cdot\left\|(\sigma,x',x'')\Big|\ell_r^{L,w}\right\|. \nonumber 
\end{align}
Moreover, dividing by appropriate constant we may assume that in fact
$$\left\|\tau_{i}\Big|\ell_{s_{i}'\left(p\right)}\right\|\leq (1+\epsilon)\cdot\pi_{\left(\mathfrak{m}^L\left(s_{i};p\right),r\right)}^{L}(S)\cdot\left\|(\sigma,x',x'')\Big|\ell_r^{L,w}\right\|$$ and $$\left\|\big(\frac{\sigma}{\tau_{i}},Sx',Sx''\big)\Big|\ell_{s_{i}}^{L,w}\right\|\leq 1.$$
For $1\leq j\leq m$, set $\left|\tau_j\right|=\left|\tau_{j,0}\right|^{1-\theta}\cdot\left|\tau_{j,1}\right|^{\theta}$. Then by the H\"older inequality, we have
\begin{align}
\left\|\tau\Big|\ell_{s'\left(p\right)}\right\|&\leq\left\|\tau_0\Big|\ell_{s_{0}'\left(p\right)}\right\|^{1-\theta}\cdot\left\|\tau_1\Big|\ell_{s_{1}'\left(p\right)}\right\|^{\theta} \nonumber \\
&\leq (1+\epsilon)\cdot\pi_{\left(\mathfrak{m}^L\left(s_{0};p\right),r\right)}^{L}(S)^{1-\theta}\cdot\pi_{\left(\mathfrak{m}^L\left(s_{1};p\right),r\right)}^{L}(S)^{\theta}\cdot\left\|(\sigma,x',x'')\Big|\ell_r^{L,w}\right\| \nonumber
\end{align}
On the other hand, it follows from 
$$\left|\frac{\sigma_j}{\tau_j}\right|\cdot\left|fx'_j-fx''_j\right|=\frac{\left|\sigma_j\right|^{1-\theta}}{\left|\tau_{j,0}\right|^{1-\theta}}\cdot\left|fx'_j-fx''_j\right|^{1-\theta}\cdot\frac{\left|\sigma_j\right|^{\theta}}{\left|\tau_{j,1}\right|^{\theta}}\cdot\left|fx'_j-fx''_j\right|^{\theta}$$ that
$$\left\|\big(\frac{\sigma}{\tau},Sx',Sx''\big)\Big|\ell_{s}^{L,w}\right\|\leq\prod\limits_{i=0}^{1}\left\|\big(\frac{\sigma}{\tau_{i}},Sx',Sx''\big)\Big|\ell_{s_{i}}^{L,w}\right\|\leq 1.$$
Then 
\begin{align}
\mathfrak{m}_{(s;p)}^{L}(\sigma,Sx',Sx'')&\leq\left\|\tau\Big|\ell_{s'\left(p\right)}\right\|\cdot\left\|\big(\frac{\sigma}{\tau},Sx',Sx''\big)\Big|\ell_{s}^{L,w}\right\|\nonumber \\
&\leq (1+\epsilon)\cdot\pi_{\left(\mathfrak{m}^L\left(s_{0};p\right),r\right)}^{L}(S)^{1-\theta}\cdot\pi_{\left(\mathfrak{m}^L\left(s_{1};p\right),r\right)}^{L}(S)^{\theta}\cdot\left\|(\sigma,x',x'')\Big|\ell_r^{L,w}\right\|.\nonumber
\end{align}
Hence $S$ is a Lipschitz $\left(\mathfrak{m}^L\left(s;p\right),r\right)-$summing map with $$\pi_{\left(\mathfrak{m}^L\left(s;p\right),r\right)}^{L}(S)\leq (1+\epsilon)\cdot\pi_{\left(\mathfrak{m}^L\left(s_{0};p\right),r\right)}^{L}(S)^{1-\theta}\cdot\pi_{\left(\mathfrak{m}^L\left(s_{1};p\right),r\right)}^{L}(S)^{\theta}.$$
By letting $\epsilon\longrightarrow 0^{+}$, our result is proved.\\
\end{proof}

Let $0<p\leq s\leq\infty$ and $r\leq p$. We say that a metric space $X$ is an $((s;p),r)-$space if the identity map on $X$ is Lipschitz $\left(\mathfrak{m}^L\left(s;p\right),r\right)-$summing. The following corollary shows that the class of $((s;p),r)-$spaces does not depend on $p$ and $r$.

\begin{myCorollary}
If $0<p\leq s\leq\infty$, $r_{0}\leq r_{1}$ and $r_{1}\leq p$, then $X$ is an $((s;r_{0}),r_{0})-$space if and only if it is an $((s;p),r_{1})-$space. Moreover $$\pi_{\left(\mathfrak{m}^L\left(s;p\right),r_{1}\right)}^{L}(I_{X})\leq\pi_{\left(\mathfrak{m}^L\left(s;r_{0}\right),r_{0}\right)}^{L}(I_{X})\leq\pi_{\left(\mathfrak{m}^L\left(s;p\right),r_{1}\right)}^{L}(I_{X})^{\frac{1}{\theta}}$$ where $\theta$ is defined by $\frac{1}{r_{1}}=\frac{1-\theta}{s}+\frac{\theta}{r_{0}}$.
\end{myCorollary}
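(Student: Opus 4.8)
The plan is to prove the two inequalities in the displayed chain separately; the asserted equivalence of the two notions of space then follows at once, since finiteness of either constant forces finiteness of the other.

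\textbf{Left inequality} $\pi_{\left(\mathfrak{m}^L\left(s;p\right),r_{1}\right)}^{L}(I_{X})\le\pi_{\left(\mathfrak{m}^L\left(s;r_{0}\right),r_{0}\right)}^{L}(I_{X})$, i.e.\ every $((s;r_{0}),r_{0})$-space is an $((s;p),r_{1})$-space. This is a direct application of the general inclusion Proposition~\ref{see} to the map $S=I_{X}$, with the parameter choices $s_{1}=s_{2}=s$, $p_{1}=r_{0}$, $p_{2}=p$, source outer exponent $r_{0}$ and target outer exponent $r_{1}$. With these choices the hypotheses of Proposition~\ref{see} reduce to $r_{0}\le r_{1}$, $r_{0}\le p$, $r_{1}\le p$ and $\tfrac{1}{r_{0}}-\tfrac{1}{r_{1}}\le\tfrac{1}{r_{0}}-\tfrac{1}{p}$ (the last being again just $r_{1}\le p$), all of which are part of our hypotheses; the conclusion is precisely the inclusion $\Pi_{\left(\mathfrak{m}^L\left(s;r_{0}\right),r_{0}\right)}^{L}(X,X)\subset\Pi_{\left(\mathfrak{m}^L\left(s;p\right),r_{1}\right)}^{L}(X,X)$ together with the stated norm bound.

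\textbf{Right inequality} $\pi_{\left(\mathfrak{m}^L\left(s;r_{0}\right),r_{0}\right)}^{L}(I_{X})\le\pi_{\left(\mathfrak{m}^L\left(s;p\right),r_{1}\right)}^{L}(I_{X})^{1/\theta}$, i.e.\ every $((s;p),r_{1})$-space is an $((s;r_{0}),r_{0})$-space. This is the substantive half, and I would attack it through the interpolation Theorem~\ref{f}. The key observation is that, with the second index held equal to $r_{0}$, the family $\left(\mathfrak{m}^L(\,\cdot\,;r_{0}),r_{0}\right)$ is a first-index interpolation scale whose lower end is $\mathfrak{m}^L(r_{0};r_{0})=\ell_{r_{0}}^{L,w}$, for which $\pi_{\left(\mathfrak{m}^L\left(r_{0};r_{0}\right),r_{0}\right)}^{L}(I_{X})=1$ by the Concluding Remarks~\ref{66}. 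Feeding Theorem~\ref{f} a decomposition $\tfrac{1}{s}=\tfrac{1-\eta}{\bar s}+\tfrac{\eta}{r_{0}}$ for a suitable auxiliary first index $\bar s$ and parameter $\eta$, with the two endpoints $\bigl(\mathfrak{m}^L(\bar s;r_{0}),r_{0}\bigr)$ and $\bigl(\mathfrak{m}^L(r_{0};r_{0}),r_{0}\bigr)$, the trivial factor disappears and one is left with a bound of the shape $\pi_{\left(\mathfrak{m}^L\left(s;r_{0}\right),r_{0}\right)}^{L}(I_{X})\le\pi_{\left(\mathfrak{m}^L\left(\bar s;r_{0}\right),r_{0}\right)}^{L}(I_{X})^{1-\eta}$. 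It then remains to dominate the auxiliary constant $\pi_{\left(\mathfrak{m}^L\left(\bar s;r_{0}\right),r_{0}\right)}^{L}(I_{X})$ by an appropriate power of $\pi_{\left(\mathfrak{m}^L\left(s;p\right),r_{1}\right)}^{L}(I_{X})$; here the hypothesis that $X$ is an $((s;p),r_{1})$-space, the inequalities $r_{0}\le r_{1}\le p\le s$, and Corollary~\ref{aaa} (which expresses every ``$((\,\cdot\,;r_{0}),r_{0})$-constant'' through the Lipschitz $r_{0}$-summing one) enter, and the arithmetic of the $r$-conjugate indices, organized around $\tfrac{1}{s'\left(r_{1}\right)}=\tfrac{1}{r_{1}}-\tfrac{1}{s}=\theta\bigl(\tfrac{1}{r_{0}}-\tfrac{1}{s}\bigr)$, must be arranged so that the accumulated exponents collapse to exactly $1/\theta$, with $\theta$ as defined by $\tfrac{1}{r_{1}}=\tfrac{1-\theta}{s}+\tfrac{\theta}{r_{0}}$.

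I expect this last step — choosing the auxiliary first index and matching the exponents so that the auxiliary $((\bar s;r_{0}),r_{0})$-constant really is controlled by a power of the given $((s;p),r_{1})$-constant — to be the main obstacle; everything else is a routine combination of Proposition~\ref{see}, Corollary~\ref{aaa} and Theorem~\ref{f}. Once both inequalities are established, the ``if and only if'' and the full chain of the statement are immediate, and the previously recorded special value $\pi_{\left(\mathfrak{m}^L\left(s;p\right),p\right)}^{L}(I_{D_n})=\bigl[2-\tfrac2n\bigr]^{1/p-1/s}$ serves as a sanity check, since there the right-hand inequality is in fact an equality.
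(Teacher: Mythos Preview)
Your left inequality is exactly right and matches the paper: Proposition~\ref{see} with $s_{1}=s_{2}=s$, $p_{1}=r_{0}$, $p_{2}=p$, outer indices $r_{0}\to r_{1}$ gives it immediately.

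For the right inequality, however, your plan has a real gap, and the paper's route is different. Your scheme introduces an auxiliary first index $\bar s>s$, interpolates $s$ between $\bar s$ and $r_{0}$, and then hopes to dominate $\pi^{L}_{(\mathfrak{m}^{L}(\bar s;r_{0}),r_{0})}(I_{X})$ by a power of $\pi^{L}_{(\mathfrak{m}^{L}(s;p),r_{1})}(I_{X})$ via Corollary~\ref{aaa}. But Corollary~\ref{aaa} only bounds these mixing constants from above by powers of the Lipschitz $r_{0}$-summing norm $\pi^{L}_{r_{0}}(I_{X})$, which is \emph{not} in general controlled by the $((s;p),r_{1})$-constant (and need not even be finite for a general $((s;p),r_{1})$-space). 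Monotonicity goes the wrong way too: since $\bar s>s$, Proposition~\ref{see} gives $\pi^{L}_{(\mathfrak{m}^{L}(\bar s;r_{0}),r_{0})}(I_{X})\ge\pi^{L}_{(\mathfrak{m}^{L}(s;r_{0}),r_{0})}(I_{X})$, so you cannot simply pass from $\bar s$ back down to $s$. The ``main obstacle'' you flag is not a bookkeeping issue; as stated the argument does not close.

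The paper avoids any auxiliary $\bar s$ by a bootstrap. First apply the \emph{composition} formula Proposition~\ref{hous} to $I_{X}=I_{X}\circ I_{X}$ with parameters $(t;s;r;p;q)=(s;p;r_{1};r_{0};r_{0})$, obtaining
\[
\pi^{L}_{(\mathfrak{m}^{L}(s;r_{0}),r_{0})}(I_{X})\le \pi^{L}_{(\mathfrak{m}^{L}(s;p),r_{1})}(I_{X})\cdot \pi^{L}_{(\mathfrak{m}^{L}(r_{1};r_{0}),r_{0})}(I_{X}).
\]
Now apply Theorem~\ref{f} not to $s$ but to the intermediate first index $r_{1}$, interpolated between $s_{0}=s$ and $s_{1}=r_{0}$ with exactly the $\theta$ of the statement ($\tfrac{1}{r_{1}}=\tfrac{1-\theta}{s}+\tfrac{\theta}{r_{0}}$); since $\pi^{L}_{(\mathfrak{m}^{L}(r_{0};r_{0}),r_{0})}(I_{X})=1$ this yields
\[
\pi^{L}_{(\mathfrak{m}^{L}(r_{1};r_{0}),r_{0})}(I_{X})\le \pi^{L}_{(\mathfrak{m}^{L}(s;r_{0}),r_{0})}(I_{X})^{\,1-\theta}.
\]
Substituting back gives an inequality with the unknown on both sides,
\[
\pi^{L}_{(\mathfrak{m}^{L}(s;r_{0}),r_{0})}(I_{X})\le \pi^{L}_{(\mathfrak{m}^{L}(s;p),r_{1})}(I_{X})\cdot \pi^{L}_{(\mathfrak{m}^{L}(s;r_{0}),r_{0})}(I_{X})^{\,1-\theta},
\]
which one solves to get $\pi^{L}_{(\mathfrak{m}^{L}(s;r_{0}),r_{0})}(I_{X})\le \pi^{L}_{(\mathfrak{m}^{L}(s;p),r_{1})}(I_{X})^{1/\theta}$. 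The missing idea in your proposal is precisely this composition-plus-self-improvement step; once you see it, no auxiliary index and no appeal to Corollary~\ref{aaa} is needed.
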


\begin{proof}
Assume $X$ is an $((s;r_{0}),r_{0})-$space. By Proposition \ref{see} we have $$\pi_{\left(\mathfrak{m}^L\left(s;p\right),r_{1}\right)}^{L}(I_{X})\leq\pi_{\left(\mathfrak{m}^L\left(s;r_{0}\right),r_{0}\right)}^{L}(I_{X}).$$

Conversely, suppose that $X$ is an $((s;p),r_{1})-$space. The composition property from Proposition \ref{hous} provides us with the inequality $$\pi_{\left(\mathfrak{m}^L\left(s;r_{0}\right),r_{0}\right)}^{L}(I_{X})\leq\pi_{\left(\mathfrak{m}^L\left(s;p\right),r_{1}\right)}^{L}(I_{X})\cdot\pi_{\left(\mathfrak{m}^L\left(r_{1};r_{0}\right),r_{0}\right)}^{L}(I_{X}).$$
Now from Theorem \ref{f} we have 
\begin{align}
\pi_{\left(\mathfrak{m}^L\left(r_{1};r_{0}\right),r_{0}\right)}^{L}(I_{X})&\leq\pi_{\left(\mathfrak{m}^L\left(s;r_{0}\right),r_{0}\right)}^{L}(I_{X})^{1-\theta}\cdot\pi_{\left(\mathfrak{m}^L\left(r_{0};r_{0}\right),r_{0}\right)}^{L}(I_{X})^{\theta}\nonumber \\
&=\pi_{\left(\mathfrak{m}^L\left(s;r_{0}\right),r_{0}\right)}^{L}(I_{X})^{1-\theta}.\nonumber 
\end{align}
So we obtain 
\begin{equation}\label{tee}
\pi_{\left(\mathfrak{m}^L\left(s;r_{0}\right),r_{0}\right)}^{L}(I_{X})\leq\pi_{\left(\mathfrak{m}^L\left(s;p\right),r_{1}\right)}^{L}(I_{X})\cdot\pi_{\left(\mathfrak{m}^L\left(s;r_{0}\right),r_{0}\right)}^{L}(I_{X})^{1-\theta}.
\end{equation}
This finally leads to $$\pi_{\left(\mathfrak{m}^L\left(s;r_{0}\right),r_{0}\right)}^{L}(I_{X})\leq\pi_{\left(\mathfrak{m}^L\left(s;p\right),r_{1}\right)}^{L}(I_{X})^{\frac{1}{\theta}}.$$ 
\end{proof}

\subsection{Lipschitz $\left(r,\mathfrak{m}^L\left(r;r\right)\right)-$summing maps for $0<r<1$} 
A. Pietsch \cite[Sec. 21.2.11]{P80} proved that all operator ideals $\mathfrak{P}_{r}$ with $0<r<1$ coincide. We need the following interesting result that is useful to prove that the classes of Lipschitz $\left(r,\mathfrak{m}^L\left(r;r\right)\right)-$\\summing maps with $0<r<1$ coincide.

\begin{mytheorem}
If $0<p\leq q<s<1$, then $\Pi_{\left(\mathfrak{m}^L\left(s;q\right),p\right)}^{L}(X,Y)\cap\mathbb{L}_{x_{0}}(X,Y)=\mathbb{L}_{x_{0}}(X,Y)$.
\end{mytheorem}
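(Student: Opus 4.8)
The plan is to imitate Pietsch's classical argument \cite[Sec.~21.2.11]{P80} that the operator ideals $\mathfrak{P}_r$ collapse for $0<r<1$, transported to the Lipschitz setting via the type-$(s,p)$ machinery already assembled in the excerpt. The containment $\Pi_{\left(\mathfrak{m}^L\left(s;q\right),p\right)}^{L}(X,Y)\cap\mathbb{L}_{x_{0}}(X,Y)\subset\mathbb{L}_{x_{0}}(X,Y)$ is trivial, so the whole content is the reverse inclusion: \emph{every} Lipschitz map $S\in\mathbb{L}_{x_{0}}(X,Y)$ is automatically Lipschitz $\left(\mathfrak{m}^L\left(s;q\right),p\right)$-summing when $0<p\le q<s<1$.

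First I would reduce to a statement about the Lipschitz dual operator. By Theorem~\ref{type10}, for $0<p\le q<s<2$ it suffices to show that $S^{\#}\in\mathfrak{L}(Y^{\#},X^{\#})$ is of $(s,p)$-type, since then $S$ is Lipschitz $\left(\mathfrak{m}^L\left(s;q\right),p\right)$-summing with $\pi_{\left(\mathfrak{m}^L\left(s;q\right),p\right)}^{L}(S)\le\mathbf{T}_{(s,p)}(S^{\#})$. Now invoke the regime $0<p<s<1$: Theorem~\ref{type4} asserts precisely $\mathfrak{T}_{(s,p)}=\mathfrak{L}$, i.e.\ \emph{every} bounded linear operator is of $(s,p)$-type in this range. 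Since $S^{\#}$ is a bounded linear operator between the Banach spaces $Y^{\#}$ and $X^{\#}$ (with $\|S^{\#}\|=Lip(S)$, as recorded in Concluding Remark~\ref{66}(5)), Theorem~\ref{type4} applies directly and gives $S^{\#}\in\mathfrak{T}_{(s,p)}$. Combining the two inputs yields $\pi_{\left(\mathfrak{m}^L\left(s;q\right),p\right)}^{L}(S)\le\mathbf{T}_{(s,p)}(S^{\#})<\infty$, which is exactly the claim that $S\in\Pi_{\left(\mathfrak{m}^L\left(s;q\right),p\right)}^{L}(X,Y)$.

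I would then check that the hypotheses line up: Theorem~\ref{type10} requires $0<p\le q<s<2$, and we are given $0<p\le q<s<1$, so this is satisfied; Theorem~\ref{type4} requires $0<p<s<1$, and since $p\le q<s<1$ we indeed have $p<s<1$ (the case $p=s$ is excluded because $p\le q<s$). The only genuinely delicate point is that one must make sure the two cited theorems are being applied to the \emph{same} operator and that the Banach-space structure of $Y^{\#},X^{\#}$ is what $\mathfrak{T}_{(s,p)}$ and $\mathfrak{T}_{(s,p)}=\mathfrak{L}$ refer to — this is immediate since $Y^{\#},X^{\#}$ are honest Banach spaces and $S^{\#}$ is an honest element of $\mathfrak{L}(Y^{\#},X^{\#})$. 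Hence the proof is short: it is the composition of Theorem~\ref{type10} and Theorem~\ref{type4}, with no new estimates needed.

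The main obstacle, such as it is, is purely bookkeeping with the index ranges — verifying that $q<s<1$ together with $p\le q$ legitimately places us simultaneously in the hypothesis region of Theorem~\ref{type10} (which needs $q<s$) and of Theorem~\ref{type4} (which needs $p<s<1$), and noting that the intermediate parameter $q$ plays no active role beyond being squeezed between $p$ and $s$. No Pietsch domination/factorization, no measure-theoretic density argument, and no direct manipulation of $\mu_{s}$-stable integrals are required here, because all of that has already been absorbed into Theorem~\ref{type4}; the work was done once and for all in \cite[Sec.~21]{P80}.
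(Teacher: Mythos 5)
Your proposal is correct and follows essentially the same route as the paper: the paper's proof likewise takes an arbitrary $S\in\mathbb{L}_{x_{0}}(X,Y)$, applies Theorem~\ref{type4} to conclude that the Lipschitz dual operator $S^{\#}\in\mathfrak{L}(Y^{\#},X^{\#})$ is of $(s,p)$-type, and then invokes Theorem~\ref{type10} to deduce that $S$ is Lipschitz $\left(\mathfrak{m}^L\left(s;q\right),p\right)$-summing. Your additional verification of the index ranges ($p\le q<s<1$ giving both $p<s<1$ and $p\le q<s<2$) is exactly the bookkeeping implicit in the paper's argument.
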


\begin{proof}
Let $S\in\mathbb{L}_{x_{0}}(X,Y)$ with Lipschitz dual operator $S^{\#}\in\mathfrak{L}(Y^{\#},X^{\#})$. From Theorem \ref{type4} we obtain the operator $S^{\#}$ is of $(s,p)-$type. Hence from Theorem \ref{type10} we get $S$ is a Lipschitz $\left(\mathfrak{m}^L\left(s;q\right),p\right)-$summing map.\\
\end{proof}

In the case that $p=q$, if we combine Theorem \ref{type4} and Corollary \ref{type5}, then we have the following result.

\begin{myCorollary}\label{type6}
If $0<p<s<1$, then $\Pi_{\left(\mathfrak{m}^L\left(s;p\right),p\right)}^{L}(X,Y)\cap\mathbb{L}_{x_{0}}(X,Y)=\mathbb{L}_{x_{0}}(X,Y)$.
\end{myCorollary}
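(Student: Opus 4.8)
The plan is to derive the statement as the diagonal case $p=q$ of the preceding theorem, by combining the Maurey-type identity of Theorem~\ref{type4} with the ``type implies summing'' transfer of Corollary~\ref{type5}. One inclusion is immediate: any member of $\Pi_{\left(\mathfrak{m}^L\left(s;p\right),p\right)}^{L}(X,Y)\cap\mathbb{L}_{x_{0}}(X,Y)$ is by definition a base-point-preserving Lipschitz map, so $\Pi_{\left(\mathfrak{m}^L\left(s;p\right),p\right)}^{L}(X,Y)\cap\mathbb{L}_{x_{0}}(X,Y)\subseteq\mathbb{L}_{x_{0}}(X,Y)$. Hence all the work goes into the reverse inclusion $\mathbb{L}_{x_{0}}(X,Y)\subseteq\Pi_{\left(\mathfrak{m}^L\left(s;p\right),p\right)}^{L}(X,Y)$.

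First I would fix an arbitrary $S\in\mathbb{L}_{x_{0}}(X,Y)$ and pass to its Lipschitz dual operator $S^{\#}\colon Y^{\#}\to X^{\#}$; by the discussion of Lipschitz duals in the Concluding Remarks~\ref{66}, $S^{\#}$ is a well-defined bounded linear operator with $\left\|S^{\#}\right\|_{\mathfrak{L}(Y^{\#},X^{\#})}=Lip(S)$, so in particular $S^{\#}\in\mathfrak{L}(Y^{\#},X^{\#})$.

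Next, since $0<p<s<1$, Theorem~\ref{type4} gives $\mathfrak{T}_{(s,p)}=\mathfrak{L}$, so $S^{\#}$ is of $(s,p)$-type with finite type constant $\mathbf{T}_{(s,p)}(S^{\#})$. As $0<p<s<1<2$, the hypotheses of Corollary~\ref{type5} are met, and it applies to $S$: the $(s,p)$-type of $S^{\#}$ forces $S$ to be Lipschitz $\left(\mathfrak{m}^L\left(s;p\right),p\right)$-summing, with $\pi_{\left(\mathfrak{m}^L\left(s;p\right),p\right)}^{L}(S)\leq\mathbf{T}_{(s,p)}(S^{\#})$. Thus $S\in\Pi_{\left(\mathfrak{m}^L\left(s;p\right),p\right)}^{L}(X,Y)$, which yields the reverse inclusion and hence the claimed equality.

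The expected difficulty here is essentially nil, since the corollary is a direct consequence of the two cited results; the only points requiring care are the bookkeeping: checking that the parameter hypotheses of Theorem~\ref{type4} and Corollary~\ref{type5} are indeed satisfied (both require only $0<p<s<1$, which is stronger than the $0<p<s<2$ needed by Corollary~\ref{type5}), and that the passage to the Lipschitz dual is legitimate, which is precisely the content recalled in the Concluding Remarks. If a quantitative refinement is desired, one could additionally record $\pi_{\left(\mathfrak{m}^L\left(s;p\right),p\right)}^{L}(S)\leq\mathbf{T}_{(s,p)}(S^{\#})\leq c\cdot Lip(S)$ with the Maurey constant $c$ implicit in Theorem~\ref{type4}, but this is not needed for the set equality.
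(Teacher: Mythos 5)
Your argument is correct and follows exactly the paper's route: the paper obtains Corollary~\ref{type6} by taking $p=q$ and combining Theorem~\ref{type4} (every bounded linear operator, in particular $S^{\#}$, is of $(s,p)$-type when $0<p<s<1$) with Corollary~\ref{type5} (type of the Lipschitz dual implies $S$ is Lipschitz $\left(\mathfrak{m}^L\left(s;p\right),p\right)$-summing). Your parameter checks and the trivial-inclusion remark are fine; nothing is missing.
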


\begin{mytheorem}
If $0<p<s<1$, then $\Pi_{\left(p,\mathfrak{m}^L\left(p;p\right)\right)}^{L}(X,Y)=\Pi_{\left(s,\mathfrak{m}^L\left(s;s\right)\right)}^{L}(X,Y)$.
\end{mytheorem}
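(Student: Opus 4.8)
The plan is to establish the two inclusions separately; both are short and only recombine results already proved above, so no genuinely new estimate is needed.

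\emph{First inclusion} $\Pi_{(p,\mathfrak{m}^{L}(p;p))}^{L}(X,Y)\subset\Pi_{(s,\mathfrak{m}^{L}(s;s))}^{L}(X,Y)$: I would simply apply Corollary~\ref{zoo} with the choice $p_{1}=q_{1}=p$ and $p_{2}=q_{2}=s$. The hypotheses $0<p_{1}\le p_{2}$, $0<q_{1}\le q_{2}$ and $q_{j}\le p_{j}$ hold trivially, while the remaining condition $\frac{1}{q_{1}}-\frac{1}{q_{2}}\le\frac{1}{p_{1}}-\frac{1}{p_{2}}$ degenerates into the equality $\frac{1}{p}-\frac{1}{s}=\frac{1}{p}-\frac{1}{s}$. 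Corollary~\ref{zoo} then yields the inclusion together with the norm estimate $\pi_{(s,\mathfrak{m}^{L}(s;s))}^{L}(T)\le\pi_{(p,\mathfrak{m}^{L}(p;p))}^{L}(T)$. (One could equally well combine the monotonicity of the mixed quasi-norm $\mathfrak{m}_{(s;p)}^{L}$ with respect to its first parameter, recorded in the Concluding Remarks, with Theorem~\ref{d}.)

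\emph{Second inclusion} $\Pi_{(s,\mathfrak{m}^{L}(s;s))}^{L}(X,Y)\subset\Pi_{(p,\mathfrak{m}^{L}(p;p))}^{L}(X,Y)$: here the idea is to factor an arbitrary $T\in\Pi_{(s,\mathfrak{m}^{L}(s;s))}^{L}(X,Y)$ trivially as $T=T\circ I_{X}$. Because $0<p<s<1$, Corollary~\ref{type6} applies and tells us that every Lipschitz map in $\mathbb{L}_{x_{0}}(X,X)$ is Lipschitz $(\mathfrak{m}^{L}(s;p),p)$-summing; in particular $I_{X}\in\Pi_{(\mathfrak{m}^{L}(s;p),p)}^{L}(X,X)$. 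Now I would invoke the composition Corollary~\ref{manaf100} (whose hypothesis $0<p\le s\le\infty$ is satisfied) with $S=I_{X}$ and with our $T$ playing the role of the second factor; it delivers $T=T\circ I_{X}\in\Pi_{(p,\mathfrak{m}^{L}(p;p))}^{L}(X,Y)$, together with the bound $\pi_{(p,\mathfrak{m}^{L}(p;p))}^{L}(T)\le\pi_{(s,\mathfrak{m}^{L}(s;s))}^{L}(T)\cdot\pi_{(\mathfrak{m}^{L}(s;p),p)}^{L}(I_{X})$.

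I do not expect a real obstacle: the whole argument is a matter of correctly matching the hypotheses of Corollaries~\ref{zoo}, \ref{type6} and \ref{manaf100}. The only points deserving a line of care are that $I_{X}$ indeed belongs to $\mathbb{L}_{x_{0}}(X,X)$ (it fixes the base point and is $1$-Lipschitz, so Corollary~\ref{type6} is applicable) and that the summing conditions involve $T$ only through the quantities $d_{Y}(Tx',Tx'')$, so pre-composition with the base-point preserving map $I_{X}$ changes nothing. Putting the two inclusions together gives the asserted equality, and by transitivity it follows that all the classes $\Pi_{(r,\mathfrak{m}^{L}(r;r))}^{L}(X,Y)$ with $0<r<1$ coincide, which is the nonlinear analogue of Pietsch's statement in \cite[Sec.~21.2.11]{P80}.
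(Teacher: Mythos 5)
Your proposal is correct and follows essentially the same route as the paper: the first inclusion via Corollary~\ref{zoo} with $p_1=q_1=p$, $p_2=q_2=s$, and the converse by writing $T=T\circ I_X$, using Corollary~\ref{type6} to get $I_X\in\Pi_{\left(\mathfrak{m}^L\left(s;p\right),p\right)}^{L}(X,X)$, and composing via Corollary~\ref{manaf100}. The only difference is that you spell out the hypothesis checks in more detail than the paper does, which is fine.
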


\begin{proof}
From Corollary \ref{zoo} we have $\Pi_{\left(p,\mathfrak{m}^L\left(p;p\right)\right)}^{L}(X,Y)\subset\Pi_{\left(s,\mathfrak{m}^L\left(s;s\right)\right)}^{L}(X,Y)$. To show the converse, let $T\in\Pi_{\left(s,\mathfrak{m}^L\left(s;s\right)\right)}^{L}(X,Y)$. From Corollary \ref{type6} we have $I_{X}$ is a Lipschitz $\left(\mathfrak{m}^L\left(s;p\right),p\right)-$summing map. Hence from Corollary \ref{manaf100}, we obtain that $T$ is a Lipschitz $\left(p,\mathfrak{m}^L\left(p;p\right)\right)-$summing map. \\
\end{proof}

\section{Concluding Remarks}
\begin{enumerate}
  \item From Concluding Remarks \ref{66} (point 7) and the inclusion result $$\Pi_{\left(p,\mathfrak{m}^L\left(s;s\right)\right)}^{L}(X,Y)\subset\Pi_{\left(p,\mathfrak{m}^L\left(s;q\right)\right)}^{L}(X,Y),$$ we conclude that $$\Pi_{\left(p,\mathfrak{m}^L\left(s;s\right)\right)}^{L}(X,Y)=\mathbb{L}(X,Y)=\Pi_{\left(p,\mathfrak{m}^L\left(s;q\right)\right)}^{L}(X,Y)$$ for every $1\leq s< p$ and $0<q\leq s$.
	\item We recall that for $0<p\leq s\leq\infty$, the index $s'\left(p\right)$ is determined by the equation 
$$\frac{1}{s'\left(p\right)}+\frac{1}{s}=\frac{1}{p}.$$
In order to simplify our notations we write $\tilde{r}=s'\left(p\right)$. In the linear case if $\frac{1}{\tilde{r}}+\frac{1}{s}=\frac{1}{p}\leq 1$, then $$\left\|\cdot\right\|_{\left(\mathfrak{m}\left(s;p\right),p\right)}\leq\left\|\cdot\right\|_{\left(\tilde{r},\mathfrak{m}\left(\tilde{r};\tilde{r}\right)\right)}$$ for every bounded linear operator in arbitrary Banach spaces, see \cite [Sec. 20]{P80}.

\ \ \ \! Now if we assume this condition is also true in the nonlinear case (Lipschitz) and apply it in Corollary  \ref{manaf100}, we obtain the following result.

\begin{thm}\label{manaf3}
Let $0<p\leq s\leq\infty$ and $\frac{1}{p}=\frac{1}{s}+\frac{1}{\tilde{r}}\leq 1$. If $S\in\Pi_{\left(\mathfrak{m}^L\left(s;p\right),p\right)}^{L}(X,Y)$; $T\in\Pi_{\left(s,\mathfrak{m}^L\left(s;s\right)\right)}^{L}(Y,Z)$ and $\pi_{\left(\mathfrak{m}^L\left(s;p\right),p\right)}^{L}(S)\leq\pi_{\left(\tilde{r},\mathfrak{m}^L\left(\tilde{r};\tilde{r}\right)\right)}^{L}(S)$, then $T\circ S\in\Pi_{\left(p,\mathfrak{m}^L\left(p;p\right)\right)}^{L}(X,Z)$. Moreover
$$\pi_{\left(p,\mathfrak{m}^L\left(p;p\right)\right)}^{L}(T\circ S)\leq\pi_{\left(s,\mathfrak{m}^L\left(s;s\right)\right)}^{L}(T)\cdot\pi_{\left(\tilde{r},\mathfrak{m}^L\left(\tilde{r};\tilde{r}\right)\right)}^{L}(S).$$
\end{thm}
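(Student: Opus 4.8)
The plan is to reduce the statement entirely to Corollary \ref{manaf100}, using the standing hypothesis to replace one factor on the right-hand side. First I would observe that, under the constraint $\frac{1}{p}=\frac{1}{s}+\frac{1}{\tilde{r}}$, the number $\tilde{r}$ is by definition the $p$-conjugate $s'\left(p\right)$ of $s$, so the notation introduced just before the statement is consistent and nothing new must be set up; the side condition $\frac{1}{p}\leq 1$ simply means $p\geq 1$, which together with $p\leq s$ places us in the regime $1\leq p\leq s\leq\infty$ in which the linear inequality $\left\|\cdot\right\|_{\left(\mathfrak{m}\left(s;p\right),p\right)}\leq\left\|\cdot\right\|_{\left(\tilde{r},\mathfrak{m}\left(\tilde{r};\tilde{r}\right)\right)}$ of A. Pietsch \cite[Sec. 20]{P80} is valid, motivating the hypothesis on $S$.

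Next I would invoke Corollary \ref{manaf100} directly. Since $0<p\leq s\leq\infty$, $S\in\Pi_{\left(\mathfrak{m}^L\left(s;p\right),p\right)}^{L}(X,Y)$ and $T\in\Pi_{\left(s,\mathfrak{m}^L\left(s;s\right)\right)}^{L}(Y,Z)$, the corollary yields $T\circ S\in\Pi_{\left(p,\mathfrak{m}^L\left(p;p\right)\right)}^{L}(X,Z)$ together with the estimate
$$\pi_{\left(p,\mathfrak{m}^L\left(p;p\right)\right)}^{L}(T\circ S)\leq\pi_{\left(s,\mathfrak{m}^L\left(s;s\right)\right)}^{L}(T)\cdot\pi_{\left(\mathfrak{m}^L\left(s;p\right),p\right)}^{L}(S).$$
This already establishes membership of $T\circ S$ in the required class. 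Finally I would substitute the hypothesis $\pi_{\left(\mathfrak{m}^L\left(s;p\right),p\right)}^{L}(S)\leq\pi_{\left(\tilde{r},\mathfrak{m}^L\left(\tilde{r};\tilde{r}\right)\right)}^{L}(S)$ into the right-hand side to obtain
$$\pi_{\left(p,\mathfrak{m}^L\left(p;p\right)\right)}^{L}(T\circ S)\leq\pi_{\left(s,\mathfrak{m}^L\left(s;s\right)\right)}^{L}(T)\cdot\pi_{\left(\tilde{r},\mathfrak{m}^L\left(\tilde{r};\tilde{r}\right)\right)}^{L}(S),$$
which is precisely the claimed bound.

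The deduction itself is immediate; the only genuinely delicate point lies outside it, namely the status of the hypothesis. The Lipschitz analogue of Pietsch's inequality is not proved in this paper, so it is carried as an assumption rather than derived. Removing it would require transporting the linear factorization argument behind \cite[Sec. 20]{P80} through the Lipschitz dual operator $S^{\#}\in\mathfrak{L}(Y^{\#},X^{\#})$, and that transport — reconciling the weak Lipschitz $\ell_p^{L,w}$-norms with the linear weak $\ell_p$-norms on $X^{\#}$ — is where the real work would be; as it stands the proof is the two-line composition argument above.
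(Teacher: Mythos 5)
Your proposal is correct and follows exactly the paper's route: the paper obtains Theorem \ref{manaf3} by applying Corollary \ref{manaf100} and then using the assumed inequality $\pi_{\left(\mathfrak{m}^L\left(s;p\right),p\right)}^{L}(S)\leq\pi_{\left(\tilde{r},\mathfrak{m}^L\left(\tilde{r};\tilde{r}\right)\right)}^{L}(S)$ (the nonlinear analogue of Pietsch's linear estimate) to replace the factor on the right-hand side. Your closing observation that this hypothesis is assumed rather than proved also matches the paper's own framing of the result as a sufficient condition for the Lipschitz composition formula.
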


\begin{Remark}
Theorem \ref{manaf3} gives a sufficient condition for a Lipschitz composition formula as in the linear case of A. Pietsch \cite{P67}.
\end{Remark}
 \item As before, we use the abbreviation $\pi_{p}^{L}(\cdot)=\pi_{\left(p,\mathfrak{m}^L\left(p;p\right)\right)}^{L}(\cdot)$. 

\ \ \ \! J. D. Farmer and W. B. Johnson \cite [Problem 1]{J09} asked, if the composition formula
\begin{equation}\label{buch}
\pi_{p}^{L}(T\circ S)\leq\pi_{r}^{L}(T)\cdot\pi_{s}^{L}(S).
\end{equation} 
in Theorem \ref{manaf3} is true for arbitrary Lipschitz $r-$summing maps $T$, Lipschitz $s-$summing maps $S$ and $\frac{1}{p}\leq(\frac{1}{r}+\frac{1}{s})\wedge 1$.

\ \ \ \! In the forthcoming paper \cite{H13}, we will provide an algorithm to compute the $\pi_{p}^{L}-$summing norm of maps between finite metric spaces exactly. With the help of this algorithm, we show that (\ref{buch}) is in general not true, in contrast to the situation for linear operators. Here we just state the example, details will be provided in \cite{H13}.

\begin{exampl}
We use three$-$point metric spaces $X=\left\{x_{0},\,x_{1},\,x_{2}\right\}$; $Y=\left\{y_{0},\,y_{1},\,y_{2}\right\}$ with 
$$d_{X}(x_{0},x_{1})=d_{X}(x_{1},x_{2})=d_{X}(x_{0},x_{2})=d_{Y}(y_{0},y_{1})=1$$ and 
$$d_{Y}(y_{1},y_{2})=d_{Y}(y_{0},y_{2})=2.$$
Let $S$ from $X$ into $Y$ be the map defined by $$Sx_{j}=y_{j};\  j=0,1,2.$$ 

\ \ \ \! Obviously, $S$ is a Lipschitz map with $Lip(S)=2$. Then the algorithm from \cite{H13} can be used to compute 
$$\pi_{1}^{L}(S)=\frac{5}{2},\  \pi_{2}^{L}(S)=\frac{3}{\sqrt{2}}\  and\  \pi_{2}^{L}(I_{Y})=\frac{\sqrt{11}}{2\cdot\sqrt{2}}.$$
Since $$\frac{5}{2}>\frac{3}{\sqrt{2}}\cdot\frac{\sqrt{11}}{2\cdot\sqrt{2}}$$ this is a counterexample to (\ref{buch}) in the case $r=s=2$, $p=1$ and $T=I_{Y}$.
\end{exampl}
\ \ \ \! In \cite{H13}, we elaborate on this example in detail and provide counterexamples for other values of $p$, $r$ and $s$.
We finish with an application of Corollary \ref{aaa} to estimate rather accurately $\left(\mathfrak{m}^L\left(s;p\right),r\right)-$summing norms for the map from the above example.

\begin{exampl}
If $X$, $Y$ and $S$ be defined in the above example, then Corollary \ref{aaa} gives
\begin{align}
\pi_{\left(\mathfrak{m}^L\left(s;p\right),2\right)}^{L}(S)&\leq\Big[\pi_{2}^{L}(S)\Big]^{\frac{p}{s'\left(p\right)}}\cdot Lip(S)^{\frac{p}{s}}\nonumber \\
&\leq\Big[\frac{3}{\sqrt{2}}\Big]^{\frac{p}{s'\left(p\right)}}\cdot 2^{\frac{p}{s}}. \nonumber 
\end{align}

In the special case $s=4$, $p=3$ we e.g. obtain $s'\left(p\right)=12$ and 
$$2=Lip(S)\leq\pi_{\left(\mathfrak{m}^L\left(4;3\right),2\right)}^{L}(S)\leq 2.029663590.$$
\end{exampl}
\end{enumerate}
\textbf{Acknowledgment}
The author thanks Aicke Hinrichs for his advice and encouragement during the preparation of this paper.


\end{document}